\newcommand{\spa}{X}
\newcommand{\reeb}{\mathrm{R}}
\newcommand{\man}{\mathrm{M}}
\newcommand{\sample}{\mathrm{X}}
\newcommand{\cluster}{\mathcal{C}}
\newcommand{\mes}{\mathrm{m}}
\newcommand{\dis}{\mathrm{d}}
\newcommand{\supp}{\mathrm{supp}}
\newcommand{\var}{\mathrm{Var}}
\newcommand{\wasser}{\mathrm{W}}
\newcommand{\gromov}{\mathrm{GW}}
\newcommand{\Reeb}{{\mathrm R}}
\newcommand{\critic}{{\mathrm {Crit}}}
\newcommand{\comp}{{\mathrm {C}}}
\newcommand{\appr}{{\mathrm {A}}}
\newcommand{\estim}{{\mathrm {E}}}
\newcommand{\haus}{{\mathrm {d_H}}}
\newcommand{\ball}{{\mathrm {B}}}
\newcommand{\converge}{\underset{n\rightarrow\infty}{\longrightarrow}}
\newcommand{\Ys}{Y}
\newcommand{\interv}{{\mathrm {I}}}
\newcommand{\mapper}{{\mathbb{M}_n}}
\newcommand{\simp}{\Delta}
\newcommand{\DD}{\Delta\!\!\!\!\Delta}
\newcommand{\dirac}{\delta}
\newcommand{\diracn}{\mathrm{m}_n}
\newcommand{\diam}{\mathrm{Diam}}
\newcommand{\covn}{\mathcal{N}}
\newcommand{\vol}{\mathrm{Vol}}
\newcommand{\ind}{\mathds{1}}
\newcommand{\coverwidth}{\mathcal{W}(r)}
\newcommand{\g}{\mathrm{g}}
\newcommand{\G}{\mathrm{G}}
\newcommand{\deter}{\mathrm{det}}
\newcommand{\tangent}{\mathrm{T}}
\newcommand{\ricci}{\mathrm{Ric}}
\newcommand{\spaceform}{\mathrm{S}}
\newcommand{\R}{\mathbb{R}}
\newcommand{\N}{\mathbb{N}}
\newcommand{\length}{\mathrm{length}}
\newcommand{\eps}{\varepsilon}
\newcommand{\preim}[2]{#1^{-1}(#2)}
\newcommand{\torus}{\mathbb{T}^2}
\newcommand{\embd}{h}
\newtheorem{hypothesis}{Assumption}
\newcommand{\mathieu}[1]{\noindent \textcolor{green}{\textsf{[M: #1]}}}
\title{Gromov-Wasserstein Bound between Reeb and Mapper Graphs}
\author[1]{Ziyad Oulhaj}
\author[2]{Mathieu Carrière}
\author[1]{Bertrand Michel}
\affil[1]{Nantes Université, École Centrale Nantes, Laboratoire de Mathématiques Jean Leray, CNRS UMR 6629, Nantes, France}
\affil[2]{DataShape,
Centre Inria d’Université Côte d’Azur, Sophia Antipolis, France}
\date{}
\begin{document}
\maketitle

\begin{abstract}
Since its introduction as a computable approximation of the Reeb graph, the Mapper graph has become one of the most popular tools from topological data analysis for performing data visualization and inference. However, finding an appropriate metric (that is, a tractable metric with theoretical guarantees) for comparing Reeb and Mapper graphs, in order to, e.g., quantify the rate of convergence of the Mapper graph to the Reeb graph, is a difficult problem. While several metrics have been proposed in the literature, none is able to incorporate measure information, when data points are sampled according to an underlying probability measure. The resulting Reeb and Mapper graphs are therefore purely deterministic and combinatorial, and substantial effort is thus required to ensure their statistical validity.

In this article, we handle this issue by treating Reeb and Mapper graphs as metric measure spaces. This allows us to use Gromov-Wasserstein metrics to compare these graphs directly in order to better incorporate the probability measures that data points are sampled from. Then, we describe the geometry that arises from this perspective, and we derive rates of convergence of the Mapper graph to the Reeb graph in this context. Finally, we showcase the usefulness of such metrics for Reeb and Mapper graphs in a few numerical experiments. 
\end{abstract}



\section{Introduction}

The Mapper algorithm is a popular method from topological data analysis (TDA) for data visualization and inference. It provides a synthetic representation of a given dataset based on the topological variations of a continuous function defined on the data, often referred to as {\em filter function}. This representation typically takes the form of a graph, which facilitates data visualization and exploration. Indeed, the topological features of the Mapper graph (connected components, branches, loops, etc) are representatives of the topological features of the dataset, and can be used to identify its structures and subpopulations of interest. See Figure~\ref{fig:mapper_example} for an illustation. As such, Mapper graphs have been successfully used in numerous applications, including, but not limited to, 3D meshes~\cite{wang2020exploration,rosen2018inferring}, single-cell sequencing~\cite{wang2018topological,zechel2014topographical},  machine learning~\cite{Bruel-Gabrielsson2018, Naitzat2018}, or neural network architectures~\cite{Mitra21,joseph2021topological}. 

More fundamentally, the Mapper graph can be seen as a discrete version of the Reeb graph, which is a topological quotient space obtained by identifying the connected components of the level sets of a filter function, see Figure~\ref{fig:reeb_graph_example} for an illustration. Since the introduction of the Mapper algorithm, several metrics have been proposed to compare the resulting Mapper graph with its target Reeb graph, and more generally to compare several Mapper graphs and Reeb graphs together. As both objects are intrinsically combinatorial when computed over smooth enough filter functions (such as Morse type functions), 
these metrics focus on the combinatorial and algebraic aspects of Reeb and Mapper graphs. This point of view is also reinforced by the fact that the Mapper was initially introduced and studied in a deterministic setting for finite metric spaces. 

A prominent example of such metrics is the one proposed in \cite{de2016categorified}, where Reeb graphs are studied from an algebraic point of view by considering them as cosheaves, i.e., covariant functors defined on the category of open intervals in $\R$ where morphisms are given by inclusions, that satisfy a specific \emph{gluing} property. This property is satisfied in the case of Reeb graphs, after considering Reeb graphs as functors that associate to every open interval $\interv$ the set of connected components of $\preim{f}{\interv}$, and to every inclusion $\interv\subseteq\mathrm{J}$ the induced inclusion of the corresponding connected components. The authors of \cite{munch_convergence_2016} then build on this point of view by proposing a similar categorified definition of the Mapper graph, as a functor on the category whose objects consist of the simplices of the nerve of an open cover with inclusions as morphisms and that, similarly to the Reeb graph, maps these objects to the set of connected components of the associated preimages of cover elements. In this context, the \emph{interleaving distance} between the Reeb graph and the Mapper graph, defined by the authors in \cite{de2016categorified}, is well-defined and upper bounded by the \emph{resolution} of the open cover, as proved in \cite{munch_convergence_2016}.

Another line of work was initiated by \cite{carriere2018structure}, in which pseudometrics between Reeb and Mapper graphs are provided that focus on the topological features of graphs, as characterized by \emph{extended persistent homology}. More precisely, by introducing an intermediate construction called the \emph{MultiNerve Mapper}, the authors are able to link the extended persistence diagram of the Mapper graph to a pruned version of the extended persistence diagram of the Reeb graph. This in turn induces a natural alternative to the standard \emph{bottleneck distance} between extended persistence diagrams, that allows to quantify how close the Reeb and Mapper graphs are, and how stable the Mapper graph is with respect to perturbations of filter functions and open covers. 

Other notable distances for Reeb and Mapper graphs include the edit distance~\cite{bauer_edit_2016, bauer_reeb_2021}, which quantifies the number of edit transformations (such as, e.g., vertices and edges addition or deletion) one has to make in order to go from one graph to the other, and the functional distortion distance~\cite{bauer_measuring_2014}, which measures how path lengths (w.r.t. filter function values) are distorted under the action of any continuous function that sends one graph onto the other. Both distances enjoy some theoretical guarantees: while the functional distortion distance is known to be stable, the edit distance has been proved to be universal~\cite{bauer_reeb_2021}. 

In practice, the Mapper graph comes in the form of a stochastic object, as it is usually built from a random set of data points sampled in a metric space according to some probability measure. It is thus natural to study the convergence of the Mapper graph to the Reeb graph with a {\em statistical} perspective, using the metrics mentioned above. For instance, an alternative Mapper construction called the \emph{enhanced Mapper} is given in~\cite{brown2021probabilistic}, in which it is proved to approximate the Reeb graph (up to the resolution of the cover) w.r.t. the interleaving distance with high probability, when computed on a large enough sample.  Based on the extended persistent homology approach developed in~\cite{carriere2018structure}, the authors in \cite{carriere2018statistical} provide rates of convergence of Mapper graphs to the Reeb graph in expected value using an appropriate variation of the bottleneck distance.  
Finally, the statistical convergence of the Mapper complex (in the case of multivariate and stochastic filters) has been proposed in  \cite{carriere2022statistical} using the Gromov-Hausdorff distance.

However, in all of these approaches, the target Reeb graph remains a fully deterministic object: although the sampling measure is taken into account in the construction of the Mapper graph or in the necessary hypotheses to ensure convergence, the Reeb graph does not come with a measure-oriented description. {\em In this article, we propose a new perspective on Mapper and Reeb graphs by considering these objects as metric measure spaces.} This approach is motivated by theoretical considerations - we want to enrich the mathematical descriptions of these objects - and also by practical applications of Mapper graphs, in which they are frequently visualized with node sizes corresponding to their respective masses, i.e., how many points they contain. 

\paragraph{Contributions.} In this article, we study the Reeb and Mapper graphs of Morse filter functions for data points sampled on probability measures supported on Riemannian manifolds. {\bf Our contributions are two-fold:}

\begin{itemize}
    \item {\bf We endow Reeb and Mapper graphs with metric measure space structures}, and then rigorously introduce the Gromov-Wassertein metric between these spaces. This point of view can be seen as the measure-aware version of the Gromov-Hausdorff metric proposed in \cite{carriere2022statistical}. Overall, this allows us to incorporate measure information in the computation of distances between Reeb and Mapper graphs, an information that is usually lost or hidden in other approaches, due to their combinatorial nature. 
    
    \item {\bf We study the convergence of the Mapper to the Reeb graph in this framework.} In our main result (Theorem~\ref{thm:main_result}), we provide an upper bound on the expectation of the $p$-Gromov-Wasserstein  distance (for any $p >0$) between the Mapper graph, chosen with an appropriate resolution, and the Reeb graph, under a sampling generative model. This bound is of the order of  $n^{-\frac{\nu}{d+\alpha}} $,
    where $\nu=\min\left\{\frac{1}{2},\frac{d}{p(d+1)}\right\}$ for any $\alpha >0$, and where $d$ is the metric dimension of the measure. This upper bound relies in part on results on the convergence of the empirical measure in mean Wasserstein distance in a Polish metric space~\cite{weed2019sharp}. 
\end{itemize}

\paragraph{Related work.} Since their introduction in \cite{memoli2011gromov} for object matching, Gromov-Wasserstein metrics have been widely used in many applications for comparing 
heterogeneous data, including, e.g., shape and graph matching \cite{memoli2009spectral,xu2019gromov,xu2019scalable,koehl2023computing}, which makes them particularly well-suited for comparing 
Reeb and Mapper graphs. 
While applications and computational aspects of the Gromov-Wasserstein metrics have been studied extensively, the statistical aspects have not been carefully studied until very recently \cite{han2023covariance,zhang2024gromov, rioux2024limit}. In particular, in \cite{zhang2024gromov},  the empirical quadratic Gromov-Wasserstein convergence rate over Euclidean spaces of different dimensions $d_x$ and $d_y$ is shown to be less than $n^{-2/ \max(\min(d_x,d_y),4)}$. Note that this result does not apply straightforwardly to the study of Reeb and Mapper graphs (which cannot be seen as Euclidean spaces), 
and we follow a different route for deriving our rates of convergence. 


A close yet different approach than ours is the one proposed in \cite{wangMeasureTheoreticReebGraphs2024} - indeed, in this work, the authors also 
aim at adapting Reeb graphs and spaces for metric measure space inputs.
However, a crucial difference with our method is in the treatment of the measure itself:
while the goal of ~\cite{wangMeasureTheoreticReebGraphs2024} is to 
produce a new measure-aware space 
and filter 
for the computation of Reeb graphs,
and to study their stability properties, our approach directly incorporates the measure in the Reeb and Mapper graphs by considering them as metric measure spaces.
Similarly, the authors of \cite{ruscitti2024improving} have proposed a modification of the Mapper algorithm to account for the data distribution in the filter space, in order to define better suited open covers, and then provide rates of convergence with respect to the bottleneck distance, in a similar vein than \cite{carriere2018statistical}. Again, our approach differs from this one in that the measure is not used for a better tuning of the Mapper parameters, but is rather directly incorporated in both Reeb and Mapper graphs, and in the distance between them. 

\paragraph{Summary.} Section~\ref{sec:background} provides some necessary background on Reeb and Mapper graphs, as well as some elements of Riemannian geometry and Morse theory. In Section~\ref{sec:reeb_mms}, we define a metric structure on the Reeb graph with the Hausdorff distance, and we study its induced topology. Similarly, we introduce a metric measure space structure for the Mapper graph in Section~\ref{sec:mapper_mms}. The main result of this article about the rates of convergence of Mapper graphs to Reeb graphs in terms of the Gromov-Wasserstein distance is given in Section~\ref
{sec:mapper}. Finally, in Section~\ref{sec:appl}, we provide some illustrations of the practical use of Gromov-Wasserstein metrics to compare Mapper graphs.

\section{Background}\label{sec:background}
\subsection{Reeb and Mapper graphs}
\label{subsec:reeb_mapper_def}

We start by introducing Reeb graphs defined on general topological spaces.
\begin{definition}{Reeb graph.}\\
    Let $\spa$ be a topological space and let $f\,:\,\spa\rightarrow \R$ be a continuous 
function called {\it filter function}. Let $\sim_{f}$ be the equivalence relation between two elements $x$ and $y$ in $\spa$ defined by: $x\sim_{f} y$ if and only if $x$ and $y$ are in the same connected component of $\preim{f}{\{v\}}$ for some $v$ in $f(X)$. 
 The Reeb graph $\Reeb_f(\spa)$ of  $\spa$ 
is then defined as the quotient space $\spa/\sim_{f}$.
\end{definition}

Figure \ref{fig:reeb_graph_example} provides an illustration of a Reeb graph computed on a torus with its height as filter function.

\begin{figure}[t]
    \centering
    \includegraphics[width=0.5\textwidth]{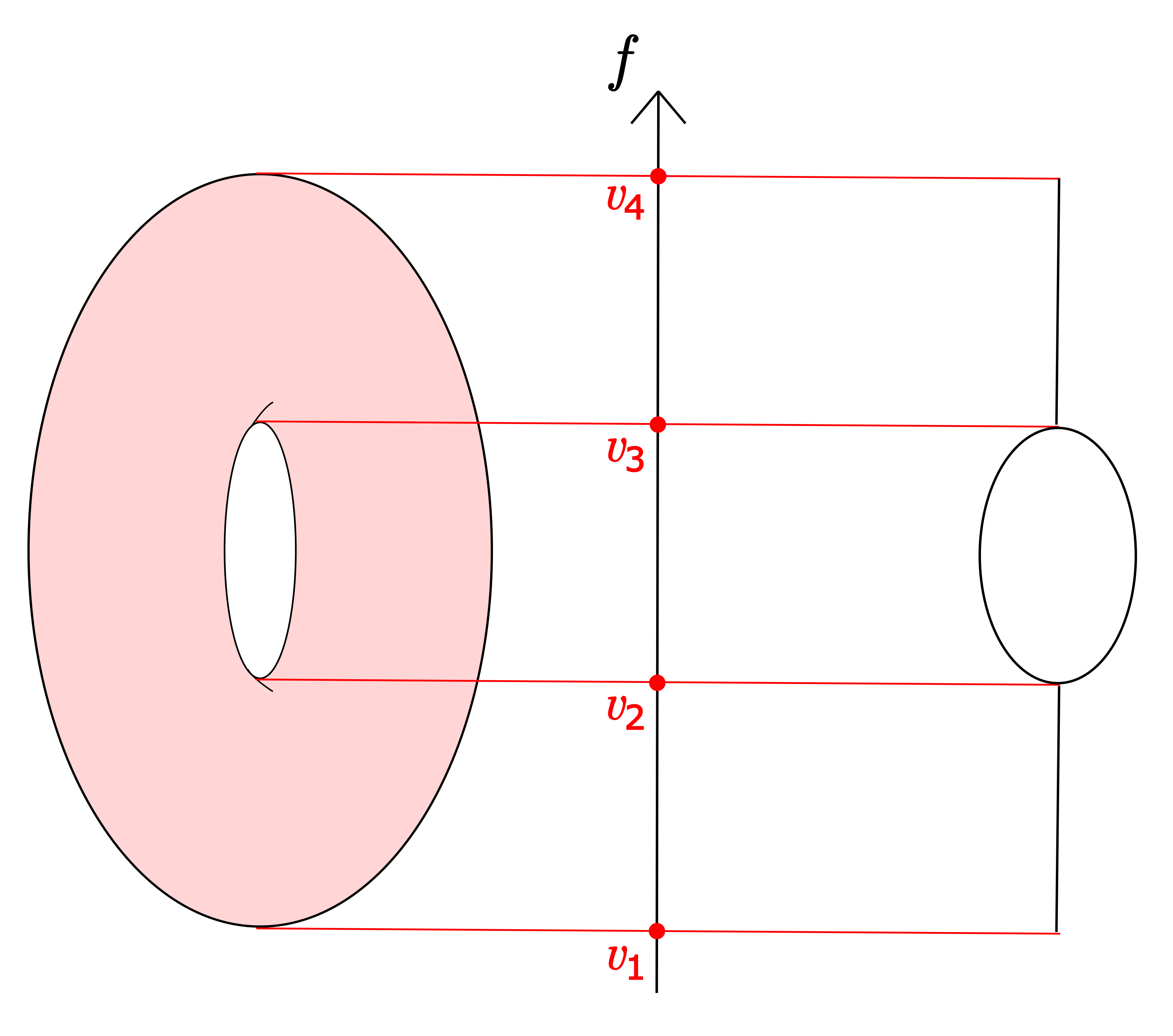}
    \caption{Example of a Reeb graph computed on a torus using height as filter function. The critical values $\{v_1,v_2,v_3,v_4\}$ of the filter function $f$ are represented in the middle. The Reeb graph is represented on the right.}
    \label{fig:reeb_graph_example}
\end{figure}

Assume now that $(\spa,\dis)$ is  a metric space and let $f\colon\spa\rightarrow\R$ be a continuous function. The Mapper was introduced in \cite{singh2007topological} as a discrete and computable version of the Reeb graph $\Reeb_f(\spa)$. 
Assume that we are given a point cloud 
$\sample_n= \{x_1,\dots,x_n\}\subseteq \spa$ with known pairwise dissimilarities, as well as a filter function $f$ 
defined on each point of  $\sample_n$. The Mapper graph can then be computed with the following algorithm:
\begin{enumerate}
\item Cover the range of values $\Ys_n = f(\sample_n)$ with a set of consecutive intervals $\interv_1,\dots, \interv_r$  that overlap, i.e., one has $\interv_i\cap \interv_{i+1}\neq \varnothing$ for all $1\leq i \leq r-1$.
\item Group the points that fall in the same connected component of each preimage $\preim{f}{\interv_j}$, $j\in\{1,...,r\}$. This defines a {\it pullback cover}
$\mathcal{C}=\{\cluster_{1,1},\dots,\cluster_{1,k_1},\dots,\cluster_{r,1},\dots,\cluster_{r,k_r}\}$ of 
$\sample_n$. 
\item The Mapper graph is defined as the {\it nerve} of $\mathcal{C}$. Each node $v_{j,k}$ of the Mapper graph corresponds to an element $\cluster_{j,k}$ of $\mathcal C$, 
and two nodes $v_{j,k}$ and $v_{j',k'}$ are connected by an edge if and only if  $\cluster_{j,k} \cap \cluster_{j',k'} \neq \varnothing$.
\end{enumerate} 

Figure~\ref{fig:mapper_example} provides a Mapper graph computed on points sampled from a torus with its height as filter function, and a cover of the filter image with three intervals. 

\begin{figure}[t]
    \centering
    \includegraphics[width=\textwidth]{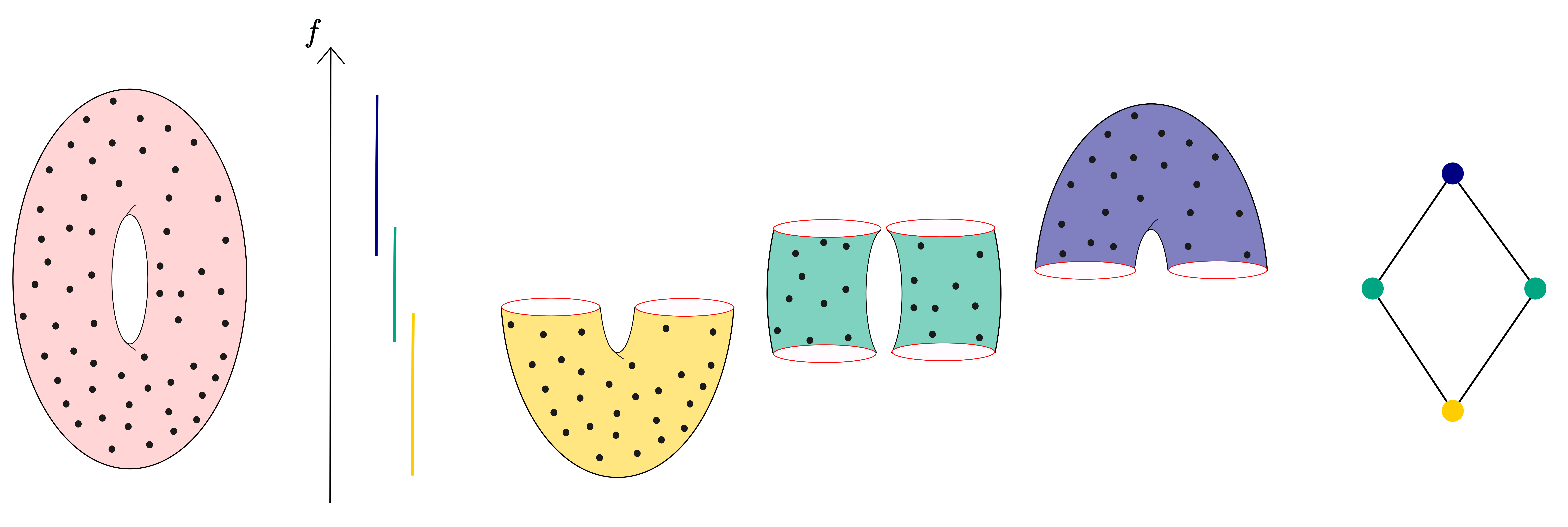}
    \caption{Example of a Mapper graph computed on points sampled from a torus with its height as filter function. The three intervals used for covering its image are represented in different colors.}
    \label{fig:mapper_example}
\end{figure}

In practice, the second step of the above algorithm is performed using a clustering algorithm, but we leave this consideration aside 
as it is more suitable for a theoretical discussion. Note that some clustering algorithms come with guarantees as to whether they are able to successfully estimate the connected components of the original space $\spa$. For example, in~\cite{niyogi2008finding}, Proposition~3.1. states that for points sampled from a Riemannian submanifold $\man$ of an Euclidean space, the union of Euclidean balls of radius $\eps$ centered on each sampled point for $\eps>0$ small enough has the same homology groups than $\man$, given that the sample is $\eps/2$ dense in $\man$ and under a geometric assumption on $\man$ (relating to its condition number). In particular, a clustering algorithm that consists in computing the connected components of an $\eps$-neighborhood graph of the sampled points will correctly estimate 
the right connected components of $\man$. Spectral clustering also comes with strong guarantees, see for instance~\cite{arias2011spectral}.

\subsection{Elements of Riemannian geometry}
\label{sec:elemRiem}
 
We introduce here some elements of Riemannian geometry that will be used throughout this work. Indeed, we will typically assume further in this article that we observe data sampled from a Riemannian manifold, as well as the values of a continuous filter function defined on the points. Although these are standard results, we recall some proof elements in Appendix~\ref{apdx:proofs} for the sake of completeness. A general presentation of the results provided in this section, and notably a proof of Theorem~\ref{thm:bishop_gromov}, can be found in~\cite{petersen2006riemannian}.

We call Riemannian manifold every $C^{\infty}$-manifold $\man$ together with a Riemannian metric $\g$. The Riemannian metric consists of an Euclidean inner product $\g_p$ on each of the tangent spaces $\tangent_p\man$ of $\man$, which satisfies that $p\mapsto\g_p(X_p,Y_p)$ is smooth whenever $X$ and $Y$ are two smooth vector fields defined on $\man$. 

Given some local coordinates $x(p)=(x_1,...,x_d)$ on an open set $U$ of $\man$, 
they induce a basis of the tangent space $\tangent_p\man$. 
We denote the associated {\it coordinate vector fields} as $(\partial_i)_{i=1}^d$, and their dual $1$-forms as $(dx_i)_{i=1}^d$.
Under these notations, we can write the Riemannian metric $\g$ as:
$$\g=\sum_{i,j}\g(\partial_i,\partial_j)\cdot dx_i\cdot dx_j,$$
where $dx_i\cdot dx_j$ is the bilinear form: $(v,w)\mapsto dx_i(v)\cdot dx_j(w).$
As such, $\g$ can be represented in local coordinates as a symmetric positive-definite matrix $\G(p)$ with entries parametrized over $U$.

We will denote
$$\lambda(p)=\min\{\sqrt{\rho},\,\rho\in\mathrm{Sp}(\G(p))\},$$
and
$$\mu(p)=\max\{\sqrt{\rho},\,\rho\in\mathrm{Sp}(\G(p))\},$$
where $\mathrm{Sp}(\G(p))$ is the spectrum of $\G(p)$. Notice that since the Riemannian metric is continuous, both $\lambda$ and $\mu$ are continuous as well.

The following proposition states that the Riemannian metric is bi-Lipschitz equivalent to the Euclidean metric in local coordinates.

\begin{proposition}\label{prop:g_equiv}
Let $U$ be an open set in $\man$, on which we are given coordinates $x(p)=(x_1,...,x_d)$. 
For every $p\in\man$ and for every $v\in\tangent_p\man$:

$$\lambda(p)\cdot \sqrt{\g_0(v,v)}\leq \sqrt{\g(v,v)}\leq \mu(p)\cdot \sqrt{\g_0(v,v)},$$

where $\g_0=\sum_{i}dx_i\cdot dx_i$ is the Euclidean metric in the local coordinates.
\end{proposition}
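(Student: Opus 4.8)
The plan is to reduce the statement to the standard Rayleigh quotient bounds for the symmetric positive-definite matrix $\G(p)$. First I would fix a point $p\in U$ and an arbitrary tangent vector $v\in\tangent_p\man$, expand it in the coordinate basis as $v=\sum_{i=1}^d v^i\,\partial_i$, and collect the coefficients into a column vector $\mathbf{v}=(v^1,\dots,v^d)^\top\in\R^d$. Using the local expression $\g=\sum_{i,j}\g(\partial_i,\partial_j)\cdot dx_i\cdot dx_j$ recalled above together with $dx_i(v)=v^i$, evaluating on the pair $(v,v)$ gives $\g(v,v)=\mathbf{v}^\top\,\G(p)\,\mathbf{v}$, the quadratic form associated with $\G(p)$. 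Likewise, since $\g_0=\sum_i dx_i\cdot dx_i$, one has $\g_0(v,v)=\sum_i (v^i)^2=\|\mathbf{v}\|^2$, the squared Euclidean norm of the coordinate vector.

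Next I would invoke the spectral theorem for the real symmetric matrix $\G(p)$: there is an orthonormal eigenbasis with eigenvalues forming the spectrum $\mathrm{Sp}(\G(p))$, all strictly positive because $\G(p)$ is positive-definite. The min-max (Rayleigh quotient) characterization of the extreme eigenvalues then yields, for every $\mathbf{v}\in\R^d$,
$$\rho_{\min}\,\|\mathbf{v}\|^2\leq \mathbf{v}^\top\,\G(p)\,\mathbf{v}\leq \rho_{\max}\,\|\mathbf{v}\|^2,$$
where $\rho_{\min}$ and $\rho_{\max}$ denote the smallest and largest elements of $\mathrm{Sp}(\G(p))$. Substituting the two identities from the previous step and taking square roots (all quantities being nonnegative) gives
$$\sqrt{\rho_{\min}}\cdot\sqrt{\g_0(v,v)}\leq \sqrt{\g(v,v)}\leq \sqrt{\rho_{\max}}\cdot\sqrt{\g_0(v,v)}.$$

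Finally I would match the constants with the definitions of $\lambda(p)$ and $\mu(p)$: since the square root is strictly increasing on $[0,\infty)$ and every element of the spectrum is positive, the minimum of $\sqrt{\rho}$ over $\rho\in\mathrm{Sp}(\G(p))$ is exactly $\sqrt{\rho_{\min}}=\lambda(p)$, and similarly the maximum is $\sqrt{\rho_{\max}}=\mu(p)$. This yields precisely the claimed inequalities, and since $v$ was arbitrary they hold for every tangent vector at each $p\in U$. I do not expect any genuine obstacle here: the result is a pointwise linear-algebra fact whose entire content lies in the Rayleigh quotient bounds. The only minor point worth stating carefully is the monotonicity argument identifying $\lambda(p)$ and $\mu(p)$ with the square roots of the extreme eigenvalues, so that the extrema of the square-rooted spectrum indeed correspond to the extreme eigenvalues themselves.
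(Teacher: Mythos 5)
Your proof is correct and takes essentially the same route as the paper: the paper's proof likewise sets $w=(dx_1(v),\dots,dx_d(v))$ (which equals your coefficient vector $\mathbf{v}$, since $dx_i(v)=v^i$), identifies $\g(v,v)=w^{T}\G(p)w$ and $\g_0(v,v)=w^{T}w$, and concludes from the extreme-eigenvalue bounds on the symmetric positive-definite matrix $\G(p)$. You simply make explicit the Rayleigh quotient step and the identification of $\lambda(p),\mu(p)$ with the square roots of the extreme eigenvalues, which the paper leaves implicit.
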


Riemannian manifolds can be given a metric space structure as it is possible to measure the ``length" of piecewise smooth curves using the Riemannian metric. This allows to define the {\it geodesic} distance $\dis$ on $\man$. 
The geodesic distance is locally bi-Lipschitz equivalent to the Euclidean distance in local coordinates.

\begin{proposition}\label{prop:dis_equiv}Let $p\in\man$. For every small enough neighborhood $U$ of $p$, we have that for every $q\in U$:
$$\lambda_0\cdot\dis_0(p,q)\leq \dis(p,q)\leq\mu_0\cdot\dis_0(p,q),$$
where $\dis_0(p,q)$ is the Euclidean distance between the representations of $p$ and $q$ in local coordinates and where
$$\lambda_0=\inf_{r\in U}\lambda(r) \textrm{ and }
\mu_0=\sup_{r\in U}\mu(r).$$
In particular $\lambda_0$ tends to $ \lambda(p)$ and
$\mu_0$  tends to  $\mu(p)$ as $\diam(U)$ tends to $0$.
\end{proposition}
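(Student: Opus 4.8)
Recall that the geodesic distance is $\dis(p,q)=\inf_{\gamma}\length_{\g}(\gamma)$, where the infimum runs over piecewise smooth curves $\gamma$ joining $p$ to $q$ in $\man$ and $\length_{\g}(\gamma)=\int\sqrt{\g(\gamma'(t),\gamma'(t))}\,dt$. I would take $U$ to be a \emph{coordinate ball} centered at $p$, i.e.\ $U=\{q:\dis_0(p,q)<\rho\}$ for $\rho$ small enough that $\overline{U}$ is contained in the coordinate domain; such a $U$ is convex in local coordinates, which is what makes the upper bound work. A preliminary remark that streamlines everything: since $\lambda$ and $\mu$ are continuous and $U$ is open, one has $\inf_{U}\lambda=\inf_{\overline U}\lambda=\lambda_0$ and $\sup_{U}\mu=\sup_{\overline U}\mu=\mu_0$ (any boundary value is a limit of interior values), so the pointwise inequalities of Proposition~\ref{prop:g_equiv} may be applied uniformly on all of $\overline U$ with the constants $\lambda_0,\mu_0$.

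\textbf{Upper bound.} For the inequality $\dis(p,q)\le \mu_0\,\dis_0(p,q)$, I would simply test the infimum against one explicit curve: the straight segment $\gamma$ from $x(p)$ to $x(q)$ in local coordinates, which stays in $U$ by convexity. Applying the right-hand inequality of Proposition~\ref{prop:g_equiv} along $\gamma$ gives
\[
\length_{\g}(\gamma)=\int_0^1\sqrt{\g(\gamma'(t),\gamma'(t))}\,dt\le \mu_0\int_0^1\sqrt{\g_0(\gamma'(t),\gamma'(t))}\,dt=\mu_0\,\dis_0(p,q),
\]
the last equality because the Euclidean length of a straight segment equals the Euclidean distance between its endpoints. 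Since $\dis(p,q)\le\length_{\g}(\gamma)$, the upper bound follows.

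\textbf{Lower bound and conclusion.} For $\lambda_0\,\dis_0(p,q)\le\dis(p,q)$ I would bound the length of an arbitrary competitor curve $\gamma$ from $p$ to $q$ from below, splitting into two cases. If $\gamma$ stays inside $U$, the left-hand inequality of Proposition~\ref{prop:g_equiv} gives $\length_{\g}(\gamma)\ge\lambda_0\,\length_{\g_0}(\gamma)\ge\lambda_0\,\dis_0(p,q)$, since the Euclidean length of any curve dominates the Euclidean distance of its endpoints. If instead $\gamma$ leaves $U$, let $t_0$ be the first time it reaches $\partial U$; the initial arc $\gamma|_{[0,t_0]}$ lies in $\overline U$ and joins $p$ to a point at Euclidean distance exactly $\rho$ from $p$, so $\length_{\g}(\gamma)\ge\lambda_0\,\length_{\g_0}(\gamma|_{[0,t_0]})\ge\lambda_0\,\rho>\lambda_0\,\dis_0(p,q)$, using $q\in U$ hence $\dis_0(p,q)<\rho$. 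In both cases $\length_{\g}(\gamma)\ge\lambda_0\,\dis_0(p,q)$, and taking the infimum over $\gamma$ yields the lower bound. Finally, the convergence $\lambda_0\to\lambda(p)$ and $\mu_0\to\mu(p)$ as $\diam(U)\to0$ is immediate from the continuity of $\lambda$ and $\mu$ together with $U\ni p$ shrinking to $\{p\}$. The only genuinely non-routine step is the second case of the lower bound: curves that escape $U$ could in principle be short if $\lambda$ degenerated outside $U$, and the point of centering $U$ at $p$ and using the first-exit time is precisely to force any escaping curve to accumulate Euclidean length at least $\rho$ \emph{before} leaving the region where the uniform constant $\lambda_0$ is valid, so that escaping can never beat the straight-line competitor.
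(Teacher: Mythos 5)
Your proof is correct and takes essentially the same route as the paper's own argument: the same choice of a small coordinate ball around $p$, the same straight-segment competitor for the upper bound, the same case split between curves that stay in $U$ and curves that leave it (with the first-exit time forcing Euclidean length at least the radius), and the same appeal to continuity of $\lambda$ and $\mu$ for the limit statement. The only differences (open versus closed coordinate ball, and your preliminary remark extending the constants $\lambda_0,\mu_0$ to $\overline{U}$) are cosmetic.
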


The Riemannian metric also induces a Borel measure $\vol$ called the {\it volume measure}. It can be first defined by specifying the expected value of a function $f\colon\man\rightarrow\mathbb{C}$ compactly supported on a single chart $\varphi\colon U\subseteq\man\rightarrow V\subseteq\mathbb{R}^d$:

$$\int_{\man}f\,d\vol=\int_{V}\left(f\cdot\sqrt{\deter(\G)}\right)\circ\varphi^{-1}\,d\lambda,$$

where \begin{align*}
\G\colon U&\longrightarrow\mathbb{R}^{d\times d}\\
p&\longmapsto(\g(\partial_{i_{|p}},\partial_{j_{|p}}))_{i,j}
\end{align*}
and $\lambda$ is the Lebesgue measure on $\mathbb{R}^d$.\\
It can be checked, with the help of the substitution rule, that the above definition does not depend on the choice of the coordinate neighborhood.\\
This definition can be extended to general compactly supported functions by using a partition of unity of a coordinate neighborhood cover of $\man$: we simply sum the expected values over the elements of the partition.\\
Subsequently, as the expected value of compactly supported functions is well defined, the Riesz representation theorem allows to consider the unique associated positive Borel measure on $\man$: this is the volume measure $\vol$.\\\\
 We have the following asymptotic equivalence result for the volume of small geodesic balls.
\begin{proposition}{Volume of small geodesic balls.}\label{prop:geo_vol}\\
Let $\ball(p,\eps)$ be the geodesic ball around $p\in\man$ of radius $\eps$. We have that:
$$\vol\left(\ball(p,\eps)\right)\underset{\eps\rightarrow 0}{\sim} \alpha_d\cdot\eps^d,$$
where $\alpha_d$ is the volume of the unit ball in $\R^d$.
%
\end{proposition}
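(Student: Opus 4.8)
The plan is to reduce the volume of the geodesic ball to a Euclidean volume computation in a well-chosen coordinate chart, using the bi-Lipschitz comparison between the geodesic and Euclidean distances (Proposition~\ref{prop:dis_equiv}) to sandwich the ball between Euclidean balls, and the continuity of the density factor $\sqrt{\deter(\G)}$ to control the volume integrand. The essential device is to choose local coordinates $\varphi$ centered at $p$ in which $\G(p)=I_d$ (e.g. geodesic normal coordinates, or any chart post-composed with $\G(p)^{-1/2}$); this renders the distance comparison isotropic at $p$, since then $\lambda(p)=\mu(p)=1$ and $\sqrt{\deter(\G(p))}=1$.

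First I would express the volume as a Lebesgue integral. Applying the definition of the volume measure to $f=\ind_{\ball(p,\eps)}$ gives
$$\vol(\ball(p,\eps))=\int_{\varphi(\ball(p,\eps))}\sqrt{\deter(\G)}\circ\varphi^{-1}\,d\lambda,$$
for $\eps$ small enough that $\ball(p,\eps)$ lies in the chart. Since $\sqrt{\deter(\G)}$ is continuous and equals $1$ at $p$, for any $\delta>0$ there is a neighborhood $U$ of $p$ on which $1-\delta\le\sqrt{\deter(\G)}\le 1+\delta$, so the integral is pinched between $(1-\delta)$ and $(1+\delta)$ times the Lebesgue measure of the coordinate image $\varphi(\ball(p,\eps))$.

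Next I would control that coordinate image via Proposition~\ref{prop:dis_equiv}. On a small enough $U$ one has $\lambda_0\,\dis_0\le\dis\le\mu_0\,\dis_0$, which translates into the inclusions $\ball_0(0,\eps/\mu_0)\subseteq\varphi(\ball(p,\eps))\subseteq\ball_0(0,\eps/\lambda_0)$ between Euclidean balls $\ball_0$ in coordinate space, valid once $\ball(p,\eps)\subseteq U$. Using that the Euclidean ball of radius $r$ has measure $\alpha_d r^d$, the image $\varphi(\ball(p,\eps))$ then has Lebesgue measure between $\alpha_d(\eps/\mu_0)^d$ and $\alpha_d(\eps/\lambda_0)^d$, so that combining with the previous step,
$$(1-\delta)\,\alpha_d\left(\frac{\eps}{\mu_0}\right)^d\le\vol(\ball(p,\eps))\le(1+\delta)\,\alpha_d\left(\frac{\eps}{\lambda_0}\right)^d.$$

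Finally I would let the chart shrink. Fixing $\eta>0$, I use the last statement of Proposition~\ref{prop:dis_equiv}, namely $\lambda_0\to\lambda(p)=1$ and $\mu_0\to\mu(p)=1$ as $\diam(U)\to 0$, together with $\delta\to 0$, to choose $U$ so small that $(1-\delta)/\mu_0^d\ge 1-\eta$ and $(1+\delta)/\lambda_0^d\le 1+\eta$; then for all sufficiently small $\eps$ the displayed bounds yield $(1-\eta)\alpha_d\eps^d\le\vol(\ball(p,\eps))\le(1+\eta)\alpha_d\eps^d$, which is exactly $\vol(\ball(p,\eps))/(\alpha_d\eps^d)\to 1$. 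The one point requiring care is the joint bookkeeping of the three families of constants $\lambda_0,\mu_0,\delta$ as $U$ is shrunk; the argument hinges on the isotropy $\lambda(p)=\mu(p)=1$ secured by the coordinate choice, without which the upper and lower Euclidean radii $\eps/\lambda_0$ and $\eps/\mu_0$ would fail to pinch to a common value and the clean constant $\alpha_d$ would not emerge.
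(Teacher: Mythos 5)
Your proof is correct and follows essentially the same route as the paper's: both choose a chart centered at $p$ with $\G(p)=I_d$ so that $\lambda(p)=\mu(p)=1$, sandwich the geodesic ball between Euclidean balls of radii $\eps/\mu_0$ and $\eps/\lambda_0$ via Proposition~\ref{prop:dis_equiv}, and pinch the volume density $\sqrt{\deter(\G)}$ near $1$ by continuity (the paper uses $\sqrt{c}$ and $\sqrt{C}$ where you use $1\pm\delta$, a cosmetic difference). Your explicit handling of the joint limit in $\lambda_0,\mu_0,\delta$ as $U$ shrinks is a fair tightening of a step the paper leaves implicit, but the argument is the same.
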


For the following result we will assume a lower bound on the {\it Ricci curvature} of $\man$. The Ricci curvature $\ricci$ is a symmetric bilinear form on the tangent spaces and can be thought of as the Laplacian of the metric $\g$, see for instance Chapter 9 in~\cite{petersen2006riemannian}. We will adopt the convention that $\ricci\geq k$ means that all eigenvalues $\rho$ of $\ricci$ satisfy $\rho\geq k$. 

The next theorem allows to compare the volume of geodesic balls in $\man$ to the volume of geodesic balls in certain model manifolds $\spaceform^d_k$ called {\it constant-curvature space forms}, $d$ being the dimension of both $\man$ and $\spaceform^d_k$ (see for example Subsection 9.1. in~\cite{petersen2006riemannian}).
Denote $v(d,k,r)$ as the volume of a ball of radius $r$ in $\spaceform^d_k$. 

\begin{theorem}{Bishop-Cheeger-Gromov Theorem, see for instance Lemma~36 in~\cite{petersen2006riemannian}.}\\
\label{thm:bishop_gromov}
Suppose that $\man$ is a $d$-dimensional complete Riemannian manifold such that $\ricci\geq(d-1)\cdot k$, for some $k \in \R$. Then  for any $p \in \man$, 
$$\eps\longmapsto\frac{\vol\left(\ball(p,\eps)\right)}{v(d,k,\eps)} $$
is a nonincreasing function whose limit is 1 as $\eps\rightarrow 0$. 
\end{theorem}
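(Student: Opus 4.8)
The plan is to pass to geodesic polar coordinates centered at $p$ and to compare the volume density on $\man$ against that of the model space form $\spaceform^d_k$, one geodesic direction at a time. Writing the volume measure as $d\vol = A(t,\theta)\,dt\,d\theta$, where $A(t,\theta)$ is the Jacobian of $\exp_p$ along the unit-speed geodesic with initial direction $\theta\in\tangent_p\man$ (extended by $A\equiv 0$ past the cut locus), the area of the geodesic sphere of radius $t$ is $a(t)=\int_{S^{d-1}}A(t,\theta)\,d\theta$ and $\vol(\ball(p,\eps))=\int_0^\eps a(t)\,dt$. In the space form the density $A_k(t)$ is independent of $\theta$ by isotropy, so the sphere area is $a_k(t)=\omega_{d-1}A_k(t)$ and $v(d,k,\eps)=\int_0^\eps a_k(t)\,dt$, where $\omega_{d-1}$ is the area of the unit sphere $S^{d-1}$. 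The entire statement then reduces to a monotonicity property of these densities.

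The analytic heart is the pointwise comparison $\partial_t\log A(t,\theta)\leq\partial_t\log A_k(t)$, valid for each fixed $\theta$ wherever $A>0$. Setting $m(t)=\partial_t\log A(t,\theta)$, which equals the trace $\mathrm{tr}\,\mathcal{S}(t)$ of the shape operator $\mathcal{S}(t)$ of the geodesic sphere, the Riccati equation $\mathcal{S}'+\mathcal{S}^2+\mathcal{R}_{\dot\gamma}=0$ (where $\mathcal{R}_{\dot\gamma}$ is the directional curvature operator, with $\mathrm{tr}\,\mathcal{R}_{\dot\gamma}=\ricci(\dot\gamma,\dot\gamma)$) gives, upon taking the trace, $m'(t)+\mathrm{tr}(\mathcal{S}(t)^2)+\ricci(\dot\gamma,\dot\gamma)=0$. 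Bounding $\mathrm{tr}(\mathcal{S}^2)\geq m^2/(d-1)$ by Cauchy--Schwarz (the shape operator acts on the $(d-1)$-dimensional tangent space of the sphere) and inserting the hypothesis $\ricci\geq(d-1)k$ yields the Riccati inequality $m'(t)\leq -m(t)^2/(d-1)-(d-1)k$, whereas the space-form quantity $m_k(t)=\partial_t\log A_k(t)$ satisfies the corresponding equality. Since both $m$ and $m_k$ behave like $(d-1)/t$ as $t\to 0$, they share the same initial singularity, and a standard Riccati comparison then gives $m(t)\leq m_k(t)$ for all admissible $t$; equivalently, $t\mapsto A(t,\theta)/A_k(t)$ is nonincreasing. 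Past the cut locus $A$ vanishes, so the ratio only decreases further and monotonicity is preserved.

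With the density comparison in hand, the rest is soft. Since $A_k$ does not depend on $\theta$, we have $a(t)/a_k(t)=\omega_{d-1}^{-1}\int_{S^{d-1}}A(t,\theta)/A_k(t)\,d\theta$, an average of nonincreasing functions of $t$, hence $t\mapsto a(t)/a_k(t)$ is itself nonincreasing. I would then apply the elementary lemma that if $a/a_k$ is nonincreasing with $a,a_k>0$, then $\eps\mapsto\left(\int_0^\eps a\right)\big/\left(\int_0^\eps a_k\right)$ is nonincreasing; this is precisely the claimed monotonicity of $\eps\mapsto\vol(\ball(p,\eps))/v(d,k,\eps)$. The limit at $0$ follows directly from Proposition~\ref{prop:geo_vol}: the numerator is asymptotic to $\alpha_d\eps^d$, and the same asymptotic holds for $v(d,k,\eps)$ in the space form, so the ratio tends to $1$.

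The main obstacle is the Riccati comparison of the second paragraph: both the rigorous derivation of the differential inequality from the Ricci bound (requiring the Jacobi-field and shape-operator formalism for geodesic spheres) and the comparison of $m$ with $m_k$ near the common singularity at $t=0$ demand care. Everything downstream---the passage from directional to integrated monotonicity and the limit computation---is elementary once the density comparison is established.
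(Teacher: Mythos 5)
The paper gives no proof of this theorem---it is quoted from the literature (Lemma~36 in~\cite{petersen2006riemannian}, as the citation indicates, with the limit claim handled by the remark invoking Proposition~\ref{prop:geo_vol} immediately after the statement)---and your sketch is precisely the canonical argument found in that reference: polar-coordinate volume density, traced Riccati comparison for the shape operator of geodesic spheres under the Ricci lower bound, averaging over directions, Gromov's ratio-of-integrals lemma, and the limit $1$ from the common asymptotic $\alpha_d\eps^d$. Your proposal is correct, modulo the standard technicalities you explicitly flag (the Riccati comparison at the initial singularity $t\to 0$, the convention $A\equiv 0$ past the cut locus, and, for $k>0$, the degenerate regime $\eps\geq\pi/\sqrt{k}$ where Bonnet--Myers makes both numerator and denominator constant), so it matches the cited proof in all essentials.
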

 
Note that in the above theorem, $v(d,k,\eps)$ is independent of the base point $p$. 
This is very convenient for providing bounds on the volume of geodesic balls in $\man$ that are uniform with respect to the choice of base points. Notice also that since $\spaceform^d_k$ is a $d-$dimensional Riemannian manifold, Proposition~\ref{prop:geo_vol} applies for $v(d,k,\eps)$, i.e.,
$$v(d,k,\eps)\underset{\eps\rightarrow 0}{\sim} \alpha_d\cdot \eps^d.$$

We now define the gradient of a smooth real valued function on a Riemannian manifold. 
Let $x(p)=(x_1,...,x_d)$ be some local coordinates on an open set $U$ of $\man$
with associated coordinate vector fields $(\partial_i)_{i=1}^d$.
Let $f:\man\rightarrow\R$ be a smooth function. We can define its differential $df$ as the $1$-form that satisfies:
$df(\partial_i)=\frac{\partial f}{\partial x_i}$, 
for all $1\leq i \leq d.$
The gradient $\nabla f$ of $f$ is then defined as the vector field satisfying:
$\g(\nabla f,v)=df(v),$
for all vector fields $v$. It can also be seen as the Riesz representative of $df$.
Given our local coordinates, we can express the gradient vectors in their associated coordinate vector field bases.
\begin{proposition}\label{prop:grad}
In the local coordinates associated to the open set $U$, we have that:
$$\nabla f=\sum_{i=1}^d a_i\cdot\partial_i,$$
where:
$$(a_1,...,a_d)=\left(\frac{\partial f}{\partial x_1},...,\frac{\partial f}{\partial x_d}\right)\cdot\left[\g(\partial_i,\partial_j)\right]_{i,j}^{-1}.$$
In particular, we have:
$$\frac{1}{\mu(p)}\cdot \sqrt{\sum_{i=1}^d\frac{\partial f(p)}{\partial x_i}^2}\leq \sqrt{\g\left(\nabla f(p),\nabla f(p)\right)}\leq \frac{1}{\lambda(p)}\cdot \sqrt{\sum_{i=1}^d\frac{\partial f(p)}{\partial x_i}^2}$$

\end{proposition}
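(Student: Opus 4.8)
The plan is to reduce both assertions to elementary linear algebra carried out in the fixed coordinate chart, where the Riemannian metric $\g$ is represented by the symmetric positive-definite matrix $\G(p) = [\g(\partial_i,\partial_j)]_{i,j}$.

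First I would establish the coordinate expression for the gradient. Writing $\nabla f = \sum_{i=1}^d a_i \cdot \partial_i$ and testing the defining identity $\g(\nabla f, v) = df(v)$ against each coordinate vector field $v = \partial_j$, I obtain
$$\sum_{i=1}^d a_i \cdot \g(\partial_i,\partial_j) = df(\partial_j) = \frac{\partial f}{\partial x_j},$$
for every $1 \le j \le d$. Collecting the $a_i$ into a row vector $a$ and the partial derivatives into the row vector $b = (\partial f/\partial x_1, \dots, \partial f/\partial x_d)$, this is exactly the linear system $a\,\G(p) = b$. Since $\G(p)$ is positive-definite, hence invertible, solving gives $a = b \cdot \G(p)^{-1}$, which is the claimed formula.

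Next I would compute the norm of the gradient. Using the coordinate expression and bilinearity,
$$\g(\nabla f, \nabla f) = \sum_{i,j} a_i a_j \,\g(\partial_i,\partial_j) = a\,\G(p)\,a^{\top}.$$
Substituting $a = b\,\G(p)^{-1}$ and using the symmetry of $\G(p)$ (so that $(\G(p)^{-1})^{\top} = \G(p)^{-1}$), the middle factor telescopes to
$$\g(\nabla f, \nabla f) = b\,\G(p)^{-1}\G(p)\G(p)^{-1}\, b^{\top} = b\,\G(p)^{-1}\, b^{\top}.$$
It then remains to bound this quadratic form by $\sum_i (\partial f/\partial x_i)^2 = b\, b^{\top}$. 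Since $\G(p)$ is symmetric positive-definite with spectrum contained in $[\lambda(p)^2, \mu(p)^2]$ (this is precisely how $\lambda(p)$ and $\mu(p)$ are defined, as the extreme square roots of eigenvalues of $\G(p)$), the inverse $\G(p)^{-1}$ is again symmetric positive-definite, with spectrum contained in $[\mu(p)^{-2}, \lambda(p)^{-2}]$. The Rayleigh quotient bounds for a symmetric positive-definite matrix then give
$$\frac{1}{\mu(p)^2}\, b\, b^{\top} \le b\,\G(p)^{-1}\, b^{\top} \le \frac{1}{\lambda(p)^2}\, b\, b^{\top},$$
and taking square roots yields exactly the stated two-sided inequality.

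Since every step is a routine manipulation, I do not expect a serious obstacle here; the only points requiring a little care are the bookkeeping of the row/column-vector conventions in the telescoping computation, and correctly translating the definitions of $\lambda(p)$ and $\mu(p)$ — which involve the square roots $\sqrt{\rho}$ of the eigenvalues $\rho$ of $\G(p)$ — into spectral bounds on $\G(p)^{-1}$ before invoking the Rayleigh quotient.
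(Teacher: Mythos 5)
Your proposal is correct and follows essentially the same route as the paper's proof: testing $\g(\nabla f,\cdot)=df(\cdot)$ against the coordinate fields to get $a\,\G(p)=b$, telescoping $a\,\G(p)\,a^{\top}$ to $b\,\G(p)^{-1}b^{\top}$, and bounding via the spectrum of $\G(p)$. The only difference is cosmetic — you make explicit the Rayleigh-quotient step that the paper leaves implicit in its final ``As such'' — so there is nothing to fix.
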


\subsection{Elements of Morse theory}
In the article, we study the convergence of Mapper graphs towards Reeb graphs under the assumption that the filter function is a Morse function. Note that this assumption is not restrictive as the set of Morse functions is a dense open subset of $C^{\infty}(\man)$, for $\man$ a compact manifold. The aim of this section is to recall the cylindrical structures of preimages of intervals (under a Morse filter function) around non-critical points. See for instance~\cite{milnor1963morse} for a comprehensive presentation of Morse theory.



Let $\man$ be a Riemannian manifold. Recall that a smooth map $f\,:\,\man\rightarrow \mathbb{R}$ is called a {\it Morse function} if its critical points are non-degenerate, i.e., the Hessian of $f$ at the points where its gradient vanishes is non-singular.  We first recall the standard Morse Lemma, which describes the behavior of a Morse function in the neighborhood of a critical point.  Let $\critic(f)$ denote the set of critical points of a Morse function $f\,:\,\man\rightarrow \R$ defined on $\man$. We will denote $\Vert\cdot\Vert^2:=\g(\cdot, \cdot)$.

\begin{lemma}{Morse Lemma.}\label{lem:morse}\\
Let $f\,:\,\man\rightarrow \mathbb{R}$ be a Morse function and $c\in\critic(f)$. Then there exists a chart $\varphi\,:\,U\subseteq\man\rightarrow \mathbb{R}^d$ containing $c$ such that for every $p\in U$:
    $$f(p)=f(c)-\sum_{j=1}^ix_j^2+\sum_{j=i+1}^dx_j^2,$$
    where $x=\varphi(p)$ and the integer $i$ depends only on the signature of the Hessian at the critical point $c$.
\end{lemma}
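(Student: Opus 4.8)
The plan is to reduce everything to a computation in a single coordinate chart and then to diagonalize the quadratic part of $f$ by an inductive ``completing the square'' argument, following the classical proof (see~\cite{milnor1963morse}). First I would fix an arbitrary chart around $c$ and use it to transport the problem to a neighborhood of the origin in $\R^d$; after composing with a translation in the source and a subtraction of the constant $f(c)$ in the target, I may assume that $c$ corresponds to $0$ and that $f(0)=0$. Writing $f$ for the function read in these coordinates, the hypothesis that $c\in\critic(f)$ means precisely that $\nabla f(0)=0$.

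Next I would apply Hadamard's lemma twice. Since $f(0)=0$, there are smooth functions $g_a$ with $f(x)=\sum_{a=1}^d x_a\, g_a(x)$ and $g_a(0)=\frac{\partial f}{\partial x_a}(0)=0$; applying the lemma again to each $g_a$ yields smooth $h_{ab}$ such that
$$f(x)=\sum_{a,b=1}^d x_a x_b\, h_{ab}(x).$$
After replacing $h_{ab}$ by $\tfrac12(h_{ab}+h_{ba})$ I may assume the matrix $H(x)=(h_{ab}(x))$ is symmetric, and a direct computation gives $h_{ab}(0)=\tfrac12\frac{\partial^2 f}{\partial x_a\partial x_b}(0)$, so that $H(0)$ is half the Hessian of $f$ at $c$ and is therefore nonsingular.

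The core of the argument is an induction that progressively diagonalizes $H$ by smooth changes of coordinates near $0$. Suppose that in suitable coordinates $f=\pm x_1^2\pm\cdots\pm x_{r-1}^2+\sum_{a,b\ge r}x_a x_b\, H_{ab}(x)$. Nonsingularity together with continuity lets me arrange, after a linear change among the last coordinates if necessary, that $H_{rr}(0)\neq 0$, so $H_{rr}$ keeps a constant sign on a smaller neighborhood. Completing the square in $x_r$ suggests the new coordinate $y_r=\sqrt{|H_{rr}(x)|}\,\bigl(x_r+\sum_{a>r}x_a H_{ar}(x)/H_{rr}(x)\bigr)$, with the other coordinates unchanged; the key point is to check, via the inverse function theorem, that $x\mapsto(x_1,\dots,x_{r-1},y_r,x_{r+1},\dots,x_d)$ is a genuine diffeomorphism of a neighborhood of $0$, after which $f$ picks up one more diagonal term $\pm y_r^2$ and the induction proceeds. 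Iterating $d$ times produces the stated normal form.

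Finally, the integer $i$ is the number of negative signs appearing, that is, the number of negative eigenvalues of $H(0)$, and by Sylvester's law of inertia this number is an invariant of the quadratic form $\tfrac12\,\mathrm{Hess}_c f$ — hence it depends only on the signature of the Hessian at $c$, as claimed. The main obstacle I anticipate is not the algebra but the analytic bookkeeping in the inductive step: one must verify that each substitution is smooth (which needs $H_{rr}$ to be nonvanishing, handled by the sign argument above) and that its differential at $0$ is invertible so the inverse function theorem applies, all while ensuring the working neighborhood can be shrunk consistently across the $d$ steps.
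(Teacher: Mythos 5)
Your proof is correct, and it is precisely the classical argument from Lemma~2.2 of~\cite{milnor1963morse}: Hadamard's lemma applied twice to write $f(x)=\sum_{a,b}x_ax_b\,h_{ab}(x)$ with symmetric $h_{ab}$ satisfying $h_{ab}(0)=\frac{1}{2}\frac{\partial^2 f}{\partial x_a\partial x_b}(0)$, an inductive diagonalization by completing the square with the inverse function theorem validating each coordinate substitution, and Sylvester's law of inertia identifying the number of minus signs with the index of the Hessian. The paper itself states the Morse Lemma without proof and defers exactly to this reference, so your argument coincides with the intended one; the only cosmetic step left implicit is the final permutation of coordinates putting the negative squares first, which is immediate.
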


In particular, the Morse Lemma implies that the critical points of a Morse function are isolated, and $\critic(f)$ is finite when $f$ is a Morse function defined on a compact manifold.
%
Next, we recall a major result that relates the topology of a manifold to the analytic properties of a Morse function defined on it. See Figure \ref{fig:reeb_cylindre} for an illustration.

\begin{lemma}{Gradient flow of a smooth function.}\label{lem:flow}\\
Let $f\,:\,\man\rightarrow \R$ be a smooth function defined on a compact manifold $\man$ and $a,b\in\R$ such that $a<b$.
If $\preim{f}{[a,b]}\cap \critic(f)=\varnothing$,
then there exists a diffeomorphism $\psi_{b-a}$ between $\preim{f}{\{a\}}$ and $\preim{f}{\{b\}}$.

\end{lemma}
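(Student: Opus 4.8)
The plan is to realize $\psi_{b-a}$ explicitly as the time-$(b-a)$ map of a rescaled gradient flow, which is the classical Morse-theoretic argument. First I would exploit compactness: since $\man$ is compact, $\preim{f}{[a,b]}$ is compact, and by hypothesis it is disjoint from $\critic(f)$, which is closed. Hence there is an open neighborhood $W$ with $\preim{f}{[a,b]}\subseteq W$ on which $f$ has no critical point, so the gradient $\nabla f$ (defined by $\g(\nabla f,\cdot)=df$) is nowhere vanishing on $W$. By Proposition~\ref{prop:grad}, $\nabla f(p)=0$ exactly when all partial derivatives of $f$ vanish at $p$, which confirms that $\nabla f\neq 0$ throughout $W$; moreover, by compactness, $\Vert\nabla f\Vert^2=\g(\nabla f,\nabla f)$ is bounded away from $0$ on a slightly smaller neighborhood, so the rescaling below is smooth.

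Next I would consider on $W$ the renormalized field $\nabla f/\Vert\nabla f\Vert^2$, which satisfies $df\!\left(\nabla f/\Vert\nabla f\Vert^2\right)=1$ by construction. Choosing a smooth bump function $\rho$ equal to $1$ on $\preim{f}{[a,b]}$ and compactly supported in $W$, I define the globally smooth vector field $V=\rho\cdot\nabla f/\Vert\nabla f\Vert^2$ on $\man$ (setting $V=0$ outside $W$). Since $\man$ is compact, $V$ is complete, so its flow $\phi_t$ is defined for all $t\in\R$. The key computation is that, along any flow line staying inside $\preim{f}{[a,b]}$, one has $\tfrac{d}{dt}f(\phi_t(p))=df(V)=1$, i.e. $f$ increases at unit speed. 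Consequently, starting from $p\in\preim{f}{\{a\}}$, the value $f(\phi_t(p))$ rises from $a$ and reaches $b$ at exactly $t=b-a$; during this interval it remains in $[a,b]$, hence in the region where $\rho\equiv 1$, which makes the unit-speed computation self-consistent (the flow line cannot escape $\{a\le f\le b\}$ earlier, since $f$ is strictly increasing there). I would then set $\psi_{b-a}:=\phi_{b-a}|_{\preim{f}{\{a\}}}$, which maps $\preim{f}{\{a\}}$ into $\preim{f}{\{b\}}$; its inverse is $\phi_{-(b-a)}|_{\preim{f}{\{b\}}}$, and both maps are smooth, so $\psi_{b-a}$ is a diffeomorphism depending only on $b-a$.

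The main subtlety lies not in any single computation but in the interaction between the cutoff, the flow, and completeness: one must verify that a trajectory emanating from $\preim{f}{\{a\}}$ stays within $\preim{f}{[a,b]}$ for all times $t\in[0,b-a]$ so that $\rho\equiv 1$ along it and $f$ genuinely increases at unit rate (this follows from strict monotonicity of $f$ along the flow and the nonvanishing of $V$ on that region), and that the global field $V$ is both smooth and complete (guaranteed respectively by the lower bound on $\Vert\nabla f\Vert$ on $\supp\rho$ and by compactness of $\man$). Once these points are secured, the diffeomorphism property is immediate from the group law $\phi_{b-a}\circ\phi_{-(b-a)}=\mathrm{id}$.
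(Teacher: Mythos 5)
Your proposal is correct and follows essentially the same route as the paper: both realize $\psi_{b-a}$ as the time-$(b-a)$ map of the flow of the rescaled gradient field $\nabla f/\Vert\nabla f\Vert^2$, cut off away from $\preim{f}{[a,b]}$, using the unit-speed identity $\frac{d}{dt}\,f(\psi_t(p))=1$ and the flow group law to produce the inverse. If anything, your version is slightly more careful than the paper's: the paper defines the cutoff $\rho$ by a sharp case distinction (hence discontinuous across $\preim{f}{\{a\}}$ and $\preim{f}{\{b\}}$) and delegates existence and uniqueness of the flow to Milnor, whereas your smooth bump function supported in a critical-point-free neighborhood $W$ makes the vector field globally smooth before invoking completeness on the compact manifold $\man$.
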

We provide the proof of this well-known result here, as we will make use of this gradient flow several times later in the article. 
\begin{proof}
Since $\preim{f}{[a,b]}$ does not contain critical points, the following function is well defined:

$$\rho\colon \left \{
\begin{aligned}
\man  &\longrightarrow \R\\
q &\longmapsto \begin{cases}
\frac{1}{\Vert\nabla f(q)\Vert^2}& \text{if } f(q)\in [a,b],\\
0& \text{otherwise.}
\end{cases}
\end{aligned}
\right.,
$$

Now, consider the vector field defined by $X_q=\rho(q)\cdot\nabla f(q)$ and the flow $(\psi_t)_{t\in\R}$ associated to $X_q$. In other words, $\psi_t$ is the solution of the differential equation: 
$$\frac{d\psi_t(p)}{dt}=X_{\psi_t(p)},\,\psi_0(p)=p.$$
For details on the existence and uniqueness of $\psi_t$, see Lemma~$2.4.$ of \cite{milnor1963morse}. The main assumption made is that $X_q$ vanishes outside a compact subset of $\man$, which is true in our case since $\man$ is compact and $\preim{f}{[a,b]}$ is closed. Note that $\forall t\in\R$, $\psi_t$ is a diffeomorphism. Also, $\psi_t\circ\psi_s=\psi_{t+s}$.

Notice that we constructed $\rho$ so that for every $p\in \preim{f}{[a,b]}$, with the notation $q=\psi_t(p)$, we have:
\begin{align*}
\frac{d(f\circ\psi_t)(p)}{dt}&=\g\left(\frac{d\psi_t(p)}{dt},\nabla f(q)\right)\\
&=\g\left(\frac{\nabla f(q)}{\Vert\nabla f(q)\Vert^2},\nabla f(q)\right)\\
&=1.
\end{align*}
Consequently, $f\circ\psi_t(p)\colon t\mapsto t+f(p)$, and therefore $$\psi_{b-a}\left(\preim{f}{\{a\}}\right)= \preim{f}{\{b\}},$$
and 
$$\psi_{a-b}\left(\preim{f}{\{b\}}\right)= \preim{f}{\{a\}}.$$
Finally, $\psi_{b-a}$ is a diffeomorphism between $\preim{f}{\{a\}}$ and $\preim{f}{\{b\}}$.

\end{proof}

This following result is proved implicitly in Theorem 3.1 in~\cite{milnor1963morse}. It is used for example in~\cite{de2016categorified} to show that Reeb graphs are an example of what the authors call {\it constructible spaces}. For the sake of completeness, we give here a full proof. See Figure~\ref{fig:reeb_cylindre}.

\begin{theorem}{Cylindrical shape around non-critical points.}\label{thm:morse}\\
Let $f\,:\,\man\rightarrow \R$ be a Morse function defined on a connected compact Riemannian manifold $\man$. Let also $x\in\man$ such that $x\not\in \critic(f)$, 
and let $v=f(x)$. 
For a small enough $\eps>0$ such that $\preim{f}{[v-\eps,v+\eps]}$ does not contain critical points, the following map is a homeomorphism:
\begin{align*}
\psi\colon \preim{f}{\{v\}}\times[-\eps,\eps] &\longrightarrow \preim{f}{[v-\eps,v+\eps]}  \\
(q,t)&\longmapsto \psi_{t}(q)
\end{align*}
\end{theorem}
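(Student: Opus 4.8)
The plan is to use the gradient flow $\psi_t$ constructed in Lemma~\ref{lem:flow} as the explicit homeomorphism, and to verify that it is a continuous bijection with continuous inverse from a compact space to a Hausdorff space. First I would invoke Lemma~\ref{lem:flow}: since $x\not\in\critic(f)$ and $\critic(f)$ is finite (hence closed, and isolated from $x$ by the Morse Lemma), I can choose $\eps>0$ small enough that $\preim{f}{[v-\eps,v+\eps]}$ contains no critical points. On this preimage the function $\rho(q)=\Vert\nabla f(q)\Vert^{-2}$ is well-defined and smooth, and the associated flow $\psi_t$ satisfies $f(\psi_t(q))=f(q)+t$ for $q\in\preim{f}{[v-\eps,v+\eps]}$, exactly as computed in the proof of Lemma~\ref{lem:flow}. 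This is the engine: starting from a point $q$ on the central level set $\preim{f}{\{v\}}$ and flowing for time $t\in[-\eps,\eps]$ moves its filter value linearly to $v+t\in[v-\eps,v+\eps]$, so $\psi$ genuinely maps into $\preim{f}{[v-\eps,v+\eps]}$.

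Next I would establish that $\psi$ is a bijection. For surjectivity, take any $p\in\preim{f}{[v-\eps,v+\eps]}$ and set $s=v-f(p)\in[-\eps,\eps]$; then $q:=\psi_s(p)$ satisfies $f(q)=f(p)+s=v$, so $q\in\preim{f}{\{v\}}$, and $\psi(q,-s)=\psi_{-s}(q)=\psi_{-s}\circ\psi_s(p)=\psi_0(p)=p$ using the group law $\psi_t\circ\psi_{s}=\psi_{t+s}$. For injectivity, suppose $\psi_t(q)=\psi_{t'}(q')$ with $q,q'\in\preim{f}{\{v\}}$. Applying $f$ gives $v+t=v+t'$, so $t=t'$; then applying $\psi_{-t}$ and the group law yields $q=q'$. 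Continuity of $\psi$ is inherited from smooth dependence of flows on initial conditions and time (standard ODE theory, as referenced via Lemma~2.4 of~\cite{milnor1963morse}).

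To upgrade this continuous bijection to a homeomorphism, I would argue compactness rather than differentiate the inverse directly. The domain $\preim{f}{\{v\}}\times[-\eps,\eps]$ is compact: $\preim{f}{\{v\}}$ is a closed subset of the compact manifold $\man$, hence compact, and $[-\eps,\eps]$ is compact. The codomain $\preim{f}{[v-\eps,v+\eps]}\subseteq\man$ is Hausdorff as a subspace of a manifold. A continuous bijection from a compact space to a Hausdorff space is automatically a homeomorphism, which closes the argument without any explicit inverse computation.

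The main obstacle I anticipate is not the topological bookkeeping but justifying the smooth structure cleanly at the level-set boundary and the choice of $\eps$. Specifically, one must be careful that the flow $\psi_t$ is defined on all of $\man$ (it is, since $X_q$ vanishes off the compact set $\preim{f}{[v-\eps,v+\eps]}$, giving global existence) and that for $q\in\preim{f}{\{v\}}$ the trajectory $t\mapsto\psi_t(q)$ stays inside $\preim{f}{[v-\eps,v+\eps]}$ for $|t|\le\eps$ so that the linear-growth identity $f(\psi_t(q))=f(q)+t$ actually applies throughout, rather than the trajectory exiting the good region where $\rho\neq 0$. A short continuity/connectedness argument on the interval of $t$ for which $f(\psi_t(q))\in[v-\eps,v+\eps]$ handles this, and the Morse Lemma guarantees such an $\eps$ exists by isolating $x$ from $\critic(f)$.
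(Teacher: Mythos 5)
Your proposal is correct, and it takes a genuinely different route from the paper at the decisive step. The paper builds the same map $\psi$ from the gradient flow of Lemma~\ref{lem:flow}, but it then argues continuity of $\psi$ via local Lipschitzness in each variable separately, and proves continuity of the explicit inverse $p \mapsto (\psi_{v-f(p)}(p), f(p)-v)$ by hand: for $p_n \to p$ it bounds $\dis(\psi_{v-f(p)}(p_n), \psi_{v-f(p_n)}(p_n)) \leq L \, |f(p)-f(p_n)|$ with $L = \sup_{q \in \preim{f}{[v-\eps,v+\eps]}} \Vert \nabla f(q) \Vert^{-1} < \infty$. You instead verify bijectivity explicitly (via the group law $\psi_t \circ \psi_s = \psi_{t+s}$ and the identity $f(\psi_t(q)) = f(q)+t$) and then invoke the classical fact that a continuous bijection from a compact space to a Hausdorff space is a homeomorphism, using compactness of $\preim{f}{\{v\}} \times [-\eps,\eps]$. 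This is shorter and arguably cleaner on two counts: first, the paper never spells out injectivity or surjectivity (it simply writes down $\psi^{-1}$), whereas you derive both; second, your appeal to joint continuity of flows from standard ODE theory is more solid than the paper's separate-variable Lipschitz claim, since separate continuity in $q$ and in $t$ does not by itself yield joint continuity without uniformity of the local Lipschitz constants. What the paper's more laborious route buys is quantitative information: the flow-length estimate $\dis(p,q) \leq L \, |f(p)-f(q)|$ extracted in its inverse-continuity argument is exactly the kind of bound reused later in the article (in the proofs of Lemma~\ref{lem:haus_cont}, Lemma~\ref{lem:critical} and Lemma~\ref{lem:delta}), whereas your compactness argument produces the homeomorphism with no modulus of continuity attached. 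Finally, your closing caution is well placed and is in fact a point the paper glosses over: one must check that for $q \in \preim{f}{\{v\}}$ the trajectory $t \mapsto \psi_t(q)$ remains in $\preim{f}{[v-\eps,v+\eps]}$ for $|t| \leq \eps$, since the vector field is cut off outside that region (as written in Lemma~\ref{lem:flow}, $\rho$ is even discontinuous across the boundary level sets, and the smooth-cutoff construction of Lemma~2.4 in~\cite{milnor1963morse} is what actually licenses the flow); your connectedness argument on the set of times $t$ with $f(\psi_t(q)) \in [v-\eps,v+\eps]$ is the right repair, because the filter value moves monotonically and so cannot exit the interval before time $\pm\eps$.
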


\begin{proof}
First, notice that an $\eps>0$ satisfying the assumption of the theorem always exists. By Lemma \ref{lem:morse}, $\critic(f)$ is finite. As such, there exists $\eps>0$ such that $\preim{f}{[v-\eps,v+\eps]}$ contains no critical points. Hence, let $\eps>0$ be such a positive real number. Then, for every $v'\in[v-\eps,v+\eps]$, Lemma~\ref{lem:flow} ensures that $\preim{f}{\{v\}}$ and $\preim{f}{\{v'\}}$ are diffeomorphic, under the map $\psi_{v'-v}$ corresponding to the gradient flow of $f$.\\
Define 
\begin{align*}
\psi\colon \preim{f}{\{v\}}\times[-\eps,\eps] &\longrightarrow \preim{f}{[v-\eps,v+\eps]}  \\
(q,t)&\longmapsto \psi_{t}(q)
\end{align*}

The function $\psi$ is continuous because it is locally Lipschitz: 
\begin{itemize}
    \item for a fixed $q\in \preim{f}{\{v\}}$: $t\mapsto\psi_{t}(q)$ is locally Lipschitz as it is continuously differentiable,
    \item for a fixed $t\in [-\eps,\eps]$: $q\mapsto\psi_{t}(q)$ is locally Lipschitz as it is a diffeomorphism.
\end{itemize}
The inverse of $\psi$ is given by $\psi^{-1}\colon p \longmapsto (\psi_{v-f(p)}(p),f(p)-v)$, which we now prove is continuous. Let $p\in \preim{f}{[v-\eps,v+\eps]}$ and $(p_n)_{n\in\N}$ a sequence in $\preim{f}{[v-\eps,v+\eps]}$ such that $p_n\converge p$.\\
We have:
$$\dis(\psi_{v-f(p)}(p),\psi_{v-f(p_n)}(p_n))\leq \dis(\psi_{v-f(p)}(p),\psi_{v-f(p)}(p_n))+\dis(\psi_{v-f(p)}(p_n),\psi_{v-f(p_n)}(p_n)).$$
Now, as mentioned above, the function $q\mapsto\psi_{v-f(p)}(q)$ is continuous because it is a diffeomorphism. Hence:
$$\dis(\psi_{v-f(p)}(p),\psi_{v-f(p)}(p_n))\converge 0.$$
Moreover,

\begin{align*}
 \dis(\psi_{v-f(p)}(p_n),\psi_{v-f(p_n)}(p_n))&\leq \left|\int_{v-f(p)}^{v-f(p_n)}\left\Vert\frac{d\psi_{u}(p_n)}{du}\right\Vert\,du \right| \\
 &\leq \left|\int_{v-f(p)}^{v-f(p_n)}\frac{1}{\Vert\nabla f(\psi_{u}(p_n))\Vert}\,du \right|.
\end{align*}

As $f$ is smooth and $\preim{f}{[v-\eps,v+\eps]}\cap\critic(f)=\varnothing$, and  denoting:
$$L:=\sup_{q\in \preim{f}{[v-\eps,v+\eps]}}\frac{1}{\Vert\nabla f(q)\Vert},$$
we have $L<\infty$. Therefore,
$$\dis(\psi_{v-f(p)}(p_n),\psi_{v-f(p_n)}(p_n)\leq L\cdot |f(p)-f(p_n)|\converge 0.$$

Finally, we showed that $$\dis(\psi_{v-f(p)}(p),\psi_{v-f(p_n)}(p_n))\converge 0,$$ and thus that $\psi^{-1}$ is continuous.
\end{proof}

\begin{figure}[t]
    \centering
    \includegraphics[width=0.7\textwidth]{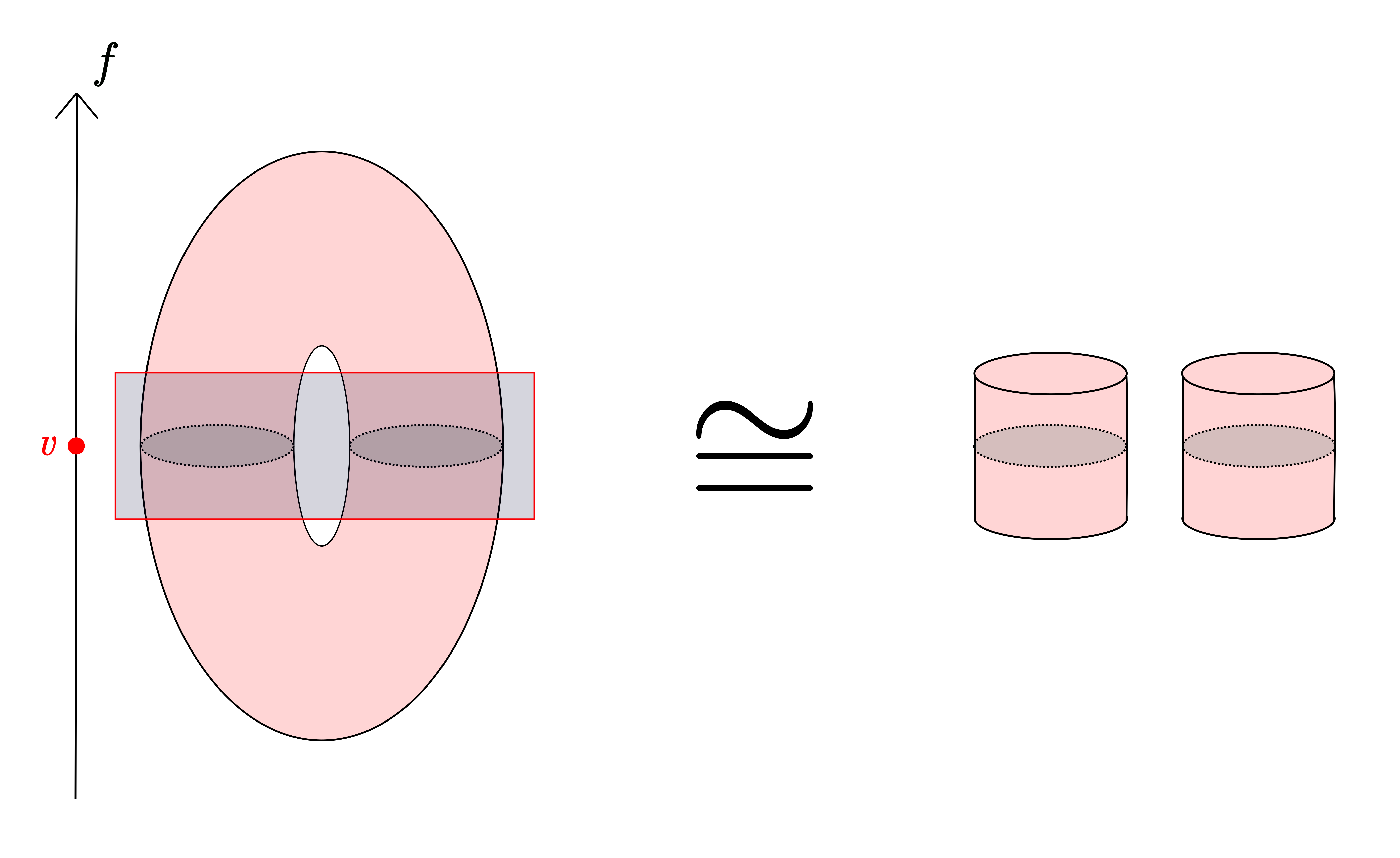}
    \caption{\label{fig:reeb_cylindre} Around a non-critical value $v$, the manifold is homeomorphic to a finite collection of cylinders, whose faces are given by the level set $\preim{f}{\{v\}}$.}
\end{figure}

In the context of studying Reeb graphs built on Morse functions, we will use the following Proposition.
\begin{proposition}\label{prop:local_path_con}
Let $f\,:\,\man\rightarrow \R$ be a Morse function defined on a connected compact Riemannian manifold $\man$. The level sets of $f$, $ \preim{f}{\{v\}}$ for $v\in f(\man)$, are locally path connected.
\end{proposition}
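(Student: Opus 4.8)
The plan is to verify local path connectedness pointwise and to exploit the two local normal forms already available in the excerpt: the cylinder structure of Theorem~\ref{thm:morse} near non-critical points, and the Morse Lemma~\ref{lem:morse} near critical points. Writing $L_v := \preim{f}{\{v\}}$, recall that local path connectedness is a purely local property: one must exhibit, for each $p\in L_v$ and each neighborhood $W$ of $p$ in $L_v$, a path-connected open neighborhood $V$ with $p\in V\subseteq W$. It therefore suffices to treat each point $p\in L_v$ separately, distinguishing regular from critical points.

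First I would handle a regular point $p\in L_v\setminus\critic(f)$. There $df_p\neq 0$, so $f$ is a submersion near $p$ and, by the implicit function theorem (equivalently, restricting the cylindrical homeomorphism $\psi$ of Theorem~\ref{thm:morse} to the slice $t=0$), $L_v$ is identified near $p$ with an open subset of $\R^{d-1}$. Euclidean space is locally path connected, so a neighborhood basis of path-connected open sets at $p$ is immediate, and this case is routine.

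The main work, and the main obstacle, is at a critical point $c\in L_v\cap\critic(f)$, where $L_v$ is genuinely singular. Here I would apply the Morse Lemma to obtain a chart $\varphi\colon U\to\R^d$ in which $f\circ\varphi^{-1}(x)=f(c)-\sum_{j=1}^i x_j^2+\sum_{j=i+1}^d x_j^2$; since $v=f(c)$, the homeomorphism $\varphi$ carries $L_v\cap U$ onto the quadric cone $C=\{x\in\R^d:\sum_{j=1}^i x_j^2=\sum_{j=i+1}^d x_j^2\}$, so it is enough to check local path connectedness of $C$ at the origin. When $i=0$ or $i=d$ (a local extremum) the cone degenerates to the single point $\{0\}$, which is trivially locally path connected. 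For $0<i<d$, the key observation is that $C$ is star-shaped with respect to the origin: the segment $t\mapsto tx$ preserves the defining equation, as both sides scale by $t^2$, and stays inside any ball $B(0,\delta)$ whenever $x$ does. Hence each $C\cap B(0,\delta)$ is path-connected, every point being joined to $0$ by such a segment, and these sets form a neighborhood basis of $0$ in $C$. Transporting back through $\varphi^{-1}$ produces the required path-connected open neighborhoods of $c$ in $L_v$.

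Combining the two cases establishes local path connectedness at every point of $L_v$, and hence of the whole level set. The only delicate point is recognizing that the singularity of $L_v$ at a critical point is exactly a quadric cone and that star-shapedness, rather than smoothness, is what furnishes path-connected neighborhood bases there; the regular-point case follows directly from the machinery already developed.
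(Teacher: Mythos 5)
Your proposal is correct and follows essentially the same route as the paper: the regular-point case via the Implicit Function Theorem, and the critical-point case via the Morse chart, where your star-shapedness observation (the segment $t\mapsto t\cdot x$ preserving the quadric equation) is exactly the paper's path $\gamma\colon t\mapsto\varphi^{-1}(t\cdot\varphi(p))$. Your explicit treatment of the degenerate cases $i=0$, $i=d$ and of the neighborhood basis $C\cap B(0,\delta)$ is slightly more careful than the paper's, but it is the same argument.
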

\begin{proof}
When $v\in f(\man)$ is not critical, the Implicit Function Theorem (see Theorem 5.8. of~\cite{boothby1986introduction} for example) shows that $\preim{f}{\{v\}}$ is a submanifold of $\man$ and as such is locally path connected.\\
Now, let $v\in f(\man)$ be a critical value. First, $\preim{f}{\{v\}}$ is locally path connected around critical points:\\
Let $c\in\preim{f}{\{v\}}$ be a critical point. By Lemma~\ref{lem:morse}, there exists an open neighborhood $U\subseteq\man$ of $c$ and a chart $\varphi\,:\,U\subseteq\man\rightarrow \mathbb{R}^d$ containing $c$ such that for every $p\in U$:
$$f(p)=f(c)-\sum_{j=1}^ix_j^2+\sum_{j=i+1}^dx_j^2,$$
where $x=\varphi(p)$ and the integer $i$ depends only on the signature of the Hessian at the critical point $c$.\\
Notice that the level set $\preim{f}{\{v\}}$ is given in local coordinates by 
$$\preim{f}{\{v\}}\cap U=\left\{\varphi^{-1}(x):\,\sum_{j=1}^ix_j^2=\sum_{j=i+1}^dx_j^2\right\},$$
and that $\varphi(c)=0$. For every $p\in\preim{f}{\{v\}}\cap U$, the path $\gamma\colon t\mapsto \varphi^{-1}(t\cdot\varphi(p))$ between $c$ and $p$ stays in $\preim{f}{\{v\}}\cap U$. This is because for every $t\in[0,1]$
\begin{align*}
\sum_{j=1}^i\varphi(\gamma(t))_j^2-\sum_{j=i+1}^d\varphi(\gamma(t))_j^2&=t^2\cdot\left(\sum_{j=1}^i\varphi(p)_j^2-\sum_{j=i+1}^d\varphi(p)_j^2\right)\\
&=0.
\end{align*}
Moreover, $\preim{f}{\{v\}}$ is locally path connected away from critical points. The set $\critic(f)$ is finite and hence closed. Therefore, $\man'=\man\setminus\critic(f)$ is an open submanifold of $\man$. Using the Implicit Function Theorem on the restriction of $f$ on $\man'$ as $v$ is no longer a critical value, $\preim{f}{\{v\}}\cap\man'$ is locally path connected.
\end{proof}

Proposition~\ref{prop:local_path_con} has two major applications:
\begin{itemize}
\item When looking at the level sets of a Morse function defined on a compact Riemannian manifold, connectedness and path connectedness are equivalent. Therefore, in what follows, we will use the term \emph{connected} to mean both.
\item The number of connected components of a level set of a  Morse function defined on a compact Riemannian manifold is always finite, as a level set is always compact and locally path connected.
\end{itemize}

\subsection{Geometry of Metric Measure Spaces}
In this section, we present the framework of metric measure space geometry introduced in~\cite{sturm2006geometry}. We first recall that 
connected compact Riemannian manifolds fall under this framework, and then we introduce Gromov-Wasserstein metrics between metric measure spaces.

\subsubsection{Metric Measure Spaces}

A {\it metric measure space} (or mm-space for short) is a triple $(\spa,\dis,\mes)$ where $(\spa,\dis)$ is a Polish metric space (i.e., complete and separable)
and where $\mes$ is a measure on the Borel $\sigma$-algebra of $(\spa,\dis)$ 
    that is locally finite.\footnote{By locally finite, we mean that $\forall x\in \spa,\,\exists r>0$ such that $\mes(\ball(x,r))<\infty$.}
The support of $\mes$
is denoted as $\supp(\mes)$.

\begin{proposition}
    A connected compact Riemannian manifold $(\man,\g)$ together with its geodesic distance $\dis$ and the volume measure $(\man,\dis,\vol)$ is a mm-space. 
\end{proposition}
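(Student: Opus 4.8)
The plan is to verify the two defining properties of an mm-space in turn: first that $(\man,\dis)$ is a Polish metric space, and then that $\vol$ is a locally finite Borel measure. I would begin by checking that the geodesic distance $\dis$ is genuinely a metric. Since $\man$ is a connected manifold it is path connected, so any two points are joined by a piecewise-smooth curve of finite length, whence $\dis(p,q)<\infty$ for all $p,q$. Symmetry and the triangle inequality are immediate from the definition of $\dis$ as an infimum of lengths of connecting curves, and non-negativity is clear. The only delicate axiom is the identity of indiscernibles, i.e. that $p\neq q$ forces $\dis(p,q)>0$; this I would obtain from Proposition~\ref{prop:dis_equiv}, which bounds $\dis$ below by a positive multiple of the local coordinate Euclidean distance on a small chart around $p$, so that any curve leaving $p$ towards a distinct point has positive length.

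The key step is to show that the topology induced by $\dis$ coincides with the ambient manifold topology on $\man$. I would again invoke the local bi-Lipschitz equivalence of Proposition~\ref{prop:dis_equiv}: on a small enough chart neighborhood $U$ of each point, $\dis$ and the coordinate Euclidean distance $\dis_0$ are comparable up to the positive constants $\lambda_0,\mu_0$, so they generate the same local topology, and therefore the two topologies agree globally. This identification is the crux of the argument, since it is what lets me transfer the compactness of $\man$ — a priori a property of the manifold topology — to the metric space $(\man,\dis)$.

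Once the topologies are identified, the Polish property follows from compactness: a compact metric space is complete (a Cauchy sequence admits a convergent subsequence and hence converges) and separable (a compact metric space is totally bounded, so admits a countable dense subset). Thus $(\man,\dis)$ is a Polish metric space.

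Finally, $\vol$ is a Borel measure by construction, being precisely the positive Borel measure furnished by the Riesz representation theorem in Section~\ref{sec:elemRiem}. For local finiteness, compactness again does the work: covering $\man$ by finitely many coordinate charts, on each of which $\vol$ is the pushforward of $\sqrt{\deter(\G)}$ times Lebesgue measure of a relatively compact set, shows $\vol(\man)<\infty$, so in particular every geodesic ball has finite volume, consistently with Proposition~\ref{prop:geo_vol}. Hence $\vol$ is locally finite and $(\man,\dis,\vol)$ is an mm-space. The main obstacle I anticipate is the topology identification in the second step; all remaining verifications are routine consequences of compactness.
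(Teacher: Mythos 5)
Your proof is correct and follows essentially the same route as the paper: compactness of $\man$ yields completeness and local finiteness of $\vol$, and separability follows from compactness (the paper invokes second-countability instead of total boundedness, a negligible difference). The extra work you do --- verifying the metric axioms for the geodesic distance via Proposition~\ref{prop:dis_equiv} and identifying the metric topology with the manifold topology --- is detail the paper treats as standard and omits, but it is sound and does not change the argument.
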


\begin{proof}
Since $\man$ is connected and compact, 
it is a complete metric space. It is also separable since 
it is second-countable. $\man$ being compact, the volume measure
$\vol$ is locally finite.
\end{proof}

Let $(\spa_1,\dis_1,\mes_1)$ and $(\spa_2,\dis_2,\mes_2)$ be two mm-spaces. Recall that a map $\Phi\colon\supp(\mes_1)\rightarrow\supp(\mes_2)$ is an isomorphism of mm-spaces between $(\spa_1,\dis_1,\mes_1)$ and $(\spa_2,\dis_2,\mes_2)$ if $\Phi$ is an isometry of metric spaces
    and if $\mes_2$ is the pushforward measure of $\mes_1$ under the map $\Phi$.
As isomorphisms induce an equivalence relation of mm-spaces, we denote the isomorphism equivalence class of a mm-space $(\spa,\dis,\mes)$ as $[\spa,\dis,\mes]$.

Metric measure space isomorphisms also induce the notion of {\it variance} of a mm-space $(\spa,\dis,\mes)$, defined as:
$$\var(\spa,\dis,\mes)=\inf_{\substack{(\spa',\dis',\mes')\in [\spa,\dis,\mes]\\ z\in \spa'}}\int_{\spa'}\dis'(x,z)^2\,d\mes'(x).$$
If $(\spa,\dis,\mes)$ is a compact mm-space then clearly one has $\inf_{z\in \spa}\int_{\spa}\dis(x,z)^2\,d\mes(x) < \infty $ and then $\var(\spa,\dis,\mes)$ is finite. Moreover, the variance is by definition an invariant of mm-space isomorphisms.


\subsubsection{Gromov-Wasserstein Distance}

\begin{definition}{Wasserstein distance.}\\
Let $(\spa,\dis)$ be a complete and separable metric space. Let $\mes_1$ and $\mes_2$ be two measures on the Borel $\sigma$-algebra of $(\spa,\dis)$. For $p \geq 1$, the {\it $p$-Wasserstein distance} between $\mes_1$ and $\mes_2$ is defined as:
$$\wasser_p(\mes_1,\mes_2)=\left(\inf_{\mes}\int_{\spa\times\spa} \dis(x,y)^p\,d\mes(x,y)\right)^{\frac{1}{p}},$$
where the infimum is taken over all measures $\mes$ on $\spa\times \spa$ that have marginals $\mes_1$ and $\mes_2$.
\end{definition}

\begin{definition}{Metric coupling.}\\
Let $(\spa_1,\dis_1,\mes_1)$ and $(\spa_2,\dis_2,\mes_2)$ be two mm-spaces.
A {\it metric coupling} of $\dis_1$ and $\dis_2$ is any pseudometric\footnote{A pseudometric can be zero for non-equal points.} $\dis$ on $\spa_1\sqcup \spa_2$ that satisfies:
$$\forall x,y\in \supp(\mes_1):\,\dis(x,y)=\dis_1(x,y),$$
$$\forall x,y\in \supp(\mes_2):\,\dis(x,y)=\dis_2(x,y).$$
\end{definition}

\begin{definition}{Gromov-Wasserstein distance.}\\
 For $p \geq 1$, the  {\it $p$-Gromov-Wasserstein distance} between two mm-spaces $(\spa_1,\dis_1,\mes_1)$ and $(\spa_2,\dis_2,\mes_2)$ is defined as:
$$\gromov_p((\spa_1,\dis_1,\mes_1),(\spa_2,\dis_2,\mes_2))=\left(\inf_{\dis,\mes}\int_{\spa_1\times \spa_2} \dis(x,y)^p\,d\mes(x,y)\right)^{\frac{1}{p}},$$
where the infimum is taken over all measures $\mes$ on $\spa_1\times \spa_2$ that have marginals $\mes_1$ and $\mes_2$, and over all metric couplings $\dis$ of $\dis_1$ and $\dis_2$.
\end{definition}

    For the sake of simplicity, we will denote $\gromov_p((\spa_1,\dis_1,\mes_1),(\spa_2,\dis_2,\mes_2))$ simply as $\gromov_p(\spa_1,\spa_2)$ when there is no ambiguity over the metric and the measure associated to each space.

\begin{proposition}\label{prop:gw_def}
The $p$-Gromov-Wasserstein distance between two mm-spaces, $(\spa_1,\dis_1,\mes_1)$ and $(\spa_2,\dis_2,\mes_2)$, can be equivalently defined as:
$$\gromov_p((\spa_1,\dis_1,\mes_1),(\spa_2,\dis_2,\mes_2))=\inf_{\varphi_1,\varphi_2}\wasser_p(\mes_1\circ\varphi_1^{-1},\mes_2\circ\varphi_2^{-1}),$$
where the infimum is taken over all isometric embeddings $\varphi_1\colon\supp(\mes_1)\rightarrow \spa$ and $\varphi_2\colon\supp(\mes_2)\rightarrow \spa$ of the supports of $\mes_1$ and $\mes_2$ into a common metric space $(\spa,\dis)$.
\end{proposition}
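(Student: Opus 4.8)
The plan is to prove the two inequalities separately. Both sides are infima of the same type of integral cost, so I will convert any competitor for one formulation into a competitor for the other with the same (or an arbitrarily close) cost.

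For the inequality ``embedding $\leq$ coupling'', I would start from any metric coupling $\dis$ of $\dis_1,\dis_2$ on $\spa_1\sqcup\spa_2$ and any measure $\mes$ on $\spa_1\times\spa_2$ with marginals $\mes_1,\mes_2$. Since $\dis$ is only a pseudometric, I pass to the quotient $Z:=(\spa_1\sqcup\spa_2)/\!\sim$, where $a\sim b\iff\dis(a,b)=0$, on which $\dis$ descends to a genuine metric; completing $Z$ if needed makes it Polish (the supports are separable, so the quotient is too). Let $q_1,q_2$ denote the canonical maps $\spa_i\to Z$. Because $\dis$ restricts to the genuine metrics $\dis_1,\dis_2$ on the supports, the restrictions $q_1|_{\supp(\mes_1)}$ and $q_2|_{\supp(\mes_2)}$ are isometric embeddings. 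Pushing $\mes$ forward by $(q_1,q_2)$ yields a coupling $\gamma$ of $\mes_1\circ q_1^{-1}$ and $\mes_2\circ q_2^{-1}$ on $Z\times Z$, and since $\dis_Z(q_1(x),q_2(y))=\dis(x,y)$ the transport cost of $\gamma$ equals $\int\dis(x,y)^p\,d\mes(x,y)$. Hence $\wasser_p(\mes_1\circ q_1^{-1},\mes_2\circ q_2^{-1})^p$ is at most this integral, and infimizing over $(\dis,\mes)$ gives the claimed inequality.

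For the reverse inequality ``coupling $\leq$ embedding'', I would fix isometric embeddings $\varphi_1\colon\supp(\mes_1)\to\spa$ and $\varphi_2\colon\supp(\mes_2)\to\spa$ into a common metric space $(\spa,\dis)$, which we may assume Polish after taking closures and completing. I then glue a pseudometric $\bar{\dis}$ on $\spa_1\sqcup\spa_2$, keeping $\dis_1$ and $\dis_2$ inside each factor and setting, for $x\in\spa_1$ and $y\in\spa_2$,
$$\bar{\dis}(x,y)=\inf_{a\in\supp(\mes_1),\,b\in\supp(\mes_2)}\left[\dis_1(x,a)+\dis(\varphi_1(a),\varphi_2(b))+\dis_2(b,y)\right],$$
and symmetrically when the roles are exchanged. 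The key computation, using that $\varphi_1,\varphi_2$ are isometries together with the triangle inequality in $\spa$, is that for $x\in\supp(\mes_1)$ and $y\in\supp(\mes_2)$ the infimum is attained at $a=x,\,b=y$, so that $\bar{\dis}(x,y)=\dis(\varphi_1(x),\varphi_2(y))$; in particular $\bar{\dis}$ agrees with $\dis_1,\dis_2$ on the supports and is a legitimate metric coupling once its triangle inequality is verified. Then, taking a coupling $\gamma$ on $\spa\times\spa$ whose cost is within $\eta$ of $\wasser_p(\mes_1\circ\varphi_1^{-1},\mes_2\circ\varphi_2^{-1})$, and noting that its marginals live on $\varphi_1(\supp(\mes_1))$ and $\varphi_2(\supp(\mes_2))$ while the $\varphi_i$ are injective, I pull $\gamma$ back through $(\varphi_1,\varphi_2)^{-1}$ to a coupling $\mes$ of $\mes_1,\mes_2$ on $\spa_1\times\spa_2$ satisfying $\int\bar{\dis}(x,y)^p\,d\mes=\int_{\spa\times\spa}\dis^p\,d\gamma$. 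Letting $\eta\to 0$ and infimizing over embeddings completes the inequality.

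The main obstacle is this second direction: checking that the glued $\bar{\dis}$ is a genuine pseudometric, i.e.\ that the cross-term definition satisfies the triangle inequality across all three case combinations, and confirming that the infimum defining the cross distances collapses to $\dis(\varphi_1(x),\varphi_2(y))$ on the supports, which is exactly where the isometry hypothesis is used. The remaining technical points, namely separability and completeness of the ambient space so that $\wasser_p$ is well defined, Borel-measurability of the pseudometrics so that the integrals make sense, and the fact that pushforward and pullback of couplings preserve marginals, are routine but should be stated carefully.
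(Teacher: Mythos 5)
Your proof is correct and follows essentially the same route as the paper's source: the paper does not prove this proposition in-text but defers to Sturm's \emph{On the geometry of metric measure spaces} (cited as the reference for Propositions 2 and 3), where the argument is exactly your two-step scheme --- quotient the coupled pseudometric space $(\spa_1\sqcup\spa_2,\dis)$ by zero-distance pairs and complete to manufacture the common Polish space, and conversely glue a pseudometric along the embeddings for the reverse inequality. Your explicit gluing formula, together with the verification that it collapses to $\dis(\varphi_1(x),\varphi_2(y))$ on $\supp(\mes_1)\times\supp(\mes_2)$ (which is all the cost integral ever sees, since marginals are concentrated on supports), correctly supplies the one non-routine step, so no gap remains.
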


In~\cite{memoli2011gromov}, an alternative definition of the $p$-Gromov-Wasserstein distance, inspired by a reformulation of the Gromov-Hausdorff distance, is provided as:
$$\Hat{\gromov_p}((\spa_1,\dis_1,\mes_1),(\spa_2,\dis_2,\mes_2))=\frac 12 \left(\inf_{\mes}\int_{(\spa_1\times \spa_2)^2} |\dis_1(x,x')-\dis_2(y,y')|^p\,d\mes(x,y)\otimes\mes(x',y')\right)^{\frac{1}{p}},$$
where the infimum is taken over all measures $\mes$ on $\spa_1\times \spa_2$ that have marginals $\mes_1$ and $\mes_2$. Due to its computational advantage for practical applications (such as, e.g., optimization), this formulation is popular in the literature, see for example~\cite{zhang2023duality,memoli2009spectral}.
Moreover, it is shown in~\cite{memoli2011gromov}  that $\Hat{\gromov_p}\leq\gromov_p$ (see Theorem 5.1(g) therein). In this article, we provide upper bounds for $\gromov_p$ which therefore automatically transfer to $\Hat{\gromov_p}$.

\begin{proposition}\label{prop:gw_met}
The $2$-Gromov-Wasserstein distance $\gromov_2$ is a metric on the set of mm-space isomorphism classes of finite variance.    
\end{proposition}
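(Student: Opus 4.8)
The plan is to verify the metric axioms — non-negativity, symmetry, identity of indiscernibles, and the triangle inequality — together with finiteness of $\gromov_2$ on the finite-variance class. Non-negativity is immediate since the defining integrand is non-negative, and symmetry follows because the defining functional (the infimum over couplings $\mes$ and metric couplings $\dis$) is symmetric in the two spaces. For finiteness, I would fix base points $z_1\in\supp(\mes_1)$ and $z_2\in\supp(\mes_2)$, take the product measure $\mes_1\otimes\mes_2$ as coupling, and use the metric coupling on $\spa_1\sqcup\spa_2$ obtained by identifying $z_1$ with $z_2$, so that $\dis(x,y)\leq\dis_1(x,z_1)+\dis_2(z_2,y)$ for cross pairs. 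Expanding the square and applying Cauchy--Schwarz to the cross term bounds $\gromov_2^2$ by a sum of terms each controlled by $\var(\spa_1,\dis_1,\mes_1)$ and $\var(\spa_2,\dis_2,\mes_2)$, both finite by hypothesis.

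For the identity of indiscernibles, the easy direction follows by taking, for an isomorphism $\Phi$ witnessing $[\spa_1,\dis_1,\mes_1]=[\spa_2,\dis_2,\mes_2]$, the coupling supported on the graph of $\Phi$ together with the metric coupling that makes $\Phi$ an isometry; the integrand then vanishes identically, so $\gromov_2=0$. The converse is the crux. I would take a minimizing sequence of couplings $\mes^{(k)}$ and metric couplings $\dis^{(k)}$ whose cost tends to $0$; via the embedding formulation of Proposition~\ref{prop:gw_def} this is a sequence of isometric embeddings into common Polish spaces with $\wasser_2$ of the pushforwards tending to $0$. I would establish tightness of the relevant family (the marginals $\mes_1,\mes_2$ are fixed, so Prokhorov's theorem applies), extract a weakly convergent subsequence, and use lower semicontinuity of the Wasserstein cost to produce a limiting zero-cost coupling, i.e. one concentrated on pairs at distance $0$. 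From such a coupling I would construct a measurable map $\Phi\colon\supp(\mes_1)\to\supp(\mes_2)$ that is distance-preserving $\mes_1$-almost everywhere and satisfies $\mes_2=\mes_1\circ\Phi^{-1}$, then upgrade it to a genuine mm-space isomorphism on the supports.

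For the triangle inequality I would rely on a gluing argument. Given near-optimal metric couplings $\dis_{12}$ on $\spa_1\sqcup\spa_2$ and $\dis_{23}$ on $\spa_2\sqcup\spa_3$, together with couplings $\mes_{12},\mes_{23}$ realizing costs within $\eta$ of $\gromov_2(\spa_1,\spa_2)$ and $\gromov_2(\spa_2,\spa_3)$, I would glue the two pseudometrics into one on $\spa_1\sqcup\spa_2\sqcup\spa_3$ by setting $\dis_{13}(x,z)=\inf_{y\in\spa_2}[\dis_{12}(x,y)+\dis_{23}(y,z)]$, and glue the two measures along their common $\spa_2$-marginal, using disintegration of measures on Polish spaces, to obtain a measure on $\spa_1\times\spa_2\times\spa_3$. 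Projecting onto $\spa_1\times\spa_3$ gives a coupling of $\mes_1$ and $\mes_3$, and Minkowski's inequality applied to $\dis_{13}\leq\dis_{12}+\dis_{23}$ under this measure yields $\gromov_2(\spa_1,\spa_3)\leq\gromov_2(\spa_1,\spa_2)+\gromov_2(\spa_2,\spa_3)+2\eta$; letting $\eta\to0$ concludes.

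I expect the main obstacle to be the converse of the identity of indiscernibles: the infimum defining $\gromov_2$ need not be attained on non-compact finite-variance spaces, so one must argue a form of attainment along a minimizing sequence via tightness and lower semicontinuity while \emph{both} the coupling measure and the metric coupling vary, and then carefully extract from a zero-cost limiting coupling an actual measure-preserving isometry between the supports — handling the almost-everywhere ambiguities and verifying injectivity, surjectivity onto $\supp(\mes_2)$, and the pushforward condition. The gluing step in the triangle inequality also requires the Polish structure to invoke disintegration, but it is comparatively routine.
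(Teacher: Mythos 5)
Your proposal is correct and takes essentially the same route as the proof the paper relies on: the paper does not prove Proposition~\ref{prop:gw_met} itself but defers to \cite{sturm2006geometry} (Lemma~3.3 and Theorem~3.6 there), where the triangle inequality is obtained exactly by your disintegration-based gluing of near-optimal couplings along the common marginal together with the inf-convolution gluing of the two metric couplings, and the identity of indiscernibles by a compactness argument yielding a zero-cost optimal coupling from which a measure-preserving isometry of supports is extracted. The one device your sketch leaves implicit --- how to pass to a limit in the metric couplings along the minimizing sequence --- is supplied there by noting that metric couplings are $1$-Lipschitz in each variable, so an Arzel\`a--Ascoli/diagonal extraction on countable dense subsets of the supports produces the limiting pseudometric, after which your tightness and lower-semicontinuity steps go through as stated.
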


For proofs of Propositions \ref{prop:gw_def} and \ref{prop:gw_met}, see \cite{sturm2006geometry}. \label{sec:mm_space_geometry}
\section{Reeb graphs as Metric Measure Spaces}\label{sec:reeb_mms}

In this article, we aim at comparing Reeb graphs and Mapper graphs using tools and distances for metric spaces. In this section, we first define a metric structure on the Reeb graph using the Hausdorff distance, and then we study the induced topology of this corresponding metric space.

\subsection{Hausdorff distance on Reeb graphs}

We will denote the set of non-empty compact subsets of a metric space $(\spa,\dis)$ as $\comp(\spa)$.  See~\cite{henrikson1999completeness} for a comprehensive introduction of the Hausdorff distance. 

\begin{definition}{Hausdorff distance.}\\
 Let $A,B$ be two subsets of a metric space $(\spa,\dis)$, the Hausdorff distance between $A$ and $B$ is defined as:
 $$\haus(A,B)=\max\left(\sup_{x\in A}\dis(x,B),\sup_{y\in B}\dis(y,A)\right),$$
 where $\dis(x,B)=\inf_{v\in B}\dis(x,v)$ and $\dis(y,A)=\inf_{u\in A}\dis(u,y)$.
\end{definition}
With the same notations as before,  
the Hausdorff distance
can be equivalently defined as:
$$\haus(A,B)=\inf\left\{\eps>0,\,A\subseteq B^\eps\;\mathrm{ and }\;B\subseteq A^\eps\right\},$$
where $C^\eps=\bigcup_{z\in C}\ball(z,\eps)$ for every subset $C$ of $X$.

The Hausdorff distance 
$\haus$ is a metric on $\comp(\spa)$. Moreover, the topology of $(\comp(\spa),\haus)$ follows closely from the topology of $(\spa,\dis)$. We illustrate this with the two following propositions proved in~\cite{henrikson1999completeness}.

\begin{proposition}\label{prop:haus_limit}
Suppose that $(\spa,\dis)$ is complete. Then $(\comp(\spa),\haus)$ is also complete. Moreover, let $(a_n)_{n\in\N}$ be a Cauchy sequence in $(\comp(\spa),\haus)$, then $a_n\converge a$, where:
$$a=\left\{x \in \spa, \, \textrm{ there exists a sequence } (x_n)_{n\in\N}  \in \prod_n a_n  \textrm{ such that } x_n \converge x \right\}.$$
\end{proposition}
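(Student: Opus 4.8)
The plan is to fix a Cauchy sequence $(a_n)_{n\in\N}$ in $(\comp(\spa),\haus)$, prove that the candidate set $a$ is a non-empty compact subset of $\spa$, and then establish $\haus(a_n,a)\converge 0$; completeness of $(\comp(\spa),\haus)$ is immediate once every Cauchy sequence has been shown to converge. The two tools I would rely on repeatedly are the reformulation $\haus(A,B)<\eps\iff A\subseteq B^\eps\text{ and }B\subseteq A^\eps$ recalled just above, and the extraction of a subsequence $(a_{n_k})_k$ with $\haus(a_{n_k},a_{n_{k+1}})<2^{-k}$, which lets any point chosen in $a_{n_k}$ be ``followed'' into $a_{n_{k+1}}$ at cost less than $2^{-k}$.

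A convenient device is the function $g(x)=\lim_{n\to\infty}\dis(x,a_n)$. Since $|\dis(x,a_n)-\dis(x,a_m)|\le\haus(a_n,a_m)$, the sequence $(\dis(x,a_n))_n$ is Cauchy in $\R$ for each fixed $x$, so $g$ is well defined; being a pointwise limit of $1$-Lipschitz functions it is itself $1$-Lipschitz, hence continuous. I would then check that $a=\{x\in\spa:\,g(x)=0\}$: if $x\in a$, any witnessing sequence $x_n\in a_n$ with $x_n\to x$ gives $\dis(x,a_n)\le\dis(x,x_n)\to0$; conversely, if $g(x)=0$ then, using that each $a_n$ is compact so that $\dis(x,a_n)$ is attained, one picks $x_n\in a_n$ with $\dis(x_n,x)=\dis(x,a_n)\to0$, exhibiting a sequence in $\prod_n a_n$ converging to $x$. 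In particular $a$ is closed, and therefore complete as a closed subset of the complete space $\spa$.

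For non-emptiness and total boundedness I would use the fast subsequence. Starting from an arbitrary point of $a_{n_1}$ and following it through the $a_{n_k}$ produces a Cauchy sequence in $\spa$, whose limit $x$ satisfies $g(x)=0$ by the criterion above, so $a\neq\varnothing$. For total boundedness, given $\eps>0$ I choose $N$ with $\haus(a_n,a_N)<\eps$ for all $n\ge N$; passing to the limit in $\dis(x,a_N)\le\dis(x,a_n)+\haus(a_n,a_N)$ yields $\dis(x,a_N)\le\eps$ for every $x\in a$, so $a$ lies in the $\eps$-neighborhood of the compact, hence totally bounded, set $a_N$, which makes $a$ totally bounded. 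Being closed and totally bounded in a complete space, $a$ is compact, i.e. $a\in\comp(\spa)$.

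It remains to prove $\haus(a_n,a)\converge0$, which splits into two inclusions. For $a\subseteq a_n^{\eps}$ uniformly, I take $m\to\infty$ in $\dis(x,a_n)\le\dis(x,a_m)+\haus(a_m,a_n)$ to get $\dis(x,a_n)\le g(x)+\liminf_m\haus(a_m,a_n)=\liminf_m\haus(a_m,a_n)$, which is at most $\eps$ for all $n\ge N$ and all $x\in a$ simultaneously. For $a_n\subseteq a^{\eps}$, given $z\in a_n$ with $n\ge N$, I anchor the fast-subsequence construction at $z$: following $z$ through the $a_{n_k}$ gives a Cauchy sequence whose total displacement is controlled by a geometric series, whose limit lies in $a$ (again via $g$) and stays within a fixed multiple of $\eps$ of $z$. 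Combining the two inclusions gives $\haus(a_n,a)\le C\eps$ for $n\ge N$. The main obstacle is precisely this construction together with non-emptiness: both require manufacturing an actual element of the limit set out of the sets $a_n$, and the telescoping control along a rapidly Cauchy subsequence is what makes the limit exist and land in $a$, while the auxiliary function $g$ is what cleanly converts ``limit of subsequence points'' into membership in $a$.
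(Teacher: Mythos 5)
Your proof is correct. Note that the paper itself gives no proof of this proposition --- it defers to \cite{henrikson1999completeness} --- so the comparison is with the classical argument given there, and yours is essentially that argument: extract a rapidly Cauchy subsequence with $\haus(a_{n_k},a_{n_{k+1}})<2^{-k}$, chain points through it to obtain both non-emptiness of $a$ and the inclusion $a_n\subseteq a^{C\eps}$ for $n$ large, and obtain the reverse inclusion $a\subseteq a_n^{\eps}$ by passing to the limit in the triangle inequality $\dis(x,a_n)\le\dis(x,a_m)+\haus(a_m,a_n)$. The one genuine refinement you add is the auxiliary function $g(x)=\lim_n\dis(x,a_n)$: identifying $a$ with $g^{-1}(\{0\})$ gives closedness of $a$ for free (preimage of a closed set under a $1$-Lipschitz function) and, more usefully, converts the merely subsequential limits produced by your chaining construction into honest members of $a$ --- since the full limit $g(x)$ exists, vanishing along the subsequence $(a_{n_k})$ forces $g(x)=0$, and compactness of each $a_n$ (or an $\eps/2^n$ slack in the selection) then manufactures a full sequence $(x_n)\in\prod_n a_n$ converging to $x$. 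This sidesteps the fiddly interpolation step that textbook versions of the proof typically need. Your compactness argument is also sound: $a$ is closed in the complete space $\spa$, hence complete, and totally bounded because $a\subseteq a_N^{\eps}$ with $a_N$ compact; and you correctly verify $a\neq\varnothing$, which is necessary since $\comp(\spa)$ consists of non-empty compact sets.
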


\begin{proposition}
If $(\spa,\dis)$ is compact then $(\comp(\spa),\haus)$ is also compact.
\end{proposition}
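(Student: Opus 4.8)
The plan is to use the standard characterization that a metric space is compact if and only if it is complete and totally bounded, and to obtain these two properties separately. Completeness comes essentially for free: since $(\spa,\dis)$ is compact it is in particular complete, so Proposition~\ref{prop:haus_limit} applies and tells us that $(\comp(\spa),\haus)$ is complete. The entire content of the proof therefore reduces to showing that $(\comp(\spa),\haus)$ is totally bounded, i.e., that for every $\eps>0$ there is a finite collection of elements of $\comp(\spa)$ whose $\haus$-balls of radius $\eps$ cover all of $\comp(\spa)$.

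The key step is to build a finite $\eps$-net of $(\comp(\spa),\haus)$ out of a finite $\eps$-net of $(\spa,\dis)$. First I would use that $\spa$, being compact, is totally bounded, and fix a finite set of points $\{p_1,\dots,p_N\}\subseteq\spa$ with $\spa=\bigcup_{i=1}^N \ball(p_i,\eps)$. I would then consider the family of all \emph{non-empty} subsets $S\subseteq\{p_1,\dots,p_N\}$; there are only $2^N-1$ of them, each is finite and hence a non-empty compact set, so each $S\in\comp(\spa)$. The claim to verify is that this finite family is an $\eps$-net: given an arbitrary $A\in\comp(\spa)$, set $S_A=\{\,p_i : \ball(p_i,\eps)\cap A\neq\varnothing\,\}$, which is non-empty because the balls cover $A$. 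One then checks the two sides of the Hausdorff distance. For the first, every $p_i\in S_A$ meets $A$ within $\eps$, so $\dis(p_i,A)\le\eps$; for the second, every $a\in A$ lies in some $\ball(p_i,\eps)$, and that $p_i$ then belongs to $S_A$ (it meets $A$ at $a$), so $\dis(a,S_A)\le\eps$. Hence $\haus(A,S_A)\le\eps$, which gives total boundedness. Combining this with completeness yields compactness of $(\comp(\spa),\haus)$.

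I do not expect a genuine obstacle here, as the result is classical (Blaschke's selection theorem); the only point requiring care is the bookkeeping of strict versus non-strict inequalities in the $\eps$-net estimate, since $\ball(p_i,\eps)\cap A\neq\varnothing$ produces a point of $A$ at distance \emph{strictly} less than $\eps$, so the final bound $\haus(A,S_A)\le\eps$ is safely non-strict and the covering by closed (or slightly enlarged open) $\haus$-balls of radius $\eps$ is valid. An alternative route would be a direct sequential-compactness argument extracting a convergent subsequence from any sequence $(A_n)$ in $\comp(\spa)$ via a diagonal construction, but this is more laborious and, crucially, would end up re-deriving the limit description already furnished by Proposition~\ref{prop:haus_limit}; the complete-plus-totally-bounded route is cleaner precisely because it reuses that proposition.
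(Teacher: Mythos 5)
Your proof is correct and is essentially the argument the paper relies on: the paper does not prove this proposition itself but defers to \cite{henrikson1999completeness}, where compactness of $(\comp(\spa),\haus)$ is obtained exactly as you do, namely completeness (your reuse of Proposition~\ref{prop:haus_limit}) combined with total boundedness via the family of non-empty subsets of a finite $\eps$-net of $(\spa,\dis)$. Your closing remark on the non-strict inequality $\haus(A,S_A)\leq\eps$ is the right point of care, and in fact the bound is strict here since $A$ is compact and $S_A$ is finite, so nothing needs fixing.
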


Hereafter, $(\man,\dis)$ is assumed to be a connected compact Riemannian manifold together with its geodesic distance. We also consider a Morse function $f\colon\man\rightarrow\R$. 
In this section, to alleviate notations, we will denote the Reeb graph $\Reeb_f(\man):=\man/\sim_{f}$ by $\reeb$.

In order to use the Hausdorff distance, we prove the following:

\begin{proposition}\label{prop:reeb_closed}For every $a\in\reeb$, $a$ is closed as a subset of $\man$.
\end{proposition}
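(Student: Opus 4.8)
The plan is to identify each element $a\in\reeb$ with a connected component of a level set, and then to conclude from two elementary point-set topology facts. By the very definition of the equivalence relation $\sim_f$, every $a\in\reeb$ is a connected component of $\preim{f}{\{v\}}$ for some $v\in f(\man)$; write $L:=\preim{f}{\{v\}}$ for this level set. The claim then reduces to showing that such a connected component is closed as a subset of $\man$.

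First I would recall the general fact that the connected component of any point in a topological space is a closed subset of that space: the closure of a connected set is connected, so the closure of $a$ taken inside $L$ is a connected subset of $L$ containing $a$, and by maximality of connected components it must coincide with $a$. Hence $a$ is closed in the subspace $L$. As an alternative route that exploits the structure already established, one may instead invoke Proposition~\ref{prop:local_path_con}: since $L$ is compact and locally path connected, it has only finitely many connected components, each of which is open in $L$ by local connectedness, and a member of a finite open partition is automatically closed in $L$ as the complement of the union of the remaining (open) components.

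Second, since $f$ is continuous and $\{v\}$ is closed in $\R$, the level set $L=\preim{f}{\{v\}}$ is closed in $\man$. Combining this with the previous step through the transitivity of closedness—a set that is closed in a subspace which is itself closed in $\man$ is closed in $\man$—yields that $a$ is closed in $\man$. I do not anticipate any genuine obstacle; the only care required is the bookkeeping that identifies the equivalence classes of $\sim_f$ exactly with connected components of level sets, after which the statement follows from standard topology.
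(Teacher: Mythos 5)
Your proof is correct and follows essentially the same route as the paper's: identify $a$ as a connected component of the level set $\preim{f}{\{v\}}$, argue it is closed there via maximality and the fact that the closure of a connected set is connected, and conclude by continuity of $f$ and transitivity of closedness. The alternative argument you sketch via Proposition~\ref{prop:local_path_con} is a valid (if slightly heavier) variant in this Morse setting, but it is not needed.
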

\begin{proof}
Let $a\in\reeb$, then $a$ is defined as a connected component of $\preim{f}{\{v\}}$ for some $v$ in $f(\man)$. The set
$a$ is therefore closed in $\preim{f}{\{v\}}$ since it is a connected component, as connected components are maximal connected subsets and the closure of a connected subset is also connected. Moreover, $\preim{f}{\{v\}}$ is closed in $\man$ because $f$ is continuous. As such, $a$ is closed in a closed subset and consequently closed in $\man$.
\end{proof}

The manifold $\man$ being compact, Proposition~\ref{prop:reeb_closed} proves that $\reeb\subseteq\comp(\man)$. Thus, $(\reeb,\haus)$ is a metric space.

\subsection{Topology of \texorpdfstring{$(\reeb,\haus)$}{TEXT}}
In order to define Wasserstein and Gromov-Wasserstein distances on top of $(\reeb,\haus)$, the space $\reeb$ needs to be complete and separable. However, we first show that this metric space is not complete in general (see Figure \ref{fig:reeb_adherence}). To circumvent this, we consider its closure $\overline{\reeb}$, which is compact and as such complete. We show that $\reeb$ differs from $\overline{\reeb}$ only by a finite number of points. 
Let us start with preliminary results.

\begin{proposition}\label{prop:reeb_adherence} Let $a\in\comp(\man)$ be an adherent point to $\reeb$. Then $a$ is connected and $f$ is constant on $a$.

\end{proposition}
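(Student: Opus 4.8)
The plan is to realise $a$ as a Hausdorff limit of level-set components and then transfer their two defining properties---constancy of $f$ and connectedness---to the limit. First I would note that since $\man$ is compact, $(\comp(\man),\haus)$ is compact, hence complete, so an adherent point $a$ of $\reeb$ is genuinely the limit of a sequence $(a_n)_{n\in\N}$ in $\reeb$ with $\haus(a_n,a)\converge 0$. By definition of $\reeb$, each $a_n$ is a connected component of a level set $\preim{f}{\{v_n\}}$ for some $v_n\in f(\man)$; in particular, each $a_n$ is connected and $f\equiv v_n$ on $a_n$.

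For the constancy of $f$ on $a$, I would fix $x\in a$ and use the one-sided Hausdorff bound $\dis(x,a_n)\le\haus(a,a_n)\converge 0$. Since each $a_n$ is compact, the infimum defining $\dis(x,a_n)$ is attained at some $x_n\in a_n$, so $x_n\converge x$. Continuity of $f$ then gives $v_n=f(x_n)\converge f(x)$. The key observation is that the sequence $(v_n)$ does not depend on the chosen point $x$, so its limit is a single real number $v$; hence $f(x)=v$ for every $x\in a$, i.e. $f$ is constant on $a$.

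The harder part is the connectedness of $a$, which amounts to the classical fact that a Hausdorff limit of connected compact sets is connected. I would argue by contradiction: if $a$ were disconnected, write $a=A\sqcup B$ with $A,B$ nonempty, disjoint and compact, and set $\delta=\dis(A,B)>0$. The open neighborhoods $U=A^{\delta/3}$ and $V=B^{\delta/3}$ are then disjoint by the triangle inequality, and $a\subseteq U\cup V$. For $n$ large enough that $\haus(a,a_n)<\delta/3$, the two-sided Hausdorff bound forces $a_n\subseteq a^{\delta/3}=U\cup V$, while the fact that $A$ and $B$ are nonempty forces $a_n$ to meet both $U$ and $V$. This exhibits a separation of $a_n$ by two disjoint open sets, contradicting the connectedness of $a_n$; hence $a$ is connected.

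I expect the only delicate points to be bookkeeping ones: attaining the infimum $\dis(x,a_n)$ via compactness of $a_n$, and checking both the inclusion $a_n\subseteq U\cup V$ and the nonempty intersections $a_n\cap U$, $a_n\cap V$ from the two directions of the Hausdorff distance. The geometry and the use of Morse theory play no role here; everything follows from the compactness of $\man$ and the continuity of $f$.
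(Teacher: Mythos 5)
Your proof is correct and takes essentially the same approach as the paper: the connectedness step is the same separation-by-disjoint-neighborhoods contradiction (the paper uses radius $\eps/2$ with $\eps=\tfrac12\dis(a_0,a_1)$ where you use $\delta/3$), exploiting both directions of the Hausdorff distance exactly as you do. For the constancy of $f$, the paper invokes its characterization of Hausdorff limits (Proposition~\ref{prop:haus_limit}) to produce sequences $x_n,y_n\in a_n$ converging to two arbitrary points of $a$, whereas you use nearest-point projections and uniqueness of the limit of the value sequence $(v_n)$ --- an equally valid bookkeeping variant of the same limit-transfer idea.
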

\begin{proof}
Let $a\in\comp(\man)$ such that there exists $(a_n)_{n\in\N}\in\left(\reeb\right)^{\N}$ that converges to $a$.

$\bullet$ We first show that $a$ is connected. 
    Suppose that $a$ is not connected. Since it is compact, this means that it can be written as the disjoint union of two compact sets $a_0\sqcup a_1$.  Now, $a_0$ and $a_1$ being disjoint and both compact, there exists $\eps>0$ such that $a_0^\eps$ and $a_1^\eps$ are disjoint. To see this, consider for example $\eps :=\frac{1}{2}\ \inf_{x\in a_0,y\in a_1}\dis(x,y)$  and notice that $\inf_{x\in a_0,y\in a_1}\dis(x,y)>0$ by compactness of $a_0$ and $a_1$. \\
    Next, there exists a rank $N\in\N$ such that $\haus(a_N,a)\leq\frac{\eps}{2}$. This means that 
     $a_N\subseteq a^{\frac{\eps}{2}}=a_0^{\frac{\eps}{2}}\sqcup a_1^{\frac{\eps}{2}},$ 
    and since $a_N$ is connected, it is included in one of $a_0^{\frac{\eps}{2}}$ and $a_1^{\frac{\eps}{2}}$ and not in the other. W.l.o.g.,
    we suppose that it is included in $a_0^{\frac{\eps}{2}}$. 
    Let $x\in a_1$, there exists $z\in a_N$ such that $\dis(x,z)\leq\frac{\eps}{2}$ because $a\subseteq a_N^{\frac{\eps}{2}}$. 
    In the same way, there exists $y\in a_1$ that satisfies $\dis(y,z)\leq\frac{\eps}{2}$, since $a_N\subseteq a_0^{\frac{\eps}{2}}$. 
    However, this means that there exist two points $(x,y)\in a_0\times a_1$ such that $\dis(x,y)\leq \eps$ and this contradicts the fact that $a_0^\eps$ and $a_1^\eps$ are disjoint. Note that we have shown more generally that every limit in the Hausdorff distance  of a sequence of connected compact sets is connected.

$\bullet$ We now show that $f$ is constant on $a$. 
    Let $x,y\in a$. By Proposition~\ref{prop:haus_limit}, we know that there exist two sequences $(x_n)_{n\in\N}$ and $(y_n)_{n\in\N}$ in $(a_n)_{n\in\N}$ such that $x_n\converge x$ and $y_n\converge y$. 
    Now, for every $n$, since  $x_n,y_n\in a_n$, then $f(x_n)=f(y_n)$ as $f$ is constant on $a_n$. Using this fact, we have:
    \begin{align*}
        | f(x)-f(y) |&\leq | f(x)-f(x_n) | + | f(x_n)-f(y_n) |+ | f(y_n)-f(y) |\\
        &\leq | f(x)-f(x_n) | + | f(y_n)-f(y) |.
    \end{align*}
    By continuity of $f$, it yields that $f(x)=f(y)$.

\end{proof}

The second point of Proposition~\ref{prop:reeb_adherence} proves that we can define a function $\Tilde{f}\colon \overline{\reeb}\rightarrow\R$ that associates to every adherent point of $\reeb$ the constant value that $f$ takes on it. Proposition~\ref{prop:reeb_adherence} as a whole might seem to indicate that $\overline{\reeb}=\reeb$ could also be proved, 
but this turns out to be false in general. The crucial property that the elements of $\overline{\reeb}\setminus\reeb$ do not satisfy 
is to be associated to {\it maximal} connected subsets. See Figure \ref{fig:reeb_adherence} for an example.
However, we now prove that $\overline{\reeb}$ and $\reeb$ differ only by a finite number of points.

\begin{figure}[t]
    \centering
    \includegraphics[width=0.4\textwidth]{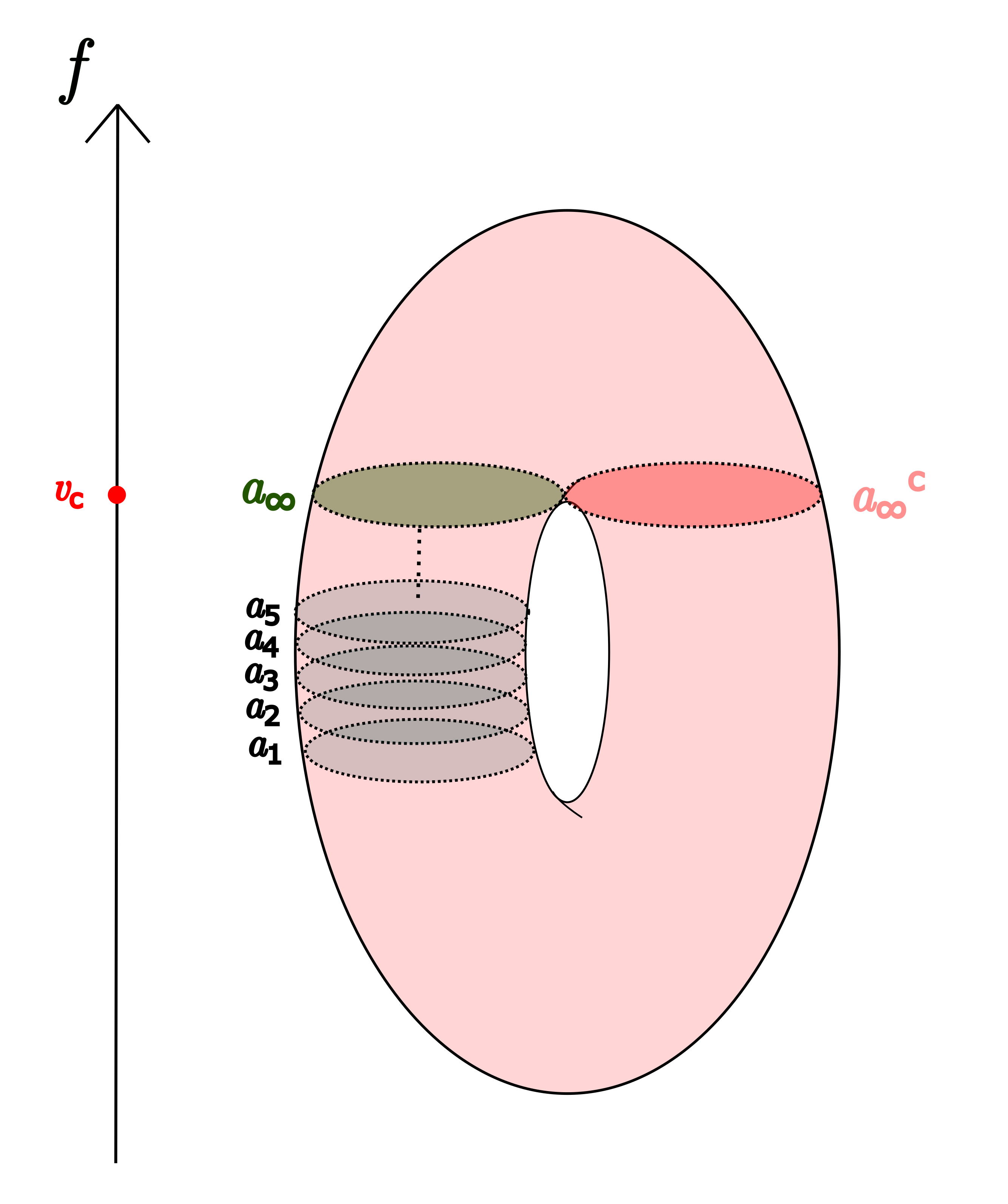}
    \caption{Example of a Reeb graph sequence $(a_n)_{n\in\N}$ that has a Hausdorff limit $a_\infty$ outside of the Reeb graph. Notice that this occurs when $a_\infty$ is inside the level set of a critical value $v_c$. In this example, $\preim{f}{\{v_c\}}$ has only one connected component. The complement $a_\infty^c$ of $a_\infty$ in this connected component is also represented here.}
    \label{fig:reeb_adherence}
\end{figure}

\begin{lemma}\label{lem:haus_cont}
Let $(a_n)_{n\in\N}\in\reeb^{\N}$ and $(x_n)_{n\in\N}\in\man^{\N}$, such that $x_n\in a_n$ 
for every $n\in\N$ and such that $(x_n)_{n\in\N}$ admits a limit $x \in \man$. 
If $f(x)$ is not a critical value, then $a_n\converge [x]_{\sim_f}$.
\end{lemma}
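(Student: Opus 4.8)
The plan is to exploit the cylindrical structure around the regular value $v:=f(x)$ provided by Theorem~\ref{thm:morse}. This will let me identify $a_n$ explicitly as the gradient-flow image of the limit component $[x]_{\sim_f}$, after which Hausdorff convergence follows from a uniform continuity argument.

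First I would use that $v$ is not a critical value to apply Theorem~\ref{thm:morse}, obtaining some $\eps>0$ for which $\preim{f}{[v-\eps,v+\eps]}$ contains no critical point and the gradient-flow map $\psi\colon\preim{f}{\{v\}}\times[-\eps,\eps]\to\preim{f}{[v-\eps,v+\eps]}$, $(q,t)\mapsto\psi_t(q)$, is a homeomorphism. By continuity of $f$ and since $x_n\converge x$, one has $f(x_n)\in[v-\eps,v+\eps]$ for all $n$ large enough; for such $n$ I write $\psi^{-1}(x_n)=(q_n,t_n)$ with $q_n=\psi_{v-f(x_n)}(x_n)\in\preim{f}{\{v\}}$ and $t_n=f(x_n)-v$. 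Since $\psi^{-1}$ is continuous (Theorem~\ref{thm:morse}) and $\psi^{-1}(x)=(x,0)$ (as $f(x)=v$), the convergence $x_n\converge x$ forces $q_n\converge x$ and $t_n\converge 0$.

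Next I would identify $a_n$ precisely. Because $v+t_n\in[v-\eps,v+\eps]$, the entire level set $\preim{f}{\{v+t_n\}}$ lies inside the cylinder, and by Lemma~\ref{lem:flow} the map $\psi_{t_n}$ restricts to a homeomorphism from $\preim{f}{\{v\}}$ onto $\preim{f}{\{v+t_n\}}$, hence sends connected components bijectively onto connected components. Writing $C:=[x]_{\sim_f}$ for the component of $\preim{f}{\{v\}}$ containing $x$, the finitely many components of the compact level set $\preim{f}{\{v\}}$ (Proposition~\ref{prop:local_path_con}) are pairwise disjoint compact sets, hence separated by a positive distance; since $q_n\converge x\in C$, for $n$ large one has $q_n\in C$. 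As $x_n=\psi_{t_n}(q_n)$, the component of $\preim{f}{\{f(x_n)\}}=\preim{f}{\{v+t_n\}}$ containing $x_n$ is exactly $a_n=\psi_{t_n}(C)$.

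Finally I would prove $\haus(\psi_{t_n}(C),C)\converge 0$. Since $\preim{f}{\{v\}}\times[-\eps,\eps]$ is compact, $\psi$ is uniformly continuous, so $\sup_{q\in C}\dis(\psi(q,t_n),\psi(q,0))\converge 0$ because $t_n\converge 0$. Both suprema appearing in $\haus(\psi_{t_n}(C),C)$ are bounded above by $\sup_{q\in C}\dis(\psi_{t_n}(q),q)$, which yields $a_n=\psi_{t_n}(C)\converge C=[x]_{\sim_f}$. The main obstacle is the middle step: one must guarantee that $a_n$, a priori a connected component of a level set that could sit anywhere in $\man$, is actually confined to the cylinder around $x$ and coincides with the flow image of the single component $C$. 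This relies jointly on the confinement of $\preim{f}{\{v+t_n\}}$ to the cylinder and on the separation of the components of $\preim{f}{\{v\}}$ to pin down $q_n\in C$; once $a_n=\psi_{t_n}(C)$ is established, the concluding Hausdorff estimate is a routine uniform-continuity argument, and it is exactly here that the hypothesis that $f(x)$ is regular is essential, since a critical value would destroy the product structure (cf.\ Figure~\ref{fig:reeb_adherence}).
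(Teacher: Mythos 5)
Your proof is correct, but it follows a genuinely different route from the paper's. The paper argues by compactness of $(\overline{\reeb},\haus)$: it extracts an arbitrary convergent subsequence $(a_{\phi(n)})_{n\in\N}$ and shows its limit $a$ must equal $[x]_{\sim_f}$ by proving two inclusions --- $a\subseteq[x]_{\sim_f}$ via the explicit description of Hausdorff limits (Proposition~\ref{prop:haus_limit}) combined with Proposition~\ref{prop:reeb_adherence} (limits are connected and $f$ is constant on them), and $[x]_{\sim_f}\subseteq a$ by flowing each $y\in[x]_{\sim_f}$ to points $y_n\in a_{\phi(n)}$, using local path-connectedness of $\preim{f}{\{v\}}$ to ensure these land in the correct component; uniqueness of the adherent point then yields convergence. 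You instead identify $a_n$ outright: writing $\psi^{-1}(x_n)=(q_n,t_n)$, the positive separation of the finitely many compact components of $\preim{f}{\{v\}}$ (finiteness from Proposition~\ref{prop:local_path_con}) forces $q_n\in C:=[x]_{\sim_f}$ eventually, and since the flow $\psi_{t_n}$ of Lemma~\ref{lem:flow} is a homeomorphism between level sets carrying components bijectively to components, $a_n=\psi_{t_n}(C)$; convergence then follows from the quantitative bound $\haus(\psi_{t_n}(C),C)\leq\sup_{q\in C}\dis(\psi_{t_n}(q),q)\converge 0$, by uniform continuity of $\psi$ on the compact cylinder of Theorem~\ref{thm:morse}. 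Your component-pinning step is mechanically close to the paper's (positive separation of disjoint compact components versus a local path-connectedness radius at $x$; both rest on the same ingredients), but your argument buys more: it avoids the compactness of $(\comp(\man),\haus)$ and the adherent-point/subsequence machinery entirely, it gives an explicit rate --- $\haus(a_n,[x]_{\sim_f})$ is controlled by the modulus of continuity of $\psi$ evaluated at $|f(x_n)-v|$ --- and it establishes the stronger structural fact $a_n=\psi_{t_n}([x]_{\sim_f})$ for large $n$, which is in the spirit of what the paper later proves separately near critical values in Lemma~\ref{lem:3cases}. What the paper's route buys in exchange is that it never needs to identify $a_n$ explicitly and it reuses the general results on $\overline{\reeb}$ (Propositions~\ref{prop:haus_limit} and~\ref{prop:reeb_adherence}) already developed for the study of the closure, which is why the authors organized the argument that way.
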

\begin{proof}
Let $(a_n=[x_n]_{\sim_f})_{n\in\N} \in \reeb^{\N}$ be such that $(x_n)_{n\in\N}$ admits a limit $x \in \man$. As $\overline{\reeb}$ is compact, it converges if and only if it has one and only one adherent point. It admits an adherent point by compactness of $\overline{\reeb}$ and we have to show that it is unique and equal to $[x]_{\sim_f}$. Let $(a_{\phi(n)})_{n\in\N}$ be a subsequence of $a_n$ that converges to an element $a\in\overline{\reeb}$. 

$\bullet$ We first show that $a\subseteq[x]_{\sim_f}$. By Proposition~\ref{prop:haus_limit}, we know that:
$$a=\left\{y \in \man \, , \textrm{ there exists a sequence } (y_n)_n  \in \prod_n a_{\phi(n)} 
\textrm{ such that } y_n \converge y \right\}.$$
Now, $(x_{\phi(n)})_{n\in\N}$ is a subsequence of $(x_n)_{n\in\N}$ and as such $x_{\phi(n)}\converge x$. Therefore, $x\in a$, and furthermore by Proposition~\ref{prop:reeb_adherence}, $a$ is connected and included in $\preim{f}{\{v\}}$, where $v=f(x)$. These three points prove that $a\subseteq[x]_{\sim_f}$, since by definition $[x]_{\sim_f}$ is the maximal subset that verifies these three exact requirements. 

$\bullet$ We now show that $[x]_{\sim_f}\subseteq a$. Let $y\in[x]_{\sim_f}$. As $v=f(x)$ is not a critical value, by Theorem~\ref{thm:morse}, there exists $\eps>0$ and a homeomorphism $\psi:\preim{f}{\{v\}}\times[-\eps,\eps] \rightarrow \preim{f}{[v-\eps,v+\eps]}$. 
Since $f$ is continuous, there exists a rank $N\in\N$ such that $\forall n\geq N$: $|f(x_{\phi(n)})-v|\leq \eps$. Therefore, the following sequence is well defined:
$$\forall n\in\mathbb{N}:\,y_n=\begin{cases}
    x_{\phi(n)}&\text{ if } n<N,\\
    \psi\left(y,f(x_{\phi(n)})-v\right)&\text{otherwise}.
\end{cases}$$
By continuity of $\psi$ and $f$, $y_n\converge y$ (since $\psi(\cdot,0)$ is the identity by construction of $\psi$, see the proofs of Lemma~\ref{lem:flow} and Theorem~\ref{thm:morse}). It only remains to show that for a large enough rank $M\in\N$, one has $y_n\in a_{\phi(n)}$,
to conclude that $y\in a$.\\\\
On one hand, since $v$ is not a critical value, $\preim{f}{\{v\}}$ is a submanifold of $\man$ by the Implicit Function Theorem (see Theorem 5.8. of~\cite{boothby1986introduction} for example). The set $\preim{f}{\{v\}}$ is hence locally path-connected. Thus, there exists $\delta>0$ such that for all  $z\in \preim{f}{\{v\}}$, $\dis(x,z)\leq\delta$ implies that $z\in[x]_{\sim_f}$.\\\\
On the other hand, by continuity, we have $\preim{\psi}{x_{\phi(n)}}\converge \preim{\psi}{x}=(x,0)$. From the expression of $\psi^{-1}$ given in the proof of Theorem~\ref{thm:morse}, we find that   $\preim{\psi}{x_{\phi(n)}}=\left(z_n,w_n\right)$ where $z_n \in \preim{f}{\{v\}}$ and  $w_n=f(x_{\phi(n)})-v$. Thus, there exists a rank $M\in\N$ such that $\forall n\geq M$: $\dis(z_n,x)\leq\delta$ and then 
 $z_n$ and $y$ are in the same connected set $[x]_{\sim_f}$. Moreover, by continuity of the map $\psi\left(\cdot,w_n\right) : \preim{f}{\{v\}} \rightarrow  \preim{f}{\{w_n\}}$, the images  $x_{\phi(n)} = \psi\left(z_n,w_n\right)$ and $y_n = \psi\left(y,w_n\right)$  are carried to the same connected component of $\preim{f}{\{w_n\}}$. Consequently $y_n\in[x_{\phi(n)}]_{\sim_f}$ and we know that $[x_{\phi(n)}]_{\sim_f}=a_{\phi(n)}$. See Figure \ref{fig:lemma_continuity} for an illustration.

Finally, $(a_n)_{n\in\N}$ admits one and only one adherent point, which is $[x]_{\sim_f}$.

\begin{figure}[t]
    \centering
    \includegraphics[width=0.4\textwidth]{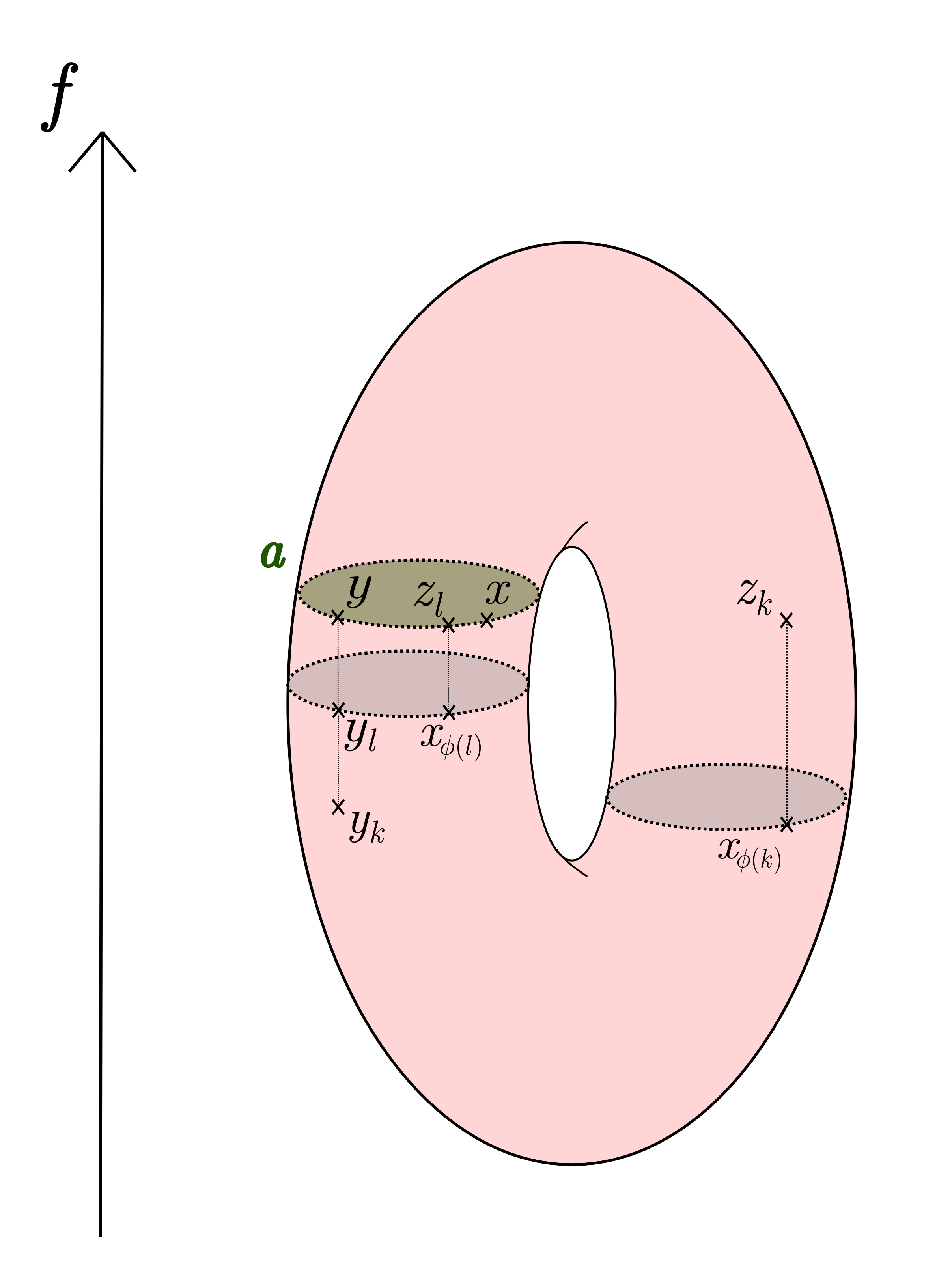}
    \caption{Illustration of the argument made in the proof of Lemma \ref{lem:haus_cont}. Two points $x_{\phi(k)}$ and $x_{\phi(l)}$ of the sequence $(x_{\phi(n)})_{n\in\N}$ are given as examples. The sequence $(z_n)_{n\in\N}$ is made of the projections of the elements of $(x_{\phi(n)})_{n\in\N}$ onto the level set where $a$ belongs using the gradient flow. Similarly, the sequence $(y_n)_{n\in\N}$ is made of the projections of $y$ onto the level sets $\preim{f}{\{x_{\phi(n)}\}}$. We see that, by continuity, $(z_n)_{n\in\N}$ must be in $a$ (for a large enough rank). This means that after this rank, $y_n$ and $x_{\phi(n)}$ must be in the same element of the Reeb graph.}
    \label{fig:lemma_continuity}
\end{figure}

\end{proof}

\begin{proposition}\label{prop:reeb_finite}
$\overline{\reeb}\setminus\reeb$ is finite.
\end{proposition}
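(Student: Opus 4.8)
The plan is to localize the ``new'' points of $\overline{\reeb}$ to the finitely many critical values of $f$, and then to bound the contribution of each critical value separately. First I would show that every $a\in\overline{\reeb}\setminus\reeb$ is carried by a critical level set. Write $a=\lim_n a_n$ with $a_n\in\reeb$; by Proposition~\ref{prop:reeb_adherence}, $a$ is connected and $f$ is constant on $a$, equal to some value $v$. Suppose $v$ were not critical. Picking any $x\in a$, Proposition~\ref{prop:haus_limit} produces a sequence $x_n\in a_n$ with $x_n\converge x$; since $f(x)=v$ is non-critical, Lemma~\ref{lem:haus_cont} forces $a_n\converge[x]_{\sim_f}$, and uniqueness of limits for the metric $\haus$ gives $a=[x]_{\sim_f}\in\reeb$, a contradiction. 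Hence $v$ is a critical value. Because $f$ is Morse on the compact manifold $\man$, the set $\critic(f)$ is finite (Lemma~\ref{lem:morse}), so there are finitely many critical values, and it suffices to bound, for each fixed critical value $v$, the number of elements of $\overline{\reeb}\setminus\reeb$ on which $f\equiv v$.

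Fix such a $v$ and set $\Sigma=\preim{f}{\{v\}}$ and $\Sigma^*=\Sigma\setminus\critic(f)$. Any relevant $a$ is a connected compact subset of $\Sigma$, and the key step I would prove is that $a\cap\Sigma^*$ is open in $\Sigma^*$. Let $y\in a$ be non-critical; since $\nabla f(y)\neq 0$, the local flow of $\nabla f/\Vert\nabla f\Vert^2$ (defined exactly as in the proof of Lemma~\ref{lem:flow}, but only on a small neighborhood $W$ of $y$ avoiding $\critic(f)$) gives a product structure, identifying $W\cap\preim{f}{[v-\eps,v+\eps]}$ with $(W\cap\Sigma)\times[-\eps,\eps]$ via $\psi_t$, with $W\cap\Sigma$ connected. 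Writing $a=\lim_n a_n$ with $a_n$ a component of $\preim{f}{\{v_n\}}$ and $v_n\converge v$ (this convergence follows by continuity, as in the first paragraph), I would take $y_n\in a_n$ with $y_n\converge y$; for large $n$ these lie in $W$ and project to $\tilde y_n\in W\cap\Sigma$ with $\tilde y_n\converge y$. For any $w\in W\cap\Sigma$, the connected set $\psi_{v_n-v}(W\cap\Sigma)\subseteq\preim{f}{\{v_n\}}$ contains $y_n\in a_n$, hence lies entirely in the connected component $a_n$; thus $\psi_{v_n-v}(w)\in a_n$, and since $\psi_{v_n-v}(w)\converge w$ we conclude $w\in a$ by the limit description of Proposition~\ref{prop:haus_limit}. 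Therefore $W\cap\Sigma\subseteq a$, which proves openness. As $a$ is also closed, $a\cap\Sigma^*$ is clopen in $\Sigma^*$, i.e. a union of connected components of $\Sigma^*$.

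To finish I would count: $\Sigma$ is compact and locally path connected (Proposition~\ref{prop:local_path_con}), hence has finitely many components, and by the cone normal form of the Morse Lemma each of the finitely many critical points lying in $\Sigma$ has only finitely many local branches, so $\Sigma^*$ has finitely many connected components. Writing $a=(a\cap\Sigma^*)\sqcup(a\cap\critic(f))$, the first factor is one of finitely many unions of components of $\Sigma^*$, and the second is a subset of the finite set $\critic(f)\cap\Sigma$; hence there are only finitely many possible sets $a$ at the value $v$. Combined with the finiteness of critical values, this shows $\overline{\reeb}\setminus\reeb$ is finite. I expect the main obstacle to be the openness claim of the second paragraph: Theorem~\ref{thm:morse} cannot be invoked directly, since $\preim{f}{[v-\eps,v+\eps]}$ necessarily contains the critical point(s) of the level $v$, so one must build a genuinely local product structure around each non-critical $y\in a$ and verify that the local flow carries an entire $\Sigma$-neighborhood of $y$ into the $a_n$'s. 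This is precisely the analogue, at a critical level, of the continuity argument used in Lemma~\ref{lem:haus_cont}.
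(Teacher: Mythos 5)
Your proof is correct, and it takes a genuinely different route from the paper's. The first step coincides: both you and the paper use Lemma~\ref{lem:haus_cont} to show that $\Tilde{f}(a)$ must be a critical value for any non-empty $a\in\overline{\reeb}\setminus\reeb$. From there the paper argues \emph{transversally} to the critical level: Lemma~\ref{lem:3cases} pigeonholes a subsequence of any approximating sequence $(a_n)_{n\in\N}$ into gradient-flow images of a single connected component $c$ of a fixed regular level $\preim{f}{\{\frac{v^-+v}{2}\}}$ or $\preim{f}{\{\frac{v+v^+}{2}\}}$, and Lemma~\ref{lem:critical} (a quantitative estimate on the flow away from small balls around the critical points) shows the Hausdorff limit depends only on $c$; finiteness follows because those regular levels have finitely many components. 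You instead work \emph{inside} the critical level $\Sigma=\preim{f}{\{v\}}$, proving that $a\cap\Sigma^*$ is clopen in $\Sigma^*=\Sigma\setminus\critic(f)$, hence a union of its components. Your flow-box argument for openness is sound, and it even handles the case $v_n=v$ uniformly (then $\psi_{v_n-v}=\mathrm{id}$ and $W\cap\Sigma\subseteq a_n$ directly, with $w\in a$ again via Proposition~\ref{prop:haus_limit}), so you can dispense with the paper's trichotomy altogether; as you anticipate, the product identification should be phrased carefully as: $(q,t)\mapsto\psi_t(q)$ is a homeomorphism from $(W'\cap\Sigma)\times[-\tau,\tau]$ onto a neighborhood of $y$, for a smaller box $W'$ whose short-time flow lines remain in $W$. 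What each route buys: the paper trades your local analysis for the global estimate of Lemma~\ref{lem:critical} and obtains a sharper count (at most the number of cylinders adjacent to the critical level), whereas your argument is purely local and avoids that estimate, but incurs one extra claim that you state rather quickly, namely that $\Sigma^*$ has finitely many connected components. That claim is true but deserves the standard compactness argument: by the Morse normal form, each punctured cone neighborhood of a critical point in $\Sigma$ has finitely many components (those of the link $S^{i-1}\times S^{d-i-1}$), and the complement in $\Sigma$ of these finitely many open neighborhoods is compact and locally path connected, hence has finitely many components; every component of $\Sigma^*$ must contain one of these finitely many pieces. With that spelled out, your coarser count (unions of components of $\Sigma^*$ together with subsets of the finite set $\critic(f)\cap\Sigma$, over the finitely many critical values) yields finiteness just as well.
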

\begin{proof}
Recall that $\Tilde{f}\colon \overline{\reeb}\rightarrow\R$ is defined as the function that associates any adherent point of $\reeb$ to the constant value that $f$ takes on it. Let $\psi_{\cdot}$ be the gradient flow of $f$.

 Let $a\in\overline{\reeb}$ be non-empty. First, assume that $v=\Tilde{f}(a)$ is not a critical value of $f$. Let $(a_n)_{n\in\N}$ be a sequence of elements in $\reeb$ that converges to $a$. Let $x\in a$ and choose a sequence $(x_n)_{n\in\N}$ such that $x_n\in a_n$ for every $n\in\N$ and $x_n\converge x$. By Lemma~\ref{lem:haus_cont}, $a_n\converge[x]_{\sim_f}$ and as such $a=[x]_{\sim_f}$ and in particular  $a\in\reeb$. We thus have showed that for any  $a\in\overline{\reeb}\setminus\reeb$ which is non empty, $\Tilde{f}(a)$ is a critical value.
 
We continue the proof of the proposition with the following lemma.

\begin{lemma} 
\label{lem:3cases}
Let $a\in\overline{\reeb}$ such that $v=\Tilde{f}(a)$ is a critical value. Let $v^-$ (resp. $v^+$) denote the closest critical value $v'$ to $v$ satisfying $v'<v$ (resp. $v'>v$) if it exists.  
Let also $(a_n)_{n\in\N}$ be a sequence of elements in $\reeb$ that converges to $a$. Then, we can extract a subsequence $(a_{\phi(n)})_{n\in\N}$ from $(a_n)_{n\in\N}$ such that one of the three following assertions is satisfied:
\begin{enumerate}
    \item $\forall n\in\mathbb{N}, $ $\Tilde{f}(a_{\phi(n)})=\Tilde{f}(a) = v$;
    \item there exists a connected component $c$ of $\preim{f}{\{\frac{v^-+v}{2}\}}$ such that $\forall n\in\N$, $a_{\phi(n)}=\psi_{v_n-(\frac{v^-+v}{2})}(c)$ where $v_n\in(\frac{v^-+v}{2},v) $, and $(v_n)_{n\in\N}$ converges to $v$;
    \item there exists a connected component $c$ of $\preim{f}{\{\frac{v+v^+}{2}\}}$  such that $\forall n\in\N$, $a_{\phi(n)}=\psi_{v_n-(\frac{v+v^+}{2})}(c)$ where $v_n\in(v,\frac{v+v^+}{2})$, and $(v_n)_{n\in\N}$ converges to $v$.
\end{enumerate}
\end{lemma}

\begin{proof}
If there exists a rank after which the sequence $(a_n)_{n\in\N}$ verifies $\Tilde{f}(a_n)=v$, then the first assertion is satisfied. In the following of the proof, we thus assume that $\forall n\in\N$, there exists $N_n\geq n$ such that $\Tilde{f}(a_{N_n})\neq v$. This allows us to extract a subsequence $(a_{\phi(n)})_{n\in\N}$ that satisfies $\Tilde{f}(a_{\phi(n)})\neq v$ for all $n\in\N$. W.l.o.g., by re-extracting, we can assume that $\forall n\in \N,\,\Tilde{f}(a_{\phi(n)})< v$.


Notice that $a_{\phi(n)}\converge a$. Let $x\in a$ and $(x_n)_{n\in\N}\in(a_{\phi(n)})_{n\in\N}$ such that $x_n\converge x$. 
By continuity of $f$, there exists $N\in\N$, such that $\forall n\geq N$, $f(x_n)\in(\frac{v^-+v}{2},v)$.
$[\frac{v^-+v}{2},f(x_n)]$ does not contain critical values. As such, we can construct a diffeomorphism $\psi_{f(x_n)-\frac{v^-+v}{2}}$ between $\preim{f}{\{\frac{v^-+v}{2}\}}$ and $\preim{f}{\{f(x_n)\}}$ using the gradient flow of $f$, see Lemma~\ref{lem:flow} for more details. 

Consider now the connected components $(c_i)_i$ of $\preim{f}{\{\frac{v^-+v}{2}\}}$. We will prove that: $$ \exists i,\,\forall n\in\N,\,\exists k_n\geq n:\,a_{\phi(k_n)}=\psi_{f(x_{k_n})}(c_i) .$$
The opposite of this means that for all connected components $c_i$ there is a rank $n_i$ such that $\forall k\geq n_i$: $a_{\phi(k)}$ is not diffeomorphic to $c_i$. Since the number of $(c_i)_i$ is finite (see Proposition~\ref{prop:local_path_con}), taking $K=\max_i n_i$, we would have that $\forall k\geq K$, $a_{\phi(k)}$ is not diffeomorphic to any of the $c_i$ under the flow $\psi_{f(x_k)-\frac{v^-+v}{2}}$. This contradicts that $\forall n\geq N$: $\preim{f}{\{f(x_n)\}}$ and $\preim{f}{\{\frac{v^-+v}{2}\}}$ are diffeomorphic under the flow $\psi_{f(x_k)-\frac{v^-+v}{2}}$, and that this diffeomorphism induces a bijection between connected components. This can be understood as a application of the pigeonhole principle, see Figure \ref{fig:reeb_tirroir} for an illustration. 

This allows to extract a further subsequence $(a_{\Phi(n)})_{n\in\N}$ that converges to $a$, and that consists only of diffeomorphic images of a fixed connected component $c_i$, under well determined maps $(\psi_{v_n-\frac{v^-+v}{2}})_{n\in\N}$ such that $v_n\converge v$.
 
We therefore proved the lemma.

\begin{figure}[t]
    \centering
    \includegraphics[width=0.4\textwidth]{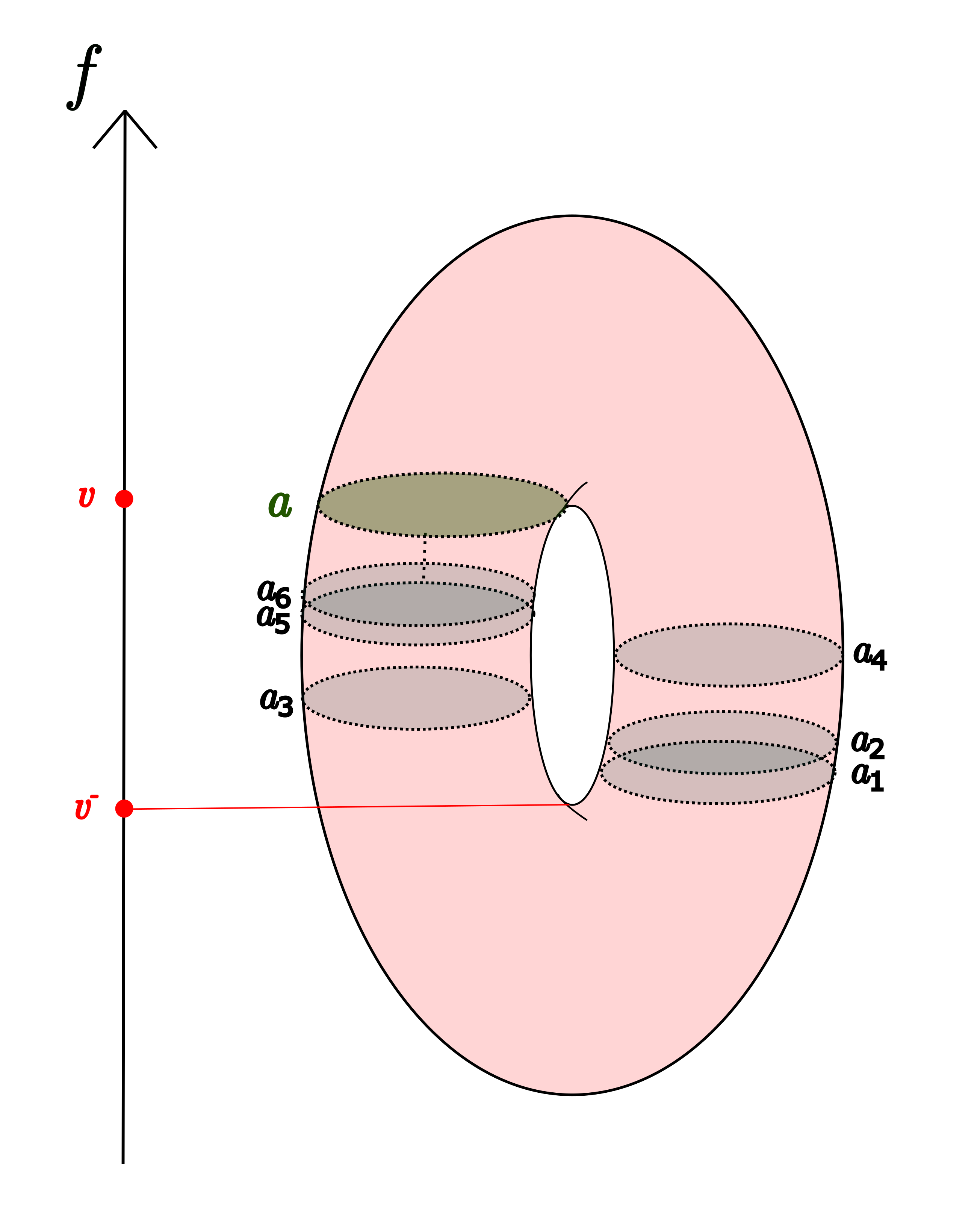}
    \caption{Example of a convergent sequence $(a_n)_{n\in\N}$ of Reeb graph elements  associated to filter values inside the open interval $(v^-,v)$. Since the number of cylinders in $\preim{f}{(v^-,v)}$ is finite, $(a_n)_{n\in\N}$ must have an infinite amount of terms inside one of them.}
    \label{fig:reeb_tirroir}
\end{figure}

\end{proof}

We now consider a non-empty $a\in\overline{\reeb}\setminus\reeb$ and $v=\Tilde{f}(a)$ the corresponding critical value. By Lemma~\ref{lem:morse}, the set of critical values is finite because $\man$ is compact. It remains to check that the  preimage of a critical value is a finite set. 
 
Let $(a_n)_{n\in\N}$ be a sequence of elements in $\reeb$ that converges to $a$. If the first case of Lemma~\ref{lem:3cases} applies, then $(a_n)_{n\in\N}$ must be constant after a certain rank in order to converge because the number of connected components of $\preim{f}{\{v\}}$ is finite (see Proposition~\ref{prop:local_path_con}), which implies that $a\in\reeb$ leading to a contradiction because $a\in\overline{\reeb}\setminus\reeb$.

W.l.o.g. we assume that the second point of Lemma~\ref{lem:3cases} is satisfied for $a$. The following lemma allows to finish the proof of the proposition.

\begin{lemma}\label{lem:critical}
Let $a\in\overline{\reeb}$ such that $v=\Tilde{f}(a)$ is a critical value, and $v^-$ and  $v^+$ defined as in Lemma~\ref{lem:3cases}. Let $(u_n)_{n\in\N}$ and $(v_n)_{n\in\N}$ be two sequences in $[\frac{v^-+v}{2},v)$ such that $u_n\converge v$ and $v_n\converge v$. Let $c$ be a connected component of $\preim{f}{\{\frac{v^-+v}{2}\}}$. Then we have: 
$$\haus(\psi_{u_n-\frac{v^-+v}{2}}(c),\psi_{v_n-\frac{v^-+v}{2}}(c))\converge 0.$$
\end{lemma}

\begin{proof}
W.l.o.g., we assume that $u_n\leq v_n$. 
Let $\eps>0$. Denote $$\man_\eps=\man\setminus\bigcup_{c\in\critic(f)}\ball\left(c,\frac{\eps}{4\left|\critic(f)\right|}\right).$$ 
Since there are no critical values in $[u_n,v_n]$, we can use the gradient flow $(\psi_t)_t$ of $f$ (see Lemma~\ref{lem:flow}) to send any point $p$ in $\psi_{u_n-\frac{v^-+v}{2}}(c)$ to a point $q$ in $\psi_{v_n-\frac{v^-+v}{2}}(c)$ (and conversely) using the curve:
 \begin{align*}
 \gamma\colon[0,v_n-u_n]&\longrightarrow \man\\
 t&\longmapsto\psi_{t+u_n-\frac{v^-+v}{2}}(p),
 \end{align*}
where $\gamma(0)=p$ and $\gamma(v_n-u_n)=q$.\\
Now, for every $c\in\critic(f)$, if $\gamma$ enters $\ball\left(c,\frac{\eps}{4\left|\critic(f)\right|}\right)$, consider 
$$t^c_0=\inf\left\{t\in[0,v_n-u_n],\,\gamma(t)\in \ball\left(c,\frac{\eps}{4\left|\critic(f)\right|}\right)\right\},$$
and 
$$t^c_1=\sup\left\{t\in[0,v_n-u_n],\,\gamma(t)\in \ball\left(c,\frac{\eps}{4\left|\critic(f)\right|}\right)\right\}.$$
Otherwise, take $t^c_0=t^c_1=0$.
Notice that
$$\dis(\gamma(t^c_0),\gamma(t^c_1))\leq \frac{\eps}{2\left|\critic(f)\right|}.$$
 Denoting
 $$\mathcal{U}=[0,v_n-u_n]\setminus\bigcup_{c \in \critic(f)}[t^c_0,t^c_1],$$
 we have $\gamma\left(\mathcal{U}\right)\in\man_\eps$. We also  have:
\begin{align*}
\dis(p,q)&\leq\int_{t\in \mathcal{U}}\left\Vert\frac{d\psi_{t+u_n-\frac{v^-+v}{2}}(p)}{dt}\right\Vert\,dt + \sum_{c \in \critic(f)} \dis(\gamma(t^c_0),\gamma(t^c_1)) \\
&\leq\int_{t\in \mathcal{U}}\frac{dt}{\Vert\nabla f(\psi_{t+u_n-\frac{v^-+v}{2}}(p))\Vert} + \sum_{c \in \critic(f)} \frac{\eps}{2\left|\critic(f)\right|}\\
&\leq\frac{|v_n-u_n|}{\inf_{p\in\man_\eps}\Vert\nabla f(p)\Vert}+ \frac{\eps}{2}.
\end{align*}
Since $v_n-u_n\converge 0$ and $\inf_{p\in\man_\eps}\Vert\nabla f(p)\Vert > C$ for some $C>0$, for $n$ large enough we have $\dis(p,q)\leq\eps$, and finally $\haus(\psi_{u_n-\frac{v^-+v}{2}}(c),\psi_{v_n-\frac{v^-+v}{2}}(c))\converge 0$.

\end{proof}

Let $a\in\overline{\reeb}\setminus\reeb$ which satisfies the second case in Lemma~\ref{lem:3cases}, we thus have $a_{\phi(n)}=\psi_{v_n-\frac{v^-+v}{2}}(c)$ with $c$ being a connected component of $\preim{f}{\{\frac{v^-+v}{2}\}}$ and $v_n\converge v$.  According to Lemma ~\ref{lem:critical}, the limit of $(a_{\phi(n)})_{n\in\N}$ is the same as the limit of $(b_n)_{n\in\N}$ defined as:
$$\forall n\in\N:\,b_n=\begin{cases}
    c&\text{ if } v-\frac{1}{n}\leq\frac{v^-+v}{2},\\
    \psi_{v-\frac{1}{n}}(c)&\text{otherwise}.
\end{cases}$$
Finally, notice that $(b_n)_{n\in\N}$ depends only on $c$ and we showed that it converges to $a$. The number of connected components of $\preim{f}{\{\frac{v^-+v}{2}\}}$ and $\preim{f}{\{\frac{v^++v}{2}\}}$ being finite (see Proposition~\ref{prop:local_path_con}), the set $\left\{a\in\overline{\reeb},\,\Tilde{f}(a)=v\right\}$ is therefore also finite.\\\\
We conclude that $\overline{\reeb}\setminus\reeb$ is finite.
\end{proof}

\begin{proposition}\label{prop:reeb_sep}$(\overline{\reeb},\haus)$ is separable.
\end{proposition}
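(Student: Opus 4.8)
The plan is to obtain separability as an immediate consequence of compactness, using the standard fact that every compact metric space is separable. The key observation is simply that $(\overline{\reeb},\haus)$ is a \emph{compact} metric space. Indeed, since $(\man,\dis)$ is a connected compact Riemannian manifold, the cited proposition (``if $(\spa,\dis)$ is compact then $(\comp(\spa),\haus)$ is also compact'') guarantees that $(\comp(\man),\haus)$ is compact. By Proposition~\ref{prop:reeb_closed} we have $\reeb\subseteq\comp(\man)$, so the closure $\overline{\reeb}$ is naturally taken inside the metric space $(\comp(\man),\haus)$. As a closed subset of a compact space, $\overline{\reeb}$ is itself compact.

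Once compactness of $(\overline{\reeb},\haus)$ is in hand, separability follows by the usual total-boundedness argument: for every $n\in\N$ the space $\overline{\reeb}$ can be covered by finitely many $\haus$-balls of radius $\frac{1}{n}$, and collecting the finitely many centers of these balls over all $n\in\N$ produces a countable set $D\subseteq\overline{\reeb}$. This $D$ is dense, since any $a\in\overline{\reeb}$ lies within Hausdorff distance $\frac{1}{n}$ of some center in the $n$-th net for every $n$. Hence $(\overline{\reeb},\haus)$ is separable.

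I expect no real obstacle here; the only point deserving a line of care is the ambient-space bookkeeping, namely checking that $\overline{\reeb}$ does not escape $\comp(\man)$. This is automatic: since $(\comp(\man),\haus)$ is compact it is in particular complete, so every $\haus$-Cauchy sequence in $\reeb\subseteq\comp(\man)$ converges within $\comp(\man)$, and its limit is therefore again a non-empty compact subset of $\man$ (consistently with Proposition~\ref{prop:reeb_adherence}, which identifies such limits as connected subsets on which $f$ is constant). Thus the closure of $\reeb$ computed in $(\comp(\man),\haus)$ really is a subset of $\comp(\man)$, and the compactness argument applies verbatim. Routing through compactness is the shortest path; one could alternatively exhibit an explicit countable dense family built from the finitely many points of $\overline{\reeb}\setminus\reeb$ (Proposition~\ref{prop:reeb_finite}) together with a dense family in $\reeb$, but this is unnecessary.
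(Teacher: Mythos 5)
Your proof is correct, and it takes a genuinely different route from the paper's. You obtain separability abstractly: $(\comp(\man),\haus)$ is compact by the hyperspace compactness result the paper itself quotes, $\reeb\subseteq\comp(\man)$ by Proposition~\ref{prop:reeb_closed}, so $\overline{\reeb}$ is a closed subset of a compact metric space, hence compact, hence separable by the standard total-boundedness argument. This is consistent with the paper's own earlier remark that $\overline{\reeb}$ is ``compact and as such complete,'' so in a sense you are just drawing the immediate corollary that the authors chose not to. The paper instead gives a constructive proof: it exhibits an explicit countable dense set $E=C\cup\{[x]_{\sim_f},\,x\in D\}$, where $C$ is the finite collection of Reeb elements over critical values and $D$ is a countable dense subset of $\man$, and proves density of $E$ using the Hausdorff-continuity statement of Lemma~\ref{lem:haus_cont} (i.e., that $\pi$ carries convergent sequences in $\man$ to convergent sequences in $(\reeb,\haus)$ away from critical values). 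What your approach buys is brevity and robustness: you need neither Lemma~\ref{lem:haus_cont} nor Proposition~\ref{prop:reeb_finite}, and the argument would survive in any setting where the hyperspace is compact. What the paper's approach buys is an explicit dense family tied to the quotient map, showing that $\pi$ transports a countable dense subset of $\man$ (plus finitely many critical elements) to a dense subset of $\overline{\reeb}$ --- a more informative statement than bare separability. One cosmetic remark: your ``ambient-space bookkeeping'' worry is vacuous, since a closure computed inside the metric space $(\comp(\man),\haus)$ cannot leave that space by definition; the lines you spend on it are harmless but unnecessary.
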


\begin{proof}
On one hand, $f$ is constant on the elements of $\overline{\reeb}$. As such, there is a map $\Tilde{f}_{|\reeb}$, that assigns to each element of $\reeb$ the unique value that $f$ takes on it. 
For each critical value $v$, $\preim{\Tilde{f}_{|\reeb}}{\{v\}}$ is finite. This is because $\preim{f}{\{v\}}$ is compact  and locally path-connected, and a locally path-connected compact space has finitely many connected components. 
Now, there are finitely many critical values of $f$ by Lemma~\ref{lem:morse}. Hence, denoting $C$ the union over all critical values $v$ of $\preim{\Tilde{f}_{|\reeb}}{\{v\}}$, one has that $C$ is finite. 

On the other hand, $\man$ is separable, and therefore there exists a countable subset $D$ of $\man$ such that $\man=\overline{D}$. We now show that the countable subset $E=C\cup \{[x]_{\sim_f},\,x\in D\}$ is dense in $\reeb$. Let $a\in\reeb$. 
\begin{itemize}
	\item If $\Tilde{f}_{|\reeb}(a)$ is a critical value, then $a\in C$.
	\item If $\Tilde{f}_{|\reeb}(a)$ is not a critical value, then let $x\in a$. Given that $x\in\man$ and $D$ is dense in $\man$, there exists $(x_n)_{n\in\N}$ in $D$ such that $x_n\converge x$. Consider the sequence $([x_n]_{\sim_f})_{n\in\N}$, by Lemma~\ref{lem:haus_cont} we have $[x_n]_{\sim_f}\converge a$.
\end{itemize}  

We showed that $E$ is dense in $\reeb$. Therefore, its closure in $\overline{\reeb}$ verifies $\reeb\subseteq\overline{E}$. Hence, $\overline{\reeb}=\overline{E}$.

\end{proof}

Consider the following map:
\begin{align*}
\pi\colon&\man\longrightarrow\overline{\reeb}\\
&x\longmapsto[x]_{\sim_f}.
\end{align*}
The image of $\pi$ is $\reeb$ and it consists of the composition of the natural map associated to the quotient of $\man$ by the equivalence relation $\sim_f$ with the inclusion of $\reeb$ in its closure.
 
 We introduce the notation $\mathcal{F}=\left\{x\in\man,\,f(x)\text{ is a critical value}\right\} $ for the set of points whose image under $f$ is a critical value.

\begin{proposition}\label{prop:pi_cont}  For every measure $\mes$ on $(\man,\dis)$ such that $\mes(\mathcal{F})=0$, one has that
$\pi$ is continuous $\mes$-almost everywhere.
\end{proposition}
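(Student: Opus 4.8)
The plan is to pin down the exact set on which $\pi$ is continuous, namely the set $\man\setminus\mathcal{F}$ of points whose filter value is not a critical value, and then to observe that this set has full measure since $\mes(\mathcal{F})=0$ by hypothesis. The statement then reduces entirely to the claim that $\pi$ is continuous at every $x\in\man$ with $f(x)$ not a critical value.

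Both $\man$ (a Riemannian manifold with its geodesic distance) and $\overline{\reeb}\subseteq\comp(\man)$ are metric spaces, hence first-countable, so continuity of $\pi$ at a point is equivalent to sequential continuity there; this lets me argue purely with sequences. I would fix $x\in\man$ with $f(x)$ not critical and take an arbitrary sequence $(x_n)_{n\in\N}$ in $\man$ with $x_n\converge x$. Setting $a_n:=\pi(x_n)=[x_n]_{\sim_f}$, each $a_n$ lies in $\reeb$ and trivially $x_n\in a_n$. The pair $(a_n)_{n\in\N}$, $(x_n)_{n\in\N}$ therefore satisfies exactly the hypotheses of Lemma~\ref{lem:haus_cont}: $x_n\in a_n$ for all $n$, the sequence $(x_n)_{n\in\N}$ converges to $x$, and $f(x)$ is not a critical value. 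Its conclusion yields $a_n\converge[x]_{\sim_f}=\pi(x)$ for the Hausdorff distance, which is precisely sequential continuity of $\pi$ at $x$. Combining this with the measure statement, $\pi$ is continuous at every point of $\man\setminus\mathcal{F}$, and since $\mathcal{F}$ is $\mes$-null, $\pi$ is continuous $\mes$-almost everywhere.

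In fact there is essentially no remaining obstacle, because Lemma~\ref{lem:haus_cont} already carries the whole analytic weight of the argument (the gradient-flow and cylinder construction coming from Theorem~\ref{thm:morse}). The only conceptual point worth stressing is why the ``almost everywhere'' qualifier cannot be removed: at a point $x$ with $f(x)$ critical, a sequence $[x_n]_{\sim_f}$ with $x_n\converge x$ may converge to an element of $\overline{\reeb}\setminus\reeb$, or to a class different from $[x]_{\sim_f}$, exactly the degeneracy exhibited inside a critical level set in Figure~\ref{fig:reeb_adherence}. Thus $\pi$ can genuinely be discontinuous on $\mathcal{F}$, and controlling the size of $\mathcal{F}$ through the hypothesis $\mes(\mathcal{F})=0$ is precisely what makes continuity hold $\mes$-almost everywhere.
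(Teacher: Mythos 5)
Your proposal is correct and follows essentially the same route as the paper: both reduce the statement to continuity of $\pi$ at every $x$ with $f(x)$ non-critical and then apply Lemma~\ref{lem:haus_cont} to the sequence $a_n=[x_n]_{\sim_f}$, using $\mes(\mathcal{F})=0$ to conclude. Your extra remarks (sequential continuity via first-countability, and the counterexample behavior on $\mathcal{F}$ illustrated by Figure~\ref{fig:reeb_adherence}) are sound but not needed beyond what the paper's one-paragraph proof already contains.
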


\begin{proof}
We will prove that the map $\pi$ is continuous at every point $x\in\man$ such that $f(x)$ is not a critical value. Let $x$ be such a point and let $(x_n)_{n\in\N}$ be a sequence in $\man$ such that $x_n\converge x$. By Lemma~\ref{lem:haus_cont}, one has $$\pi(x_n)=[x_n]_{\sim_f}\converge \pi(x)=[x]_{\sim_f}.$$
\end{proof}

\begin{lemma}\label{lem:zero_measure}
If a measure $\mes$ is absolutely continuous with respect to the volume measure, then $\mes(\mathcal{F})=0$.
\end{lemma}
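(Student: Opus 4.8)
The plan is to reduce the statement to a volume estimate and then show that the preimage of the (finite) set of critical values is $\vol$-negligible. Since $\mes$ is absolutely continuous with respect to $\vol$, every $\vol$-null set is $\mes$-null, so it suffices to prove that $\vol(\mathcal{F})=0$. By the Morse Lemma (Lemma~\ref{lem:morse}) the critical points of $f$ are isolated, and since $\man$ is compact, $\critic(f)$ is finite; consequently the set of critical values $f(\critic(f))$ is finite as well. Writing $\mathcal{F}=\bigcup_v \preim{f}{\{v\}}$ as a \emph{finite} union over the critical values $v$, and using that a finite union of $\vol$-null sets is $\vol$-null, the problem reduces to showing that each level set $\preim{f}{\{v\}}$ over a critical value $v$ satisfies $\vol(\preim{f}{\{v\}})=0$.

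Next I would split this level set into its regular and critical parts, $\preim{f}{\{v\}}=\left(\preim{f}{\{v\}}\setminus\critic(f)\right)\cup\left(\preim{f}{\{v\}}\cap\critic(f)\right)$. The critical part is a subset of the finite set $\critic(f)$, hence $\vol$-null. For the regular part, note that $f$ restricted to the open submanifold $\man\setminus\critic(f)$ is a submersion (its gradient never vanishes there), so $v$ is a regular value of this restriction and the regular part is an embedded $(d-1)$-dimensional submanifold of $\man$. The key step is then to argue that such a hypersurface has volume zero: in any chart the volume measure is absolutely continuous with respect to the Lebesgue measure $\lambda$ on $\R^d$ (with density $\sqrt{\deter(\G)}$, from the local expression of $\vol$), and the image of a $(d-1)$-dimensional submanifold in $\R^d$ has $d$-dimensional Lebesgue measure zero. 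Since $\man\setminus\critic(f)$ is second countable, the regular part is covered by countably many such charts, and $\vol$-nullity is preserved under countable unions.

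Combining the two parts gives $\vol(\preim{f}{\{v\}})=0$ for every critical value $v$; taking the finite union over all critical values yields $\vol(\mathcal{F})=0$, and absolute continuity finally gives $\mes(\mathcal{F})=0$.

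The main obstacle is the hypersurface-has-measure-zero step: one must pass from the intrinsic $(d-1)$-dimensional submanifold structure to $d$-dimensional negligibility in local coordinates, which is precisely where the absolute continuity of $\vol$ with respect to $\lambda$ in charts is used; the presence of critical points on the level set is harmless only because there are finitely many of them. As an alternative to the submersion argument, one could invoke the Morse Lemma directly at each critical point, where $\preim{f}{\{v\}}$ is locally the quadratic cone $\{\sum_{j=1}^{i}x_j^2=\sum_{j=i+1}^{d}x_j^2\}$, a $\lambda$-null set, and then cover the compact level set by finitely many such charts together with submersion charts at the regular points.
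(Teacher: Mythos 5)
Your proposal is correct and follows essentially the same route as the paper's proof: split $\mathcal{F}$ into the finite set $\critic(f)$ and its complement in $\man'=\man\setminus\critic(f)$, apply the Implicit Function Theorem there to exhibit the level sets over critical values as $(d-1)$-dimensional submanifolds, and conclude by nullity of such submanifolds together with absolute continuity. The only difference is that you make explicit two steps the paper leaves implicit — the reduction to $\vol(\mathcal{F})=0$ and the chart-by-chart argument (with density $\sqrt{\deter(\G)}$ and second countability) that a hypersurface is $\vol$-null — which is sound and arguably a useful elaboration, not a different method.
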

\begin{proof}
The set $\critic(f)$ is finite and hence closed. As such, $\man'=\man\setminus\critic(f)$ is an open submanifold of $\man$. Now,
$$\mathcal{F}=\critic(f)\cup \left[\mathcal{F}\cap \man'\right],$$
and by the Implicit Function Theorem (see Theorem 5.8. of~\cite{boothby1986introduction} for example), $\mathcal{F}\cap \man'$ is a finite union of $(d-1)-$dimensional submanifolds in $\man$ as the critical values of $f$ in $\man$ are no longer critical in $\man'$. Hence
$$\mes\left( \mathcal{F}\cap \man'\right)=0.$$
Finally, $\mes(\mathcal{F})=0$.
\end{proof}

\begin{corollary}\label{cor:pi_mes}
$\pi$ is Borel measurable.
\end{corollary}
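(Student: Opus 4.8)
The plan is to write $\man$ as the disjoint union of the open set $A := \man \setminus \mathcal{F}$, on which $\pi$ is continuous, and the closed set $B := \mathcal{F}$, on which $\pi$ takes only finitely many values. I would then invoke the elementary gluing criterion for measurability: if $\man = A \sqcup B$ with $A$ and $B$ Borel and both restrictions $\pi|_A$ and $\pi|_B$ Borel measurable, then $\pi$ is Borel measurable, since for any Borel $C \subseteq \overline{\reeb}$ one has $\preim{\pi}{C} = \preim{(\pi|_A)}{C} \cup \preim{(\pi|_B)}{C}$, a union of two Borel subsets of $\man$.

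First I would check that the two pieces are Borel. The set of critical values of $f$ is finite by Lemma~\ref{lem:morse}, so $\mathcal{F} = \preim{f}{V}$ is the preimage of a finite set $V \subseteq \R$ under the continuous map $f$, hence closed. Thus $A$ is open, $B = \mathcal{F}$ is closed, and both are Borel.

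On $A$, the restriction $\pi|_A$ is continuous: the proof of Proposition~\ref{prop:pi_cont} establishes continuity of $\pi$ at every point $x$ for which $f(x)$ is not a critical value, that is, at every point of the open set $A$, so $\pi|_A$ is continuous and therefore Borel measurable. On $B = \mathcal{F} = \bigcup_{v \in V} \preim{f}{\{v\}}$, each critical level set is compact and locally path connected (Proposition~\ref{prop:local_path_con}), hence splits into finitely many connected components $c_{v,1}, \dots, c_{v,k_v}$. For $x \in c_{v,j}$ one has $\pi(x) = [x]_{\sim_f} = c_{v,j}$, so $\pi|_{\mathcal{F}}$ is constant on each component and takes only finitely many values in total; the preimage of each value is the corresponding component $c_{v,j}$, which is closed in $\man$ by Proposition~\ref{prop:reeb_closed}. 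A function with finite range whose fibers are Borel is Borel measurable, so $\pi|_{\mathcal{F}}$ is Borel measurable.

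Combining the two restrictions through the gluing criterion yields the claim. There is no serious obstacle here; the only points deserving attention are purely bookkeeping, namely that $\pi$ is genuinely constant on each connected component of a critical level set (so that $\pi|_{\mathcal{F}}$ has finite range), and that these components are closed in $\man$ — both already secured by Propositions~\ref{prop:local_path_con} and~\ref{prop:reeb_closed}.
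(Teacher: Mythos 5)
Your proof is correct, and it is worth noting that the paper itself states Corollary~\ref{cor:pi_mes} without any explicit proof, leaving the reader to infer the argument from Proposition~\ref{prop:pi_cont}; your write-up supplies exactly the reasoning that is implicitly intended, namely the decomposition $\man = (\man\setminus\mathcal{F}) \sqcup \mathcal{F}$ with gluing. In fact your proposal does a bit better than the bare corollary-after-proposition presentation: continuity of $\pi$ off the closed set $\mathcal{F}$ alone does \emph{not} imply Borel measurability (a function vanishing off a Cantor set but equal to the indicator of a non-Borel subset on it is continuous off a closed set yet not Borel), so some control of $\pi|_{\mathcal{F}}$ is genuinely needed. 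You identify the right ingredient: by Proposition~\ref{prop:local_path_con} each critical level set $\preim{f}{\{v\}}$ is compact and locally path connected, hence has finitely many connected components, and since the critical values are finite (Lemma~\ref{lem:morse} plus compactness of $\man$), $\pi|_{\mathcal{F}}$ has finite range with fibers that are closed by Proposition~\ref{prop:reeb_closed}, hence is Borel; combined with continuity of $\pi$ at every point of the open set $\man\setminus\mathcal{F}$ (which is what the proof of Proposition~\ref{prop:pi_cont} actually establishes), the elementary two-piece gluing criterion gives the claim. All the supporting facts you cite are indeed available in the paper, so there is no gap; your version is simply a complete account of what the paper leaves implicit.
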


Following Proposition~\ref{prop:reeb_sep} and Corollary~\ref{cor:pi_mes}, we can now consider the metric measure space $(\overline{\reeb},\haus,\vol\circ\pi^{-1})$. Notice that since the image of $\pi$ is $\reeb$, $\vol\circ\pi^{-1}(\reeb)=\vol(\man)$.

 \label{sec:reeb_geom}
\section{Mapper graphs as Metric Measure Spaces}\label{sec:mapper_mms}

Let $\sample_n= \{x_1,\dots,x_n\}\subset \man$ be an $n$-sample taken inside $\man$. In this section, we aim at turning the Mapper $\mapper$ built from $\sample_n$, and constructed using a function $f\colon\,\man\rightarrow\R$ and some cover of its image, as a metric measure space. 

In this article, we restrict the focus to Mapper {\em graphs}, i.e., we only consider the $1$-skeleton of the Mapper complexes. 
An element $\simp$ of $\mapper$ is thus either a $0$-dimensional simplex (vertex) or a $1$-dimensional simplex (edge). See Section~\ref{subsec:reeb_mapper_def}. 

The Mapper graph $\mapper$ can be embedded in $\comp(\man)\cup\{\varnothing\}$ in the following way:

 \begin{itemize}
     \item A vertex $\simp_0$ associated to a cluster $\cluster_{j,k}$ is represented by $$\cluster_{j,k}\setminus \left(\bigcup_{\substack{j'\neq j \\k'\neq k}}\cluster_{j',k'}\right),$$
     \item An edge $\simp_1$ associated to two vertices, and hence to the corresponding two clusters $\cluster_{j,k}$ and $\cluster_{j',k'}$ is represented by $$\cluster_{j,k}\cap \cluster_{j',k'}\setminus\left(\bigcup_{\substack{j''\neq j,j' \\k''\neq k,k'}}\cluster_{j'',k''}\right).$$
 \end{itemize}
 
The elements of $\mapper$ belong to the space of all subsets of $\sample_n$, and are therefore compact subsets of $\spa$. It is possible for some simplex to have the empty set $\varnothing$ to be its representative, e.g., when a cluster is included in its intersection with another cluster. 
Moreover, we chose the representatives of the simplices in $\mapper$ so as to have an element $x_i$ of $\spa_n$ be in one and only one $\simp\in\mapper$. This allows to define a map
 $$\DD\colon\sample_n\longrightarrow\mapper,$$
 that associates $x_i$ to the unique simplex that contains it. 
Using the convention
 $ \haus\left(A,\varnothing\right)=\diam\left(\man\right) $ 
 for every $A\subseteq\man$, it is clear that $\left(\mapper,\haus\right)$ is a metric space.

 \medskip 
 
 Any measure on $\sample_n$, defined as a mixture of Dirac measures on $\man$ centered on each $x_i$, can be pushed forward onto $\mapper$ using $\DD$. For example, denoting the empirical measure by $$\diracn=\frac{1}{n}\sum_{i=1}^n \dirac_{x_i},$$
for some points $x_i$ sampled according to a measure $\mes$ on $\man$, we have that $(\mapper,\haus,\diracn\circ\DD^{-1})$ is a metric measure space.
Notice that the support of $\diracn\circ\DD^{-1}$ is 
 $\mapper\setminus \preim{\DD}{\left\{\varnothing\right\}}.$ 

\section{A bound on the Gromov-Wasserstein distance between the Reeb graph and the Mapper graph}\label{sec:mapper}

Let $\man$ be a compact connected Riemannian manifold of dimension $d$. Let  $f\colon\man\rightarrow\R$ be a Morse function and $\mes$ be a Borel probability measure on $\man$.

We make two assumptions:

 \begin{hypothesis} \label{hypRicci}
    The Ricci curvature $\ricci$ of $\man$ is lower bounded: $$\ricci\geq(d-1)\cdot k  $$
    where $k \in \R$.
\end{hypothesis}

\begin{hypothesis}\label{hypdensity}
	The measure $\mes$ is absolutely continuous with respect to the volume measure $\vol$ and admits an upper bounded density, i.e., $$\sup\frac{d\mes}{d\vol}<+\infty.$$ Furthermore, we assume that $\mes$ is fully supported on $\man$.  
\end{hypothesis}

Note that when $\man$ is a submanifold of an Euclidean space, Assumption~\ref{hypRicci} can be satisfied by assuming that the {\em reach} of $\man$, denoted by $\tau_{\man}$, is positive: $\tau_{\man}>0$ (see Proposition A.1. in~\cite{aamari2019estimating}). Assumptions on the reach of submanifolds are popular in literature (see, e.g., \cite{niyogi2008finding,burgisser2018computing}). 

Let $\sample_n=\{x_1,...,x_n\}$ be an $n$-sample taken in $\man$ with respect to $\mes$ and let $\diracn=\frac{1}{n}\sum_{i=1}^n \dirac_{x_i}$ be the associated empirical measure. In this section, we compare the Reeb graph $\Reeb:=\reeb_f(\man)$ and the Mapper $\mapper$ built on $\sample_n$ according to $\gromov$ metrics.

\subsection{Main result}
First, we specify the Mapper parameters that will be used in our results.
We will assume a {\em homogeneous covering of resolution} $r\in\N$, i.e., the intervals $\mathbb{I}=\{\interv_1,...,\interv_r\}$ that cover the range of the filter $f(\sample_n)$ in the Mapper are all of the same length and consecutive intervals have a percentage $0<g<1/2$ of their length in common. More specifically, we take $\interv_j=[a_j,b_j]$ where:
$$a_j=\min f+(j-1)\cdot\frac{\max f-\min f}{r}-\frac{g}{2-2\cdot g}\cdot\frac{\max f - \min f}{r},$$
$$b_j=\min f+j\cdot\frac{\max f-\min f}{r}+\frac{g}{2-2\cdot g}\cdot\frac{\max f - \min f}{r}.$$

Now, given a set of intervals $\mathbb{I}=\{\interv_1,...,\interv_r\}$ used to cover the range of the filter $f(\sample_n)$ in the Mapper, we define the {\em refinement} of $\mathbb{I}$ as:
$$\mathbb{J}=\left\{\interv_1\setminus\interv_2,\interv_1\cap\interv_2,\interv_2\setminus(\interv_1\cup\interv_3),\interv_2\cap\interv_3,\dots\right\}.$$

The family $\mathbb{J}$ is a refinement of $\mathbb{I}$ in the sense that it corresponds to a collection of elementary blocks of $\mathbb{I}$. Notice also that two elements of $\mathbb{J}$ intersect in at most one point. 

We call {\em maximal width} $\coverwidth$ of the cover $\mathbb{I}$ the maximal length of an element in $\mathbb{J}$. Looking at the definition of $\interv_j=[a_j,b_j]$ above, we see that
$$\coverwidth=\frac{\max f - \min f}{r}.$$ 
Similarly, the {\em minimal width} of an element in $\mathbb{J}$ is given by 
 $\frac{g}{1-g}\cdot\coverwidth.$ 

We now state the main theorem of this article. 
Its proof, as well as explicit constants, are provided in Proposition~\ref{prop:reeb_mapper}, Corollary~\ref{cor:estim} and Proposition~\ref{prop:apprx}.

\begin{theorem}\label{thm:main_result}
Under 
Assumptions~\ref{hypRicci} and~\ref{hypdensity}, and using the definitions above, 
consider the two metric measure spaces $(\overline{\reeb},\haus,\mes\circ\pi^{-1})$ and $(\mapper,\haus,\diracn\circ\DD^{-1})$ corresponding to the Reeb graph and to the Mapper graph respectively.  
For any $\alpha>0$, $p \geq 1$ and a resolution $r(n)$ of the Mapper satisfying $r(n)\underset{n\rightarrow\infty}{\sim}n^{\frac{1}{d+\alpha}}$, we then have, for any large enough sample size $n$:
 $$\mathbb{E}\left(\gromov_p(\overline{\reeb},\mapper)\right)\lesssim n^{-\frac{\nu}{d+\alpha}} $$
 where $$\nu=\min\left\{\frac{1}{2},\frac{d}{p(d+1)}\right\}.$$ 
\end{theorem}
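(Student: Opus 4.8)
The plan is to control $\gromov_p(\overline{\reeb},\mapper)$ by a \emph{bias--variance} style decomposition, separating the error coming from the finite resolution $r$ of the cover (a deterministic approximation term) from the error coming from replacing the sampling measure $\mes$ by the empirical measure $\diracn$ (a statistical estimation term). Concretely, I would introduce an intermediate metric measure space---the \emph{population Mapper} $\mathbb{M}$, i.e.\ the nerve of the pullback cover of all of $\man$ (rather than of the finite sample $\sample_n$), equipped with the pushforward of $\mes$ under a projection $g$ and with the Hausdorff metric inherited from $\comp(\man)$---and apply the triangle inequality for $\gromov_p$,
\[
\gromov_p(\overline{\reeb},\mapper)\le \gromov_p(\overline{\reeb},\mathbb{M})+\gromov_p(\mathbb{M},\mapper).
\]
This is the content I would place in Proposition~\ref{prop:reeb_mapper}; the first summand is then bounded by the approximation estimate (Proposition~\ref{prop:apprx}) and the second by the estimation estimate (Corollary~\ref{cor:estim}).

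For the approximation term $\gromov_p(\overline{\reeb},\mathbb{M})$, I would build an explicit coupling by matching each node of $\mathbb{M}$ (a connected component of $\preim{f}{\interv_j}$) to a Reeb point (a connected component of the central level set of $\interv_j$) contained in it, transporting the shared mass along this correspondence. The key geometric input is the Morse cylinder structure of Theorem~\ref{thm:morse}: away from critical values the gradient flow trivialises $\preim{f}{\interv_j}$ as a product over an interval of filter-length $\coverwidth=(\max f-\min f)/r$, so the Hausdorff distance between a node and its matched Reeb point is of order $\coverwidth$, up to the factor $\sup 1/\Vert\nabla f\Vert$ produced by the flow. Finitely many critical values (Lemma~\ref{lem:morse}) break this product structure, but their level sets carry no mass (Lemma~\ref{lem:zero_measure}) and only finitely many adherent points are added when passing to $\overline{\reeb}$ (Proposition~\ref{prop:reeb_finite}), so their contribution can be absorbed. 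This should yield $\gromov_p(\overline{\reeb},\mathbb{M})\lesssim \coverwidth\sim 1/r$.

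For the estimation term $\gromov_p(\mathbb{M},\mapper)$, both spaces embed isometrically into $(\comp(\man),\haus)$, so Proposition~\ref{prop:gw_def} reduces it to a genuine Wasserstein distance $\gromov_p(\mathbb{M},\mapper)\le \wasser_p(\mes\circ g^{-1},\diracn\circ\DD^{-1})$ between the pushforwards of $\mes$ and $\diracn$. I would then invoke the mean Wasserstein convergence rate of the empirical measure from \cite{weed2019sharp}, whose exponent is governed by the metric (covering) dimension of the measure. Here Assumption~\ref{hypRicci} and the Bishop--Cheeger--Gromov comparison (Theorem~\ref{thm:bishop_gromov}), together with the density bound of Assumption~\ref{hypdensity}, provide uniform volume bounds on geodesic balls that pin this dimension to $d$ (up to the arbitrary slack $\alpha$); full support also guarantees, for $n$ large relative to $r$, that the sample is dense enough in each slab for $\mapper$ to reproduce the combinatorial structure and node representatives of $\mathbb{M}$ up to a controlled Hausdorff error. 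The two regimes of the rate in \cite{weed2019sharp}---the $n^{-1/2}$ saturation in low effective dimension versus the genuinely $d$-dimensional rate---are what I expect to produce the two competing exponents in $\nu=\min\{1/2,\,d/(p(d+1))\}$.

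Finally I would assemble the two bounds, $\mathbb{E}\,\gromov_p(\overline{\reeb},\mapper)\lesssim 1/r+(\text{estimation rate in }n,r)$, and optimise the free parameter: the choice $r(n)\sim n^{1/(d+\alpha)}$ balances the resolution bias $1/r$ against the statistical fluctuation while keeping the per-slab sample size $n/r\to\infty$ (so the structural hypothesis above holds), delivering the announced $n^{-\nu/(d+\alpha)}$. \textbf{The main obstacle is the estimation term}---specifically, identifying the correct effective dimension of the pushforward measure in the Hausdorff metric and controlling its interplay with the resolution $r$, which simultaneously fixes the number of nodes, the per-node sample size, and the node-representative Hausdorff error. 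Making \cite{weed2019sharp} applicable in this non-Euclidean, resolution-dependent setting, and tracking how the factor $d+1$ and the $\min$ emerge from combining the mass-fluctuation error with the node-localisation error and the volume comparison, is where the real work lies; the approximation term, by contrast, is an essentially deterministic Morse-theoretic computation.
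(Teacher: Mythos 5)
Your overall architecture (triangle inequality through an intermediate mm-space, Weed--Bach for the statistical term, Morse-theoretic flow estimates for the resolution term) matches the paper's, but the decomposition you chose is not the paper's, and it creates the very gap you flag. The paper's intermediate space is \emph{not} a population Mapper but the empirical Reeb graph $\overline{\reeb}_n=(\overline{\reeb},\haus,\diracn\circ\pi^{-1})$: this makes the estimation term $\estim_n=\wasser_p(\mes\circ\pi^{-1},\diracn\circ\pi^{-1})$ a distance between a measure and \emph{its own} empirical measure on the fixed Polish space $(\overline{\reeb},\haus)$ (since $[x_i]_{\sim_f}$ are i.i.d.\ from $\mes\circ\pi^{-1}$), which is exactly the setting of Proposition 5 of \cite{weed2019sharp}. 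Your term $\wasser_p(\mes\circ g^{-1},\diracn\circ\DD^{-1})$ is not of this form: the two measures are pushforwards under \emph{different} maps onto different spaces (one of them random, with data-dependent node representatives), so \cite{weed2019sharp} does not apply, and repairing this forces the sample-vs-Mapper comparison back inside the estimation term --- precisely what the paper instead isolates as a per-point coupling term $\appr_n=\bigl(\frac{1}{n}\sum_{i=1}^n\haus([x_i]_{\sim_f},\DD(x_i))^p\bigr)^{1/p}$, using the explicit coupling $\diracn\circ\phi^{-1}$ with $\phi(x_i)=([x_i]_{\sim_f},\DD(x_i))$ and $\haus$ on $\comp(\man)$ as metric coupling (Proposition~\ref{prop:reeb_mapper}). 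A further inaccuracy: the covering exponent of $(\reeb,\haus)$ is $2d$, not $d$ (Proposition~\ref{prop:covn}), because $\Vert\nabla f\Vert$ degenerates linearly near critical points; the final rate survives only because $s$ in Weed--Bach may be taken arbitrarily above $\max\{2d,2p\}$, e.g.\ $s=(d+\alpha)/\nu$.

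Your treatment of the approximation term also fails quantitatively, and this matters because it misattributes both regimes of $\nu$. The flow estimate gives displacement $\lesssim\coverwidth/\inf\Vert\nabla f\Vert$, and on refined cover elements adjacent to a critical value the Morse model yields $\Vert\nabla f(p)\Vert\gtrsim\sqrt{|f(p)-f(c)|}\gtrsim\sqrt{\coverwidth}$, so the correct per-point Hausdorff error is of order $\sqrt{\coverwidth}$, not $\coverwidth$ (Lemmas~\ref{lem:delta} and~\ref{lem:slice}). Cover elements actually containing critical values incur a diameter-size error, controlled not by geometry but by their \emph{mass}: the coarea formula plus the ball-volume asymptotics give $\max_{\mathrm{J}\in\mathbb{J}}\vol(\preim{f}{\mathrm{J}})\lesssim\coverwidth^{d/(d+1)}$ (Lemma~\ref{lem:meas}), and the density bound of Assumption~\ref{hypdensity} converts this to probability; a third, exponentially small term accounts for the sample failing to be $\lambda$-dense, via the $(a,b)$-standard bound (Theorem~\ref{thm:ab}) --- so the approximation error is random, not deterministic as you assert. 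Consequently $\mathbb{E}(\appr_n^p)\lesssim\coverwidth^{p/2}+\coverwidth^{d/(d+1)}$, and \emph{both} exponents in $\nu=\min\{1/2,\,d/(p(d+1))\}$ arise from this approximation term (the second after taking the $p$-th root), not from the two regimes of \cite{weed2019sharp} as you conjecture; the estimation rate is then merely tuned to match. With your claimed bound $\gromov_p(\overline{\reeb},\mathbb{M})\lesssim 1/r$, the balancing step could not reproduce the stated $\nu$.
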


\begin{remark}We give a discussion on the optimality of this bound. When $\frac{d}{p(d+1)}\leq\frac{1}{2}$, the upper bound given in Theorem~\ref{thm:main_result} is achieved by the bound on $\max_{\mathrm{J}\in\mathbb{J}}\vol\left( \preim{f}{\mathrm{J}}\right)$ given in Lemma~\ref{lem:meas}, see the proofs of Proposition~\ref{prop:apprx} and Theorem~\ref{thm:main_result} for details. Note moreover that this upper bound  is sharp. To see this, consider for instance the case where $\man$ is the graph of the function $x\mapsto x^2$ over the segment $[-1,1]$: $\man=\{(x,x^2),\,x\in[-1,1]\}$, with the metric $\g$ being induced by the Euclidean metric in $\R^2$. We are interested in $\vol\left( \preim{f}{[0,\coverwidth]}\right)$. We have:
\begin{align*}
\vol\left( \preim{f}{[0,\coverwidth]}\right)&=\int_{-\sqrt{\coverwidth}}^{\sqrt{\coverwidth}}\sqrt{1+4t^2}\,dt\\
&=\frac{1}{4}\left[t\sqrt{1+t^2}+\log\left(t+\sqrt{1+t^2}\right)\right]^{2\sqrt{\coverwidth}}_{-2\sqrt{\coverwidth}}\\
&\underset{r\rightarrow\infty}{\sim}2\sqrt{\coverwidth}=2\coverwidth^{\frac{d}{d+1}}.
\end{align*}
Note that in this example $\man$ has a boundary. We can circumvent this if needed by, for example, attaching the two points in the boundary in a smooth manner. This will not change our argument since $\preim{f}{[0,\coverwidth]}$ is located away from the boundary.
\end{remark}

\begin{proof}
From Proposition~\ref{prop:reeb_mapper}, we have the decomposition
$$\gromov_p(\overline{\reeb},\mapper)\leq \estim_n+\appr_n.$$

On one hand, since $\frac{d+\alpha}{\nu}>\max\{2d,2p\}$, Corollary~\ref{cor:estim} shows that
$$\mathbb{E}\left(\estim_n\right)\lesssim n^{-\frac{\nu}{d+\alpha}}.$$

On the other hand, Proposition~\ref{prop:apprx} gives
$$\mathbb{E}\left(\appr_n^p\right)\leq \left(\lambda+\xi\cdot\sqrt{\coverwidth}\right)^p
    +\tilde{\eta}\cdot\coverwidth^{\frac{d}{d+1}}+\frac{\zeta}{\lambda^d}\cdot\exp\left(\frac{-na\lambda^d}{2^d}\right),$$
    for every $\lambda>0$ such that
$$\omega_f\left(\lambda\right)\leq\frac{g}{1-g}\cdot\frac{\coverwidth}{4}.$$
where the constants $\xi$, $\tilde{\eta}$ and $\zeta$ are given in Proposition~\ref{prop:apprx}. The third term, that is given by 
$$\frac{\zeta}{\lambda^d}\cdot\exp\left(\frac{-na\lambda^d}{2^d}\right),$$
converges to zero for   $\lambda(n) \gtrsim n^{-\frac{1}{d+\alpha}}$.
Furthermore, since $f$ is smooth on a compact manifold, it is $L$-Lipschitz for some $L>0$. 
In Proposition~\ref{prop:apprx}, the assumption
$$\omega_f\left(\lambda\right)\leq\frac{g}{1-g}\cdot\frac{\coverwidth}{4},$$
where $\omega_f$ is the modulus of continuity of $f$, can be satisfied by choosing
$$\lambda=\frac{g}{1-g}\cdot\frac{\coverwidth}{4L}.$$
Therefore, in order to make the term above converge to zero (exponentially fast), a sufficient condition is
$$\coverwidth(n)\underset{n\rightarrow\infty}{\sim} n^{-\frac{1}{d+\alpha}},$$
or equivalently
$$r(n)\underset{n\rightarrow\infty}{\sim} n^{\frac{1}{d+\alpha}}.$$
Moreover, the two other terms in Proposition~\ref{prop:apprx} are dominated by $\coverwidth^{p\nu}$.
As such, 
$\mathbb{E}\left(\appr_n^p\right) \lesssim n^{-\frac{p\nu}{d+\alpha}},$
and therefore
$$\mathbb{E}\left(\appr_n\right)\lesssim n^{-\frac{\nu}{d+\alpha}}$$
and finally
$$\mathbb{E}\left(\gromov_p(\overline{\reeb},\mapper)\right)\lesssim n^{-\frac{\nu}{d+\alpha}}.$$
\end{proof}

\subsection{Approximation-Estimation Decomposition}
\begin{proposition}\label{prop:reeb_mapper}
 Consider the two metric measure spaces $(\overline{\reeb},\haus,\mes\circ\pi^{-1})$ and $(\mapper,\haus,\diracn\circ\DD^{-1})$ corresponding to the Reeb graph and to the Mapper graph respectively. We have:
 $$\gromov_p(\overline{\reeb},\mapper)\leq \wasser_p(\mes\circ\pi^{-1},\diracn\circ\pi^{-1})+\left(\frac{1}{n}\sum_{i=1}^n\haus([x_i]_{\sim_f},\DD(x_i))^p\right)^{\frac{1}{p}}.$$
 \end{proposition}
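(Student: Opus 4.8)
The plan is to exploit a feature of the construction that makes the Gromov-Wasserstein distance collapse to an ordinary Wasserstein distance: both mm-spaces are, by design, subspaces of one and the same ambient metric space. Indeed, every element of $\overline{\reeb}$ and of the support of $\diracn\circ\DD^{-1}$ is a nonempty compact subset of $\man$, and on both graphs the metric is the Hausdorff distance $\haus$ inherited from $\man$. Hence the inclusions $\supp(\mes\circ\pi^{-1})\hookrightarrow\comp(\man)$ and $\supp(\diracn\circ\DD^{-1})\hookrightarrow\comp(\man)$ are isometric embeddings into the common space $(\comp(\man),\haus)$, which is compact (since $\man$ is), hence Polish. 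Here $\mes\circ\pi^{-1}$ is a well-defined Borel probability measure because $\pi$ is Borel measurable (Corollary~\ref{cor:pi_mes}) and $\mes(\mathcal F)=0$ under Assumption~\ref{hypdensity} (Lemma~\ref{lem:zero_measure}), and the support of $\diracn\circ\DD^{-1}$ avoids the symbol $\varnothing$ since each $\DD(x_i)$ contains $x_i$. Applying the characterization of Proposition~\ref{prop:gw_def} with this \emph{specific} pair of embeddings (in place of the infimum) yields at once
$$\gromov_p(\overline{\reeb},\mapper)\leq \wasser_p\left(\mes\circ\pi^{-1},\,\diracn\circ\DD^{-1}\right),$$
the right-hand Wasserstein distance being computed in $(\comp(\man),\haus)$.

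With the Gromov-Wasserstein distance reduced to a single Wasserstein distance, the remainder is routine transport bookkeeping. First I would introduce the intermediate measure $\diracn\circ\pi^{-1}$ and invoke the triangle inequality for $\wasser_p$ — valid since all three measures live on the compact, finite-diameter space $\comp(\man)$ — to obtain
$$\wasser_p\left(\mes\circ\pi^{-1},\,\diracn\circ\DD^{-1}\right)\leq \wasser_p\left(\mes\circ\pi^{-1},\,\diracn\circ\pi^{-1}\right)+\wasser_p\left(\diracn\circ\pi^{-1},\,\diracn\circ\DD^{-1}\right).$$
The first summand is exactly the estimation term appearing in the statement. For the second, note that $\diracn\circ\pi^{-1}=\frac1n\sum_{i=1}^n\dirac_{[x_i]_{\sim_f}}$ and $\diracn\circ\DD^{-1}=\frac1n\sum_{i=1}^n\dirac_{\DD(x_i)}$ are $n$-atom empirical measures sharing a common index set, so the index-matching plan $\gamma=\frac1n\sum_{i=1}^n\dirac_{([x_i]_{\sim_f},\,\DD(x_i))}$ has the correct marginals. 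Bounding the optimal transport cost by the cost of $\gamma$ gives
$$\wasser_p\left(\diracn\circ\pi^{-1},\,\diracn\circ\DD^{-1}\right)^p\leq \int \haus(a,b)^p\,d\gamma(a,b)=\frac1n\sum_{i=1}^n\haus\left([x_i]_{\sim_f},\DD(x_i)\right)^p,$$
and taking $p$-th roots produces the approximation term. Chaining the three displays finishes the proof.

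I do not expect a genuine obstacle here: the heavy lifting was already carried out in equipping the Reeb and Mapper graphs with their mm-space structures. The one step that deserves care is the reduction from $\gromov_p$ to $\wasser_p$, where one must verify that the two maps are truly isometric embeddings of the \emph{supports} into a common metric space — which holds precisely because both graphs were built inside $\comp(\man)$ under the very same Hausdorff metric — and that the pushforward $\mes\circ\pi^{-1}$ is a legitimate Borel measure. Everything after that reduction is elementary, using only the metric property of $\wasser_p$ and the explicit diagonal coupling; no compactness or convergence argument from the earlier sections is needed beyond ensuring the ambient space is Polish.
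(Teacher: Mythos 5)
Your proof is correct, and it reuses the same two key ingredients as the paper — the observation that both graphs sit isometrically inside the common ambient space $(\comp(\man),\haus)$, and the index-matching coupling $\frac1n\sum_{i=1}^n\dirac_{([x_i]_{\sim_f},\,\DD(x_i))}$ — but it arranges them differently. The paper introduces the intermediate mm-space $\overline{\reeb}_n=(\overline{\reeb},\haus,\diracn\circ\pi^{-1})$ and splits at the Gromov-Wasserstein level, writing $\gromov_p(\overline{\reeb},\mapper)\leq\gromov_p(\overline{\reeb},\overline{\reeb}_n)+\gromov_p(\overline{\reeb}_n,\mapper)$, then bounds the first term by $\wasser_p(\mes\circ\pi^{-1},\diracn\circ\pi^{-1})$ (same underlying space, identity embedding) and the second via the metric coupling given by $\haus$ on $\overline{\reeb}\sqcup\mapper\subseteq\comp(\man)$ together with the diagonal transport plan. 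You instead collapse $\gromov_p(\overline{\reeb},\mapper)$ to a single Wasserstein distance $\wasser_p(\mes\circ\pi^{-1},\diracn\circ\DD^{-1})$ in $(\comp(\man),\haus)$ via Proposition~\ref{prop:gw_def}, and only then split using the triangle inequality for $\wasser_p$. Your ordering has a small technical advantage: it requires only the metric property of $\wasser_p$ on a fixed Polish space, which is standard for all $p\geq 1$, whereas the paper's decomposition implicitly uses the triangle inequality for $\gromov_p$ at general $p$, which the paper formally records only for $p=2$ (Proposition~\ref{prop:gw_met}, after~\cite{sturm2006geometry}); conversely, the paper's route makes the estimation term $\estim_n$ appear as a self-contained $\gromov_p$ bound between two mm-structures on the same Reeb graph, which is conceptually tidy. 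You were also right to flag the two points of care — that Proposition~\ref{prop:gw_def} embeds \emph{supports} (so the convention $\haus(A,\varnothing)=\diam(\man)$ is harmless, since empty representatives carry no mass and each $\DD(x_i)$ contains $x_i$), and that $\mes\circ\pi^{-1}$ is Borel by Corollary~\ref{cor:pi_mes} — and nothing else is missing.
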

 
 \begin{proof}
 Consider the metric measure space $\overline{\reeb}_n=(\overline{\reeb},\haus,\diracn\circ\pi^{-1})$ obtained with the empirical measure on the Reeb graph.
 
 On one hand,
$$\gromov_p(\overline{\reeb},\overline{\reeb}_n)\leq\wasser_p(\mes\circ\pi^{-1},\diracn\circ\pi^{-1}).$$
On the other hand, 
$$\gromov_p^p(\overline{\reeb}_n,\mapper)\leq \inf_{\eta}\int_{\overline{\reeb}\times\mapper}\haus(a,\simp)^p\,d\eta(a,\simp),$$
 where the infimum is taken over all coupling measures $\eta$ on $\overline{\reeb}\times\mapper$ that have marginals $\diracn\circ\pi^{-1}$ and $\diracn\circ\DD^{-1}$. Indeed, the Hausdorff distance $\haus$ can be defined on $\overline{\reeb}\sqcup\mapper$ via its inclusion in $\comp(\man)$, and is thus a metric coupling of $\haus$ restricted to $\overline{\reeb}$ and $\haus$ restricted to $\mapper$. Furthermore, consider the map
 \begin{align*}
     \phi\colon\sample_n&\longrightarrow\overline{\reeb}\times\mapper\\
     x_i&\longmapsto([x_i]_{\sim_f},\DD(x_i))
 \end{align*}
 It is clear that $\diracn\circ\phi^{-1}$ is an example of such a coupling measure $\eta$. Hence,
 \begin{align*}
    \gromov_p^p(\overline{\reeb}_n,\mapper)&\leq \int_{\overline{\reeb}\times\mapper}\haus(a,\simp)^p\,d\diracn\circ\phi^{-1}(a,\simp) \\
    &=\int_{\sample_n}\haus([x]_{\sim_f},\DD(x))^p\,d\diracn(x)\\
    &=\frac{1}{n}\sum_{i=1}^n\haus([x_i]_{\sim_f},\DD(x_i))^p.
 \end{align*}
Finally,
  \begin{align*}
    \gromov_p(\overline{\reeb},\mapper)&\leq \gromov_p(\overline{\reeb},\overline{\reeb}_n)+\gromov_p(\overline{\reeb}_n,\mapper)\\
    &\leq\wasser_p(\mes\circ\pi^{-1},\diracn\circ\pi^{-1})+\left(\frac{1}{n}\sum_{i=1}^n\haus([x_i]_{\sim_f},\DD(x_i))^p\right)^{\frac{1}{p}}.
 \end{align*}
 \end{proof}
  The bound given in Proposition~\ref{prop:reeb_mapper} contains two terms:
 \begin{enumerate}
     \item $\estim_n=\wasser_p(\mes\circ\pi^{-1},\diracn\circ\pi^{-1})$, and
     \item $\appr_n=\left(\frac{1}{n}\sum_{i=1}^n\haus([x_i]_{\sim_f},\DD(x_i))^p\right)^{\frac{1}{p}}$.
 \end{enumerate}
 
The term $\estim_n$ can be interpreted as an estimation error. It measures how representative the discrete measure associated to the $n$-sample $\sample_n$ is of the measure $\mes$, in terms of the distance in the Reeb graph. The term $\appr_n$ can be interpreted as an approximation error. It measures how well the Mapper graph captures the same stratification than the one in the Reeb graph. It is small if, for most of the sample points $x_i$, the simplex containing $x_i$ is close (in Hausdorff distance) from the Reeb graph element that contains $x_i$.

 \subsection{The estimation error \texorpdfstring{$\estim_n$}{TEXT} }
 
 \subsubsection{Covering numbers for Reeb graphs}
 
 Following the works of \cite{BoissardLeGouic2014} and \cite{weed2019sharp}, in this section, we aim at bounding the estimation error $\estim_n$ using the $\eps$-covering number of $(\reeb,\haus)$.
 
 \begin{definition}
 Given a metric space $\spa$, $S\subseteq\spa$ and $\eps>0$, the $\eps$-covering number of $S$, $\covn_\eps(S)$, is the minimum integer $m$ such that there exist balls $B_1,\dots,B_m$ in $\spa$ of radius $\eps$ such that $S\subseteq\bigcup_{i=1}^m B_i$.
 \end{definition}
 
 Note that we have $\covn_\eps(\reeb)=\covn_\eps(\bar \reeb)$, since $\reeb\subseteq\bar\reeb$ and any $\eps$-covering of $\reeb$ is also an $\eps$-covering of $\bar\reeb$, as $\bar\reeb$ is the minimal closed subset that contains $\reeb$.
 
 \begin{proposition}\label{prop:covn}
There exists $\eps'>0$ such that for every $\eps\leq\eps'$ we have
  $$\covn_\eps(\reeb)\leq N_c+\left(\frac{\beta}{\eps}\right)^{2d},$$
  where $N_c$ is the number of elements in $\reeb$ associated to critical values, $d$ is the dimension of $\man$ and $\beta$ is a constant depending on $f$ and $\man$. 
 \end{proposition}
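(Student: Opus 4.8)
The plan is to split $\reeb$ according to whether the (constant) filter value of an element is a critical value of $f$, and to cover the two parts separately. By Lemma~\ref{lem:morse} the set of critical values is finite, and by Proposition~\ref{prop:local_path_con} each critical level set has finitely many connected components; hence there are exactly $N_c$ elements of $\reeb$ lying over critical values, and covering each of them by a single $\eps$-ball accounts for the additive term $N_c$. It then remains to cover the elements lying over the (finitely many) open inter-critical intervals, for which I would build an explicit $\eps$-net using the gradient flow $\psi_{\cdot}$ of $f$ introduced in Lemma~\ref{lem:flow}.

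First I would establish a quantitative modulus-of-continuity estimate for the edges of $\reeb$ in the Hausdorff metric. Fix an inter-critical interval, a base value $w$ in it, and a connected component $c$ of $\preim{f}{\{w\}}$; for two values $s<t$ in the same interval the flow restricts to a diffeomorphism carrying $\psi_{s-w}(c)$ onto $\psi_{t-w}(c)$. Adapting the argument in the proof of Lemma~\ref{lem:critical} (excising balls of radius $\sim\eps$ around the critical points, estimating the flow displacement by $\int 1/\|\nabla f\|$ away from them and by the ball diameters near them, on the set $\man_\eps$), I would obtain
$$\haus\bigl(\psi_{s-w}(c),\psi_{t-w}(c)\bigr)\ \le\ \frac{|s-t|}{\inf_{\man_\eps}\|\nabla f\|}+\frac{\eps}{2}.$$
Since $f$ is Morse, $\nabla f$ vanishes at most linearly at its nondegenerate critical points, so on the complement $\man_\eps$ of balls of radius $\sim\eps$ one has $\inf_{\man_\eps}\|\nabla f\|\gtrsim\eps$. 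Consequently, whenever $|s-t|\lesssim\eps^2$ the right-hand side is at most $\eps$: an edge of the Reeb graph is Hölder-$\frac12$ in the filter value, and moving the value by $\sim\eps^2$ moves the corresponding element by at most $\eps$ in $\haus$.

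I would then assemble the net by choosing reference filter values spaced $\sim\eps^2$ apart across $[\min f,\max f]$, inserting one on each side of every critical value, and taking at each reference value the finitely many connected components of its level set as net points. The number of components per level is bounded uniformly in $\eps$, so this produces $O(\eps^{-2})$ net points, and by the previous estimate every non-critical element of $\reeb$ lies within $\eps$ of one of them. Hence $\covn_\eps(\reeb)\le N_c+O(\eps^{-2})$, and since $\eps\le\beta$ gives $\beta/\eps\ge1$ and $2\le 2d$, this is at most $N_c+(\beta/\eps)^{2d}$ for a suitable constant $\beta$ depending only on $f$ and $\man$; in fact the argument yields the sharper exponent $2$. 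The main obstacle is precisely the modulus estimate near critical values, where $\|\nabla f\|\to 0$ and the naive arc-length bound $\int 1/\|\nabla f\|$ diverges: controlling the Hausdorff displacement there forces the ball-excision trick, trading an uncontrolled integrated speed for the small ($\sim\eps$) diameter of the excised neighborhoods, and one must check that stepping only $\sim\eps^2$ in value never silently crosses a critical value, which is why reference values are doubled around each critical value.
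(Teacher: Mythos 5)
Your proof is correct, but it takes a genuinely different route from the paper's in the key counting step, and it is worth spelling out the difference. The paper covers the $d$-dimensional inter-critical cylinders $S_{j,l}\subseteq\man$ by geodesic balls of radius $e\sim\eps\cdot\inf_{\man_\eps}\Vert\nabla f\Vert/\big(2\sup_{\man}\Vert\nabla f\Vert\big)\sim\eps^2$, proves an assertion saying that $\pi$ maps each such ball (intersected with $S_{j,l}$) into an $\eps$-ball of $(\reeb,\haus)$, and then counts the balls via the Bishop--Cheeger--Gromov volume comparison (Theorem~\ref{thm:bishop_gromov}, hence Assumption~\ref{hypRicci}), which yields $\covn_e(S_{j,l})\lesssim e^{-d}\sim\eps^{-2d}$ and the exponent $2d$. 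You instead exploit the one-dimensionality of $\reeb$: within a cylinder an element is determined by its filter value alone, so you only discretize the filter direction, using the excision estimate of Lemma~\ref{lem:critical} together with the linear vanishing of $\nabla f$ at nondegenerate critical points (the same two core ingredients the paper uses, recombined) to get a H\"older-$\frac12$ modulus for $t\mapsto\psi_{t-w}(c)$, and then $O(\eps^{-2})$ reference values suffice, with the doubling of reference values around critical levels correctly preventing a step from crossing a critical value. This buys you a sharper bound $N_c+O(\eps^{-2})$ with a dimension-free exponent, from which the stated $N_c+(\beta/\eps)^{2d}$ follows trivially for $\eps'\leq 1$; it also avoids the volume comparison entirely, so Assumption~\ref{hypRicci} is not needed for this proposition. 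As a side benefit your bound shows the covering dimension of $(\reeb,\haus)$ is at most $2$, so in Corollary~\ref{cor:estim} the condition $s>\max(2d,2p)$ could be relaxed to $s>2p$, improving the estimation-error term (though the approximation error $\appr_n$ would still carry the dependence on $d$ in Theorem~\ref{thm:main_result}). The one point to state carefully in a full write-up is the uniform bound on the number of components per level, which follows, as you indicate, from the flow-invariance of the component count within each of the finitely many inter-critical bands (Proposition~\ref{prop:local_path_con}).
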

 
  \begin{proof}
 First, let $\eps>0$ and  take $N_c$ balls $\ball_1,\dots,\ball_{N_c}\subseteq \reeb$ of radius $\eps$, one around each element of $\reeb$ associated to a critical value. 

 Second, let $\{v_1,\dots,v_{k+1}\}$ be the critical values of $f$, and let
 $$\man_\eps :=\man\setminus \bigcup_{c \in \critic(f)}\ball\left(c,\frac{\eps}{4\left|\critic(f)\right|}\right).$$
 For $1\leq j\leq k$, let the sets $\{S_{j,l}\}_{j,l}$ 
 denote the connected components of $\man\cap \preim{f}{(v_j,v_{j+1})}$. Notice that there are no critical points in any of the $S_{j,l}$.
 
 We start with the next technical result that connect balls of $(\man,d)$ with those of $(\reeb,\haus)$.
 
 \begin{assertion*}
 For every $\eps>0$, every $(j,l)$ and every $x\in S_{j,l}$:
 $$\ball\left(x,\frac{\eps\cdot \inf_{p\in\man_\eps}\Vert\nabla f(p)\Vert}{2\sup_{p\in\man}\Vert\nabla f(p)\Vert}\right)\cap S_{j,l}\subseteq \preim{\pi}{\ball\left([x]_{\sim_f},\eps\right)}.$$
 \end{assertion*}
 
 \begin{proof}
 Let $y\in \ball\left(x,\frac{\eps\cdot \inf_{p\in\man_\eps}\Vert\nabla f(p)\Vert}{2\sup_{p\in\man}\Vert\nabla f(p)\Vert}\right)\cap S_{j,l}$. W.l.o.g., we assume that $f(y)>f(x)$.
 
From Lemma~\ref{lem:flow}, we can introduce the gradient flow $(\psi_t)_t$ 
of $f$, such that  $\psi_{f(y)-f(x)}$  is a diffeomorphism between $\preim{f}{\{f(x)\}}$ and $\preim{f}{\{f(y)\}}$. Moreover, since $x$ and $y$ are both in $S_{j,l}$, then $\psi_{f(y)-f(x)}$ is in particular a diffeomorphism between $[x]_{\sim_f}$ and $[y]_{\sim_f}$. Thus, for any point $p$ in $[x]_{\sim_f}$, there exists a point $q$ in $[y]_{\sim_f}$ (and conversely) such that the curve
 \begin{align*}
 \gamma\colon[0,f(y)-f(x)]&\longrightarrow \man\\
 u&\longmapsto\psi_{u+f(x)}(p)
 \end{align*}
connect $p = \gamma(0)$ to $q := \gamma(f(y)-f(x))$.

Now, for every $c\in\critic(f)$, if $\gamma$ enters $\ball\left(c,\frac{\eps}{4\left|\critic(f)\right|}\right)$ consider 
$$u^c_0=\inf\left\{u\in[0,f(y)-f(x)],\,\gamma(u)\in \ball\left(c,\frac{\eps}{4\left|\critic(f)\right|}\right)\right\},$$
and 
$$u^c_1=\sup\left\{u\in[0,f(y)-f(x)],\,\gamma(u)\in \ball\left(c,\frac{\eps}{4\left|\critic(f)\right|}\right)\right\}.$$
Otherwise take $u^c_0=u^c_1=0$.
Notice that
$$\dis(\gamma(u^c_0),\gamma(u^c_1))\leq \frac{\eps}{2\left|\critic(f)\right|}.$$
Denoting
 $$\mathcal{U}=[0,f(y)-f(x)]\setminus\bigcup_{c \in \critic(f)}[u^c_0,u^c_1],$$
 we have 
 $$\gamma\left(\mathcal{U}\right)\in\man_\eps$$
 Furthermore:
\begin{align*}
\dis(p,q)&\leq\int_{u\in \mathcal{U}}\left\Vert\frac{d\psi_{u+f(x)}(p)}{du}\right\Vert\,du + \sum_{c \in \critic(f)} \dis(\gamma(u^c_0),\gamma(u^c_1))\\
&\leq\int_{u\in \mathcal{U}}\frac{du}{\Vert\nabla f(\psi_{u+f(x)}(p))\Vert} + \sum_{c \in \critic(f)} \dis(\gamma(u^c_0),\gamma(u^c_1))\\
&\leq\frac{f(y)-f(x)}{\inf_{p\in\man_\eps}\Vert\nabla f(p)\Vert}+ \sum_{c \in \critic(f)} \frac{\eps}{2\left|\critic(f)\right|}\\
&\leq\frac{\dis(x,y)\cdot \sup_{p\in\man}\Vert\nabla f(p)\Vert}{\inf_{p\in\man_\eps}\Vert\nabla f(p)\Vert}+\frac{\eps}{2}\\
&\leq\eps
\end{align*}
 As such, $\haus([x]_{\sim_f},[y]_{\sim_f})\leq\eps$.
 \end{proof}
 
 Using the previous assertion, we will bound the $\eps$-covering number of $\reeb$.
 
Let $e=\frac{\eps\cdot \inf_{p\in\man_\eps}\Vert\nabla f(p)\Vert}{2\sup_{p\in\man}\Vert\nabla f(p)\Vert}$ and consider minimal $e$-coverings of each $S_{j,l}$.
Now, for every $x$ such that $f(x)$ is not a critical value, $x$ is in one of the $S_{j,l}$. As such, it is in one of the balls of radius $e$ that cover $S_{j,l}$, centered, say, on $x_i$. Accordingly, by the previous assertion $[x]_{\sim_f}\in\ball\left([x_i]_{\sim_f},\eps\right)$.

We proved that

$$\covn_\eps\left(\reeb\right)\leq N_c+\sum_{j,l}\covn_e\left(S_{j,l}\right).$$
 
 Fix one of the $S_{j,l}$. We now bound $\covn_e\left(S_{j,l}\right)$. If $\{\ball(x_i,e)\}_i$ is a minimal covering, the balls $\{\ball(x_i,e/2)\}_i$ are disjoint. Hence, 
 $$\vol\left(\bigcup_{i}\ball(x_i,e/2)\right)=\sum_{i}\vol\left(\ball(x_i,e/2)\right).$$
Also, 
$$\bigcup_{i}\ball(x_i,e/2)\subseteq \man,$$

and as such
$$\sum_{i}\frac{\vol\left(\ball(x_i,e/2)\right)}{\vol\left(\man\right)}\leq 1.$$
We now use Theorem~\ref{thm:bishop_gromov} since we assumed that the Ricci curvature of $\man$ verifies $\ricci\geq(d-1)\cdot k$. We have:

$$\frac{\vol\left(\ball(x_i,e/2)\right)}{v(d,k,e/2)}\geq\frac{\vol\left(\man\right)}{v(d,k,\diam(\man))},$$
and as such

$$\sum_i \frac{v(d,k,e/2)}{v(d,k,\diam(\man))}\leq\sum_{i}\frac{\vol\left(\ball(x_i,e/2)\right)}{\vol\left(\man\right)}.$$
Hence,

$$\covn_e\left(S_{j,l}\right)\leq\frac{v(d,k,\diam(\man))}{v(d,k,e/2)}.$$

Looking at Proposition~\ref{prop:geo_vol}, we know that 
$$v(d,k,e/2)\underset{e\rightarrow0}{\sim}\alpha_d\cdot\frac{e^d}{2^d},$$
where $\alpha_d$ is the volume of the unit ball in $\mathbb{R}^d$.
For every small enough $e$, and hence small enough $\epsilon$, we have
$$v(d,k,e/2)\geq \frac{\alpha_d}{2}\cdot \frac{e^d}{2^d}.$$

We conclude that
$$\covn_e\left(S_{j,l}\right)\leq \frac{2^{d+1}\cdot v(d,k,\diam(\man))}{\alpha_d\cdot e^d}.$$
Accordingly
$$\covn_\epsilon(S)\leq N_c+\left(\frac{\Tilde{\beta}}{\eps\cdot\Gamma(\eps)}\right)^d,$$
where:
\begin{align*}
    \Tilde{\beta}&=\left(\frac{2\left|\{S_{j,l}\}_{j,l}\right|\cdot v(d,k,\diam(\man))}{\alpha_d}\right)^{\frac{1}{d}}\cdot4\sup_{p\in\man}\Vert\nabla f(p)\Vert,\\
    \Gamma(\eps)&=\inf_{p\in\man_\eps}\Vert\nabla f(p)\Vert.
\end{align*}

It only remains now to investigate the behavior of $\Gamma(\eps)$ when $\eps\rightarrow 0$.

For $\eps$ small enough, $\inf_{p\in\man_\eps}\Vert\nabla f(p)\Vert$ is reached in a neighborhood of a critical point $c$, at a point $p$ such that
$$\dis(p,c)\geq\frac{\eps}{4\left|\critic(f)\right|}.$$

 Now, writing $f$ inside a Morse coordinate neighborhood $U\subseteq\man$ centered at $c$ (see Lemma~\ref{lem:morse}) gives:\\
 $$\sqrt{\sum_{i=1}^d\frac{\partial f(p)^2}{\partial x_i}}=2\sqrt{\sum_{i=1}^d x_i^2},$$
 
 where $x(p)=(x_1,...,x_d)$ is the representation of the point $p\in U$ in local coordinates.\\
Proposition~\ref{prop:grad} gives:
$$\Vert \nabla f(p) \Vert\geq \frac{1}{\mu(p)}\cdot \sqrt{\sum_{i=1}^d\frac{\partial f(p)^2}{\partial x_i}},$$

with $\mu(p)=\max\{\sqrt{\rho},\, \rho\in\mathrm{Sp(G)}\}$ where $\mathrm{Sp(G)}$ is the spectrum of the metric tensor $\G(p)=\left[\g(\partial_i,\partial_j)\right]_{i,j}$. The function $\mu$ is continuous and $\mu(p)\rightarrow\mu(c)<\infty$ as $p\rightarrow c$. Taking $\mu_0=\sup \mu(p)$ in a small neighborhood of $c$, we have $\mu_0<\infty$ and 
$$\Vert \nabla f(p) \Vert\geq \frac{1}{\mu_0}\cdot \sqrt{\sum_{i=1}^d\frac{\partial f(p)^2}{\partial x_i}},$$
 
Moreover, Proposition~\ref{prop:dis_equiv} shows that:
$$\dis_0(p,c)=\sqrt{\sum_{i=1}^d x_i^2}\geq \frac{1}{\mu_0} \cdot \dis(p,c),$$
for small enough $\eps$. Hence,
$$\Vert \nabla f(p) \Vert \geq \frac{2}{\mu_0^2}\cdot \frac{\eps}{4\left|\critic(f)\right|}.$$

Finally,
$$\covn_\eps(\reeb)\leq N_c+\left(\frac{\beta}{\eps}\right)^{2d},$$
where 
$$\beta=\left(2\Tilde{\beta}\cdot \mu_0^2\cdot\left|\critic(f)\right| \right)^{\frac{1}{2}}$$

 \end{proof}
 
\subsubsection{Convergence rate for the estimation error} 
We first restate Proposition 5. from~\cite{weed2019sharp}.
\begin{proposition}\label{prop:weed}
Let $(\spa,d)$ be a Polish metric space with $\diam(\spa)\leq 1$, $\mes$ be a probability measure on $\spa$ and $\diracn$ be an empirical measure associated to a random sample $\sample_n$ taken from $\mes$. \\
Let $p\in[1,\infty)$. Suppose there exists $\eps'>0$ and $s>2p$ such that for all $\eps\leq\eps'$
$$-\frac{\log \covn_{\eps,s}}{\log\eps}\leq s,$$
 where $$\covn_{\eps,s}=\inf\left\{\covn_\eps(B)\, ,\,\mes(B)\geq 1-\eps^{\frac{sp}{s-2p}}\right\}.$$
 Then,
 $$\wasser_p^p(\mes,\diracn)\leq C_1'n^{-\frac{p}{s}}+C_2'n^{-\frac{1}{2}},$$
 where 
 $$C_1'=3^{\frac{3sp}{s-2p}+1}\left(\frac{1}{3^{\frac{s}{2}-p}-1}+3\right),$$
 and
 $$C_2'=\left(27/\eps'\right)^{\frac{s}{2}}.$$

\end{proposition}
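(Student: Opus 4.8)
The plan is to follow the multiresolution strategy of~\cite{weed2019sharp}: bound $\wasser_p^p(\mes,\diracn)$ by a weighted sum, over dyadic scales, of the discrepancies between $\mes$ and $\diracn$ on the cells of a nested family of partitions, and then control each discrepancy by binomial concentration. First I would fix the normalization $\diam(\spa)\le 1$ and, for each integer $k\ge 0$, apply the hypothesis at scale $\eps=2^{-k}$ to produce a set $B_k\subseteq\spa$ with $\mes(B_k)\ge 1-2^{-k\,\frac{sp}{s-2p}}$ that admits an $\eps$-cover by at most $\covn_{\eps,s}\le\eps^{-s}=2^{ks}$ balls. Refining these covers into genuine partitions $\mathcal{P}_k$ whose cells have diameter $\le 2^{-k}$ on $B_k$ and are nested across $k$ gives a hierarchy in which the number of cells at level $k$ is $N_k\lesssim 2^{ks}$.

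The core of the argument is a \emph{hierarchical transport bound}: matching mass cell-by-cell from the coarsest to the finest level, and transporting unmatched residual mass within the finest cells, yields an estimate of the form
\begin{equation*}
\wasser_p^p(\mes,\diracn)\;\lesssim\;\sum_{k\ge 1} 2^{-kp}\sum_{S\in\mathcal{P}_k}\bigl|\mes(S)-\diracn(S)\bigr|\;+\;(\textrm{residual-mass term}),
\end{equation*}
where the residual term accounts both for the truncation at a finest scale $t$ and for the small-measure complements $\spa\setminus B_k$. I would then take expectations and use that $n\,\diracn(S)$ is $\mathrm{Binomial}(n,\mes(S))$, so $\mathbb{E}\bigl|\mes(S)-\diracn(S)\bigr|\le\sqrt{\mes(S)/n}$; summing over one level via Cauchy--Schwarz together with $\sum_{S\in\mathcal{P}_k}\mes(S)\le 1$ gives $\sum_{S\in\mathcal{P}_k}\mathbb{E}\bigl|\mes(S)-\diracn(S)\bigr|\le\sqrt{N_k/n}\lesssim 2^{ks/2}n^{-1/2}$.

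Plugging this in, the expected bound reduces to a geometric-type series $n^{-1/2}\sum_{k=1}^{t} 2^{k(s/2-p)}$. Since $s>2p$ the exponent $s/2-p$ is strictly positive, so the sum is dominated by its last term $2^{t(s/2-p)}$; I would then truncate at the level $t$ where this accumulated series balances the finest-scale residual $2^{-tp}$ and the discarded mass $\eps^{sp/(s-2p)}$. This balancing (which forces $2^{t}\sim n^{1/s}$, whence $2^{-tp}\sim n^{-p/s}$) is exactly what produces the crossover between the $n^{-p/s}$ regime and the $n^{-1/2}$ regime. Carrying out the geometric sums explicitly, with $\sum_{k\le t}2^{k(s/2-p)}\le\frac{2^{t(s/2-p)}}{1-2^{-(s/2-p)}}$ and the base-$3$ bookkeeping of~\cite{weed2019sharp}, yields the two stated terms with the constants $C_1'$ and $C_2'$.

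The hard part will be the construction of the nested, diameter-controlled partitions together with the hierarchical matching bound: one must arrange that the cells are simultaneously nested across scales, of controlled diameter on the high-measure sets $B_k$, and compatible with the localization, so that the mass lying outside $B_k$ is transported at a cost small enough to be absorbed into the $n^{-p/s}$ term. The delicate quantitative point is the choice of the mass threshold $\eps^{\frac{sp}{s-2p}}$, tuned precisely so that the localized covering number $\covn_{\eps,s}$ controls the dominant series while the discarded mass contributes only at lower order; once this is in place, extracting the explicit constants $C_1'$ and $C_2'$ is a routine summation of the resulting geometric series.
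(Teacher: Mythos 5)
This proposition is not actually proved in the paper: it is restated verbatim from Proposition~5 of \cite{weed2019sharp}, and the authors defer entirely to that reference. Your sketch reconstructs, in substance, the proof from that source: localized covers of high-mass sets $B_k$ with $\covn_{\eps,s}\leq\eps^{-s}$ (your reading of the hypothesis is correct, since $\log\eps<0$), refined into nested diameter-controlled partitions (the source works in base $3$ rather than base $2$, which is where the factors $3^{\frac{3sp}{s-2p}+1}$, $3^{\frac{s}{2}-p}$ and $27$ in $C_1'$ and $C_2'$ originate), the multiscale transport inequality bounding $\wasser_p^p(\mes,\diracn)$ by $\sum_k \eps_k^p\sum_{S}|\mes(S)-\diracn(S)|$ plus residuals, the binomial estimate $\mathbb{E}\,|\mes(S)-\diracn(S)|\leq\sqrt{\mes(S)/n}$ combined with Cauchy--Schwarz to get $\sqrt{N_k/n}$ per level, and truncation at $\eps_t\sim n^{-1/s}$; the deferred construction of the compatible nested partitions is indeed where the real work lies (it is Proposition~1 and its supporting lemmas in \cite{weed2019sharp}). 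One inaccuracy worth fixing: the term $C_2'n^{-1/2}$ does not arise from the balancing at the finest scale --- that balancing makes the dominant series and the residual \emph{both} of order $n^{-p/s}$, and the discarded mass $\eps_t^{\frac{sp}{s-2p}}=n^{-\frac{p}{s-2p}}$ is of strictly lower order --- but from the finitely many coarse scales $\eps>\eps'$ on which the covering hypothesis is silent, where one falls back on $\covn_\eps\leq\covn_{\eps'}\lesssim(\eps')^{-s}$ and sums the resulting contributions of size $3^{-kp}(\eps')^{-s/2}n^{-1/2}$; this is precisely why $C_2'=\left(27/\eps'\right)^{s/2}$ depends on $\eps'$. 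Your sketch never invokes the restriction $\eps\leq\eps'$, so as written it could not account for that constant, but otherwise it follows the cited argument faithfully.
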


We now apply Proposition~\ref{prop:weed} to bound the estimation error $\estim_n$, using our asymptotic control over the covering number of $\reeb$ in Proposition~\ref{prop:covn}.

 \begin{corollary}\label{cor:estim}
For every $s>\max(2d,2p)$, we have
 $$\mathbb{E}\left(\estim_n^p\right)\leq C_1n^{-\frac{p}{s}}+C_2n^{-\frac{1}{2}},$$
 where 
 $$C_1=\diam\left(\man\right)^p\cdot3^{\frac{3sp}{s-2p}+1}\left(\frac{1}{3^{\frac{s}{2}-p}-1}+3\right),$$
 $$C_2=\diam\left(\man\right)^p\cdot\left(27/\eps'\right)^{\frac{s}{2}},$$
 and $\eps'>0$.
 \end{corollary}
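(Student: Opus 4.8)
The plan is to recognize $\estim_n^p = \wasser_p^p(\mes\circ\pi^{-1}, \diracn\circ\pi^{-1})$ as a $p$-Wasserstein distance \emph{within} the metric space $(\overline{\reeb}, \haus)$, and then to invoke Proposition~\ref{prop:weed}. First I would note that $(\overline{\reeb},\haus)$ is Polish: it is compact (hence complete) and separable by Proposition~\ref{prop:reeb_sep}. Crucially, since the $x_i$ are i.i.d.\ from $\mes$, the images $\pi(x_i)=[x_i]_{\sim_f}$ are i.i.d.\ from the pushforward $\mes\circ\pi^{-1}$ (well-defined $\mes$-a.e.\ by Corollary~\ref{cor:pi_mes} and Lemma~\ref{lem:zero_measure}), and $\diracn\circ\pi^{-1}=\frac{1}{n}\sum_i\dirac_{\pi(x_i)}$ is exactly the empirical measure of this sample. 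Thus $\estim_n^p$ fits the template of Proposition~\ref{prop:weed} with underlying space $(\overline{\reeb},\haus)$ and probability measure $\mes\circ\pi^{-1}$.

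Proposition~\ref{prop:weed} requires the diameter to be at most $1$, so the second step is to rescale. Since the Hausdorff distance between two compact subsets of $\man$ is at most $\diam(\man)$, we have $\diam(\overline{\reeb},\haus)\leq\diam(\man)$; I would therefore work with the normalized metric $\haus/\diam(\man)$. Under this rescaling, $\wasser_p^p$ is divided by $\diam(\man)^p$, while an $\eps$-ball in the normalized metric is a $(\eps\cdot\diam(\man))$-ball for $\haus$, so the covering numbers transform as $\covn_\eps^{\mathrm{norm}}(\reeb)=\covn_{\eps\cdot\diam(\man)}(\reeb)$. The two factors $\diam(\man)^p$ appearing in $C_1$ and $C_2$ will arise precisely from undoing this rescaling at the end.

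The key step is to verify the hypothesis of Proposition~\ref{prop:weed}, namely $-\log\covn_{\eps,s}/\log\eps\leq s$ for all small $\eps$. Taking $B=\overline{\reeb}$ (which has full measure) gives $\covn_{\eps,s}\leq\covn_\eps(\reeb)$, and Proposition~\ref{prop:covn} bounds $\covn_{\eps\cdot\diam(\man)}(\reeb)\leq N_c+(\beta/(\eps\cdot\diam(\man)))^{2d}$ for $\eps$ small enough. Since the dominant term is of order $\eps^{-2d}$, one gets $-\log\covn_{\eps,s}/\log\eps\to 2d$ as $\eps\to0$; because $s>\max(2d,2p)\geq 2d$, the ratio $\eps^{-(s-2d)}\to\infty$, so for $\eps$ below some threshold $\eps'>0$ the whole bound $N_c+(\beta')^{2d}\eps^{-2d}$ is $\leq\eps^{-s}$, which is exactly the required inequality. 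This $\eps'$ is the one entering $C_2$.

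Finally I would apply Proposition~\ref{prop:weed} in the normalized space with this $s$ and $\eps'$, obtaining $\mathbb{E}[\wasser_p^p]\leq C_1'n^{-p/s}+C_2'n^{-1/2}$ with $C_1',C_2'$ as stated there, and then multiply through by $\diam(\man)^p$ to recover the unnormalized distance and the constants $C_1,C_2$. The main obstacle is really bookkeeping rather than conceptual: correctly threading the $\diam(\man)$ rescaling through both the Wasserstein distance and the covering numbers, and pinning down the threshold $\eps'$ so that the covering-exponent condition holds with the exponent $2d$ coming from Proposition~\ref{prop:covn} rather than $d$ --- this $2d$ is precisely what forces the requirement $s>2d$.
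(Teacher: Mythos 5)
Your proposal is correct and follows essentially the same route as the paper's proof: identify $\diracn\circ\pi^{-1}$ as the empirical measure of an i.i.d.\ sample from $\mes\circ\pi^{-1}$, rescale $\haus$ by $\diam(\man)$ to meet the diameter condition, verify the covering-number hypothesis of Proposition~\ref{prop:weed} via Proposition~\ref{prop:covn} (with the exponent $2d$ forcing $s>\max(2d,2p)$), and undo the rescaling to obtain $C_1$ and $C_2$. The only cosmetic difference is that you make explicit the choice $B=\overline{\reeb}$ in bounding $\covn_{\eps,s}$, which the paper does implicitly.
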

 \begin{proof}
 We have $\estim_n=\wasser_p(\mes\circ\pi^{-1},\diracn\circ\pi^{-1})$. Moreover, $\diracn\circ\pi^{-1}$ is an empirical measure associated to $\mes\circ\pi^{-1}$, since for every Borel subset $A$ of $\overline{\reeb}$:
 \begin{align*}
     \diracn\circ\pi^{-1}(A)&=\frac{1}{n}\sum_{i=1}^n\ind_{x_i\in\pi^{-1}(A)} \\
     &=\frac{1}{n}\sum_{i=1}^n\ind_{[x_i]_{\sim_f}\in A},
 \end{align*}
 and 
 $$[x_i]_{\sim_f}\overset{\mathrm{iid}}{\sim}\mes\circ\pi^{-1},$$
 because 
  \begin{align*}
     \mathbb{P}([x_i]_{\sim_f}\in A)&=\mathbb{P}(x_i\in \pi^{-1}(A)) \\
     &=\mes\circ\pi^{-1}(A),
 \end{align*}
 where we used that $\diracn$ is an empirical measure associated to $\mes$.
 
In~\cite{weed2019sharp}, the metric spaces $\spa$ are assumed to verify $\diam(\spa)\leq 1$. This can be achieved in our case by considering a normalized version of $\haus$ in $\bar \reeb$, defined as $\haus/\diam(\man)$. Notice that covering numbers $\covn'_\eps(\cdot)$ of this normalized space satisfy
$$\covn'_\eps(\cdot)=\covn_{(\diam(\man)\cdot\eps)}(\cdot).$$
Moreover, the Wasserstein distance $\wasser'_p$ in the normalized space satisfies:
$$\wasser'_p(\cdot,\cdot)=\frac{1}{\diam(\man)}\wasser_p(\cdot,\cdot).$$
Now, we can use Proposition 5. in~\cite{weed2019sharp} to prove our result. The only assumption made in this proposition is that, for any small enough $\eps$ and $s>2p$: 
 $$-\frac{\log \covn'_{\eps,s}}{\log\eps}\leq s,$$
 where $$\covn'_{\eps,s}=\inf\left\{\covn'_\eps(B)\, ,\,\mes\circ\pi^{-1}(B)\geq 1-\eps^{\frac{sp}{s-2p}}\right\}.$$
 We see that:
 $$\covn'_{\eps,s}\leq\covn'_\eps(\reeb)=\covn_{(\diam(\man)\cdot\eps)}(\reeb).$$
 As such, using Proposition~\ref{prop:covn}, for any small enough $\eps$, we have
 $$-\frac{\log \covn'_{\eps,s}}{\log\eps}\leq-\frac{\log\left( N_c+\left(\frac{\beta}{\diam(\man)\cdot\eps}\right)^{2d}\right)}{\log\eps}\underset{\eps\rightarrow0}{\longrightarrow}2d.$$
 Hence, for every $s>\max(2d,2p)$ there exists $\eps'>0$ such that for every $\eps\leq\eps'$:
 $$-\frac{\log \covn'_{\eps,s}}{\log\eps}\leq s.$$
 Applying Proposition~\ref{prop:weed} gives
 $$\wasser_p^{'^p}(\mes\circ\pi^{-1},\diracn\circ\pi^{-1})\leq C_1'n^{-\frac{p}{s}}+C_2'n^{-\frac{1}{2}},$$
 where 
 $$C_1'=3^{\frac{3sp}{s-2p}+1}\left(\frac{1}{3^{\frac{s}{2}-p}-1}+3\right),$$
 and
 $$C_2'=\left(27/\eps'\right)^{\frac{s}{2}}.$$
 As such,
 $$\wasser_p^p(\mes\circ\pi^{-1},\diracn\circ\pi^{-1})\leq \diam\left(\man\right)^p\cdot C_1'n^{-\frac{p}{s}}+\diam\left(\man\right)^p\cdot C_2'n^{-\frac{1}{2}},$$
 which concludes the proof.
  \end{proof}

Note that the constants $C_1$ and $C_2$ in Corollary~\ref{cor:estim} are, up to a constant, the same as in Proposition 5 of~\cite{weed2019sharp}, and $\eps'$ in the expression of $C_2$ is the same as in the proof of Corollary~\ref{cor:estim}.  

 \subsection{The approximation error \texorpdfstring{$\appr_n$}{TEXT}}
 \subsubsection{Comparing Reeb graph and Mapper graph elements}
We focus here on the second term in the upper bound of Proposition~\ref{prop:reeb_mapper}:
$$\appr_n=\left(\frac{1}{n}\sum_{i=1}^n\haus([x_i]_{\sim_f},\DD(x_i))^p\right)^{\frac{1}{p}}.$$
Recall that given a cover $\mathbb{I}=\{\interv_1,\dots,\interv_r\}$ of the range of the filter $f(\sample_n)$, the {\em refinement} of $\mathbb{I}$ is defined as:
$$\mathbb{J}=\left\{\interv_1\setminus\interv_2,\interv_1\cap\interv_2,\interv_2\setminus(\interv_1\cup\interv_3),\interv_2\cap\interv_3,\dots\right\}.$$
For every critical point $c\in\critic(f)$, we know that there exists a Morse chart $\varphi\colon U\subseteq\man\rightarrow\R^d$ around $c$ by Lemma~\ref{lem:morse}. Recall that the Riemannian metric $\g$ can be represented in this chart as a symmetric positive-definite matrix $\G(c)$.

\begin{lemma}\label{lem:delta}
Let $x_i\in\sample_n$ and $\mathrm{J}\in\mathbb{J}$ such that $\mathrm{J}$ does not contain critical values and is not adjacent to another refined cover element $\mathrm{J}'$ that contains critical values. For any large enough resolution $r$, if $x_i\in\mathrm{J}$, then for every $x_j\in\DD(x_i)$:
$$\dis(x_j,[x_i]_{\sim_f})\leq\max_{c\in\critic(f)}\mu(c)\cdot\sqrt{\frac{1-g}{g}}\cdot\sqrt{\coverwidth}$$
where $\mu(c)$ is the square root of the spectral radius of $\G(c)$, see Section~\ref{sec:elemRiem}.
\end{lemma}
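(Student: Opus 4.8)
The plan is to transport $x_j$ onto the Reeb element $[x_i]_{\sim_f}$ along the gradient flow of $f$ and to bound $\dis(x_j,[x_i]_{\sim_f})$ by the length of the resulting flow curve. The first step is to record a combinatorial consequence of $x_j\in\DD(x_i)$: since the representative of a simplex consists of exactly those sample points lying in a prescribed collection of clusters, $x_i$ and $x_j$ belong to \emph{exactly the same} pullback clusters. When $\DD(x_i)$ is a vertex this forces $f(x_j)$ to lie in the pure block $\mathrm J$, and when $\DD(x_i)$ is an edge it forces $f(x_j)\in\mathrm J$ as well, because the intersection of the two clusters is contained in $\preim{f}{\mathrm J}$. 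In all cases $f(x_j)\in\mathrm J$, hence $|f(x_i)-f(x_j)|\le|\mathrm J|\le\coverwidth$, and the whole flow from level $f(x_j)$ to level $v:=f(x_i)$ stays inside $\preim{f}{\mathrm J}$.

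Next I would use the hypothesis on $\mathrm J$. Since neither $\mathrm J$ nor any refined block adjacent to it contains a critical value, $\preim{f}{\mathrm J}$ contains no critical points, so by Lemma~\ref{lem:flow} and Theorem~\ref{thm:morse} it is a disjoint union of cylinders on which the flow $\psi_t$ is well defined and preserves connected components. Consequently $q:=\psi_{v-f(x_j)}(x_j)$ lies in the same component of $\preim{f}{\mathrm J}$ as $x_j$, at level $v$, and since a cylinder meets the level set $\preim{f}{\{v\}}$ in a single connected slice, that slice is exactly $[x_i]_{\sim_f}$; thus $q\in[x_i]_{\sim_f}$ and $\dis(x_j,[x_i]_{\sim_f})\le\dis(x_j,q)$, which is at most the length of $t\mapsto\psi_t(x_j)$. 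In the vertex case this is immediate because the governing cover interval coincides with $\mathrm J$ together with its two adjacent overlap blocks, so the whole cluster is one cylinder; the edge case requires the slightly more delicate verification that $x_i$ and $x_j$ share a connected component of $\preim{f}{\mathrm J}$, and this is the main topological obstacle of the proof.

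It then remains to bound the flow length $\dis(x_j,q)\le\int\bigl\|\tfrac{d\psi_t(x_j)}{dt}\bigr\|\,dt=\int \tfrac{1}{\|\nabla f\|}$, reparametrised by the filter value over an interval of length $|f(x_i)-f(x_j)|\le\coverwidth$. The key quantitative estimate is a lower bound on $\|\nabla f\|$ on $\preim{f}{\mathrm J}$. Away from critical points $\|\nabla f\|$ is bounded below by a fixed positive constant. Near a critical point $c$, I would write $f$ in the Morse chart of Lemma~\ref{lem:morse}; there $\sum_j x_j^2\ge|f(p)-f(c)|$, and since $f(c)$ is a critical value excluded from $\mathrm J$ and its neighbours, $|f(p)-f(c)|\ge\frac{g}{1-g}\coverwidth$ for every $p$ with $f(p)\in\mathrm J$. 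Because $\sqrt{\sum_i(\partial_i f)^2}=2\sqrt{\sum_j x_j^2}$, Proposition~\ref{prop:grad} gives $\|\nabla f(p)\|\ge\frac{2}{\mu(p)}\sqrt{\frac{g}{1-g}\coverwidth}$, and $\mu(p)\to\mu(c)\le\max_{c\in\critic(f)}\mu(c)$ as $p\to c$, which is precisely where this bound becomes active for large $r$. For a large enough resolution the constant regime is dominated by this one, so $\sup_{f(p)\in\mathrm J}\|\nabla f(p)\|^{-1}\le\frac{1}{2}\max_{c\in\critic(f)}\mu(c)\,\sqrt{\frac{1-g}{g}}\,\coverwidth^{-1/2}(1+o(1))$.

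Multiplying the span $\le\coverwidth$ by this supremum yields $\dis(x_j,[x_i]_{\sim_f})\le\frac{1}{2}\max_{c\in\critic(f)}\mu(c)\,\sqrt{\frac{1-g}{g}}\,\sqrt{\coverwidth}\,(1+o(1))$, and the $o(1)$ together with the factor $\tfrac12$ is absorbed once $r$ is large enough, giving the stated bound; this is also why the hypothesis is only ``for any large enough resolution $r$''. I expect the two genuinely delicate points to be the shared-component claim in the edge case and making the transition between the near-critical Morse estimate and the away-from-critical constant bound uniform, since it is the square root in the gradient lower bound that produces the characteristic $\sqrt{\coverwidth}$ rate rather than $\coverwidth$.
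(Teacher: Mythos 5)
Your proposal follows essentially the same route as the paper's proof: reduce the problem to showing that $x_i$ and $x_j$ lie in the same connected component of $\preim{f}{\mathrm{J}}$, transport $x_j$ onto $[x_i]_{\sim_f}$ along the gradient flow, bound the flow length by $|f(x_i)-f(x_j)|/\inf_{p\in\preim{f}{\mathrm{J}}}\Vert\nabla f(p)\Vert\leq\coverwidth/\inf_{p\in\preim{f}{\mathrm{J}}}\Vert\nabla f(p)\Vert$, and lower-bound the gradient near a critical point $c$ in a Morse chart via $\Vert\nabla f(p)\Vert\geq\frac{2}{\mu(p)}\sqrt{|f(p)-f(c)|}\geq\frac{2}{\mu(p)}\sqrt{\frac{g}{1-g}\coverwidth}$, with the away-from-critical constant regime dominated once $r$ is large. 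Your quantitative half is correct and matches the paper's, including the use of the minimal refined width $\frac{g}{1-g}\coverwidth$ as the separation from the excluded critical value. One small caveat on bookkeeping: your ``$(1+o(1))$'' is loosely attributed, since once the Morse charts are fixed (and they must be fixed independently of $r$, as the paper stresses), $\sup_{p\in U}\mu(p)$ is a fixed constant and does not tend to $\mu(c)$ as $r\to\infty$; the rigorous version of your absorption is the paper's move of shrinking each chart once and for all so that $\sup_{p\in U}\mu(p)\leq 2\mu(c)$, which cancels exactly the factor $\frac{1}{2}$ you carry.

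The one step you explicitly defer --- that in the edge case $x_i$ and $x_j$ share a connected component of $\preim{f}{\mathrm{J}}$ --- is the genuine gap in your write-up, and it is precisely where the paper spends its topological effort. The paper closes it as follows: when $\mathrm{J}=\interv_j\cap\interv_{j+1}$, the hypothesis that neither $\mathrm{J}$ nor its adjacent refined blocks contain critical values allows one to invoke Theorem~\ref{thm:morse} to see $\preim{f}{\interv_j\cup\interv_{j+1}}$ as a finite collection of cylinders whose heights are given by the filter values; consequently the connected components of $\preim{f}{\mathrm{J}}$ are \emph{exactly} the pairwise intersections of the connected components of $\preim{f}{\interv_j}$ with those of $\preim{f}{\interv_{j+1}}$. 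Since $x_j\in\DD(x_i)$ means $x_i$ and $x_j$ lie in the same cluster of $\interv_j$ and in the same cluster of $\interv_{j+1}$, they then lie in a common component of $\preim{f}{\mathrm{J}}$, after which your cylinder-slice argument applies verbatim. (Your vertex case is complete as written: there $\interv_j$ is the union of $\mathrm{J}$ with its two adjacent overlap blocks, hence contains no critical values, and the whole cluster is a single cylinder.) So the skeleton of your argument is correct and coincides with the paper's, but the edge-case component identification must actually be supplied, e.g.\ by the union-of-cylinders argument above, rather than flagged as an obstacle.
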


\begin{proof}
First, notice that if $\mathrm{J}$ does not correspond to an intersection of two intervals, i.e., it is of the form $\mathrm{J}=\interv_j\setminus(\interv_{j-1}\cup\interv_{j+1})$, then $\DD(x_i)$ is a subset of the connected component of $x_i$ in $\preim{f}{\mathrm{J}}$.

Second, when $\mathrm{J}$ is an intersection, of the form $\mathrm{J}=\interv_j\cap\interv_{j+1}$, our assumption that $\mathrm{J}$ does not contain critical values and is not adjacent to another refined cover element $\mathrm{J}'$ that contains critical values implies that $\preim{f}{\interv_j\cup\interv_{j+1}}$ is homeomorphic to a finite collection of cylinders whose heights are given by the function values (see Theorem~\ref{thm:morse}). As such, the connected components of  $\preim{f}{\mathrm{J}}$ are exactly the intersections of the connected components of $\preim{f}{\interv_j}$ and $\preim{f}{\interv_{j+1}}$. Therefore, $\DD(x_i)$ is also a subset of the connected component of $x_i$ in $\preim{f}{\mathrm{J}}$ in this case.

Let $x_j\in\DD(x_i)$. Then  $x_j$ is in the same connected component of $\preim{f}{\mathrm{J}}$ than $x_i$, as demonstrated above. Since $\mathrm{J}$ does not contain critical values, we can therefore send $x_j$ to a point $\psi_{f(x_i)-f(x_j)}(x_j)$ in $[x_i]_{\sim_f}$ using the gradient flow of $f$. See the proof of Theorem~\ref{thm:morse} for more details. We can therefore write:

\begin{align*}
\dis(x_j,\psi_{f(x_i)-f(x_j)}(x_j))&\leq\left|\int_{0}^{f(x_i)-f(x_j)}\left\Vert\frac{d\psi_{u}(p)}{du}\right\Vert\,du \right| \\
&\leq \left|\int_{0}^{f(x_i)-f(x_j)}\frac{1}{\Vert\nabla f(\psi_{u}(p))\Vert}\,du \right|\\
&\leq\frac{|f(x_i)-f(x_j)|}{\inf_{p\in \preim{f}{\mathrm{J}}}\Vert\nabla f(p)\Vert}\\
&\leq\frac{\coverwidth}{\inf_{p\in \preim{f}{\mathrm{J}}}\Vert\nabla f(p)\Vert}.
\end{align*}

Now, if $\preim{f}{\mathrm{J}}$ is sufficiently close to a critical point $c$, in the sense that it intersects a Morse coordinate neighborhood $U\subseteq\man$ around $c$ in a non-empty region, we can use Lemma~\ref{lem:morse} to write:
 $$f(p)=f(c)-\sum_{j=1}^ix_j^2+\sum_{j=i+1}^dx_j^2,$$
where $x(p)=(x_1,\dots,x_d)$ is the representation of $p\in U$ in local coordinates. Proposition~\ref{prop:grad} therefore gives:
$$\Vert\nabla f(p)\Vert\geq\frac{2\Vert x(p)\Vert_0}{\mu(p)},$$
where $\Vert\cdot\Vert_0$ is the Euclidean norm in local coordinates. 
However, these local coordinates also induce:
$$|f(p)-f(c)|\leq \Vert x(p)\Vert_0^2,$$
and therefore 
$$\Vert\nabla f(p)\Vert\geq\frac{2\sqrt{|f(p)-f(c)|}}{\mu(p)}.$$ 
Since $\mathrm{J}$ does not contain critical values and is not adjacent to another refined cover element $\mathrm{J}'$ that contains critical values, this means that in $U\cap \preim{f}{\mathrm{J}}$, we have 
$$|f(p)-f(c)|\geq \frac{g}{1-g}\cdot\coverwidth,$$
the latter being the minimal width of a refined cover element. 
As such:
$$\frac{1}{\inf_{p\in U\cap \preim{f}{\mathrm{J}}}\Vert\nabla f(p)\Vert}\leq \frac{\sup_{p\in U}\mu(p)}{2}\cdot\sqrt{\frac{1-g}{g}}\cdot\frac{1}{\sqrt{\coverwidth}}.$$
We see that $\sup_{p\in U}\mu(p)\rightarrow\mu(c)$ as $p\rightarrow c$. Hence, upon shrinking $U$ further if necessary, we can guarantee that
$$\sup_{p\in U}\mu(p)\leq 2\mu(c).$$
Notice that the Morse charts around critical points (that we used so far in our proof) are fixed beforehand and do not depend on the resolution $r$. Moreover, away from critical points, $\Vert\nabla f(p)\Vert$ is lower bounded by a constant $C>0$ depending only on $f$ and $\man$. 

Therefore,
$$\dis(x_j,\psi_{f(x_i)-f(x_j)}(x_j))\leq \max\left(\frac{1}{C},\max_{c\in\critic(f)}\mu(c)\cdot\sqrt{\frac{1-g}{g}}\cdot\frac{1}{\sqrt{\coverwidth}}\right)\cdot\coverwidth.$$
Finally, since $\coverwidth\underset{r\rightarrow\infty}{\longrightarrow}0$, we proved our result.
\end{proof}

We now define the modulus of continuity associated to $f$.
\begin{definition}
Let $g\colon\man\rightarrow\R$ be a uniformly continuous function. We call modulus of continuity of $g$ the function:
 \begin{align*}
     \omega_g\colon\R^+&\longrightarrow\R^+\\
     \lambda&\longmapsto \sup_{\dis(x,y)\leq\lambda}|g(x)-g(y)| .
 \end{align*}
\end{definition}

In our case $f$ is a Morse function and hence continuously differentiable. This means that the modulus of continuity $\omega_f$ of $f$ is well defined.

\begin{lemma}\label{lem:slice}
Let $\lambda>0$ such that $\haus(\sample_n,\man)\leq \lambda$ and
$$\omega_f\left(\lambda\right)\leq\frac{g}{1-g} \frac{\coverwidth}{4}.$$
Let $x_i\in\sample_n$ and $\mathrm{J}\in\mathbb{J}$ such that $\mathrm{J}$ does not contain critical values and is not adjacent to another refined cover element $\mathrm{J}'$ that contains critical values. For any large enough resolution $r$, if $x_i\in\mathrm{J}$ then for every $y\in[x_i]_{\sim_f}$:
$$\dis(y,\DD(x_i))\leq \lambda+\frac{1}{2}\cdot\max_{c\in\critic(f)}\mu(c)\cdot\sqrt{\frac{1-g}{g}}\cdot\sqrt{\coverwidth}.$$
\end{lemma}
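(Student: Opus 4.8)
The plan is to mirror the proof of Lemma~\ref{lem:delta}, but in the reverse direction: for each $y\in[x_i]_{\sim_f}$ I will exhibit a sample point of $\DD(x_i)$ lying within the claimed distance of $y$. The starting point is a structural observation already contained in the proof of Lemma~\ref{lem:delta}. Write $\mathcal{K}$ for the connected component of $x_i$ in $\preim{f}{\mathrm{J}}$. Because $\mathrm{J}$ contains no critical value and is not adjacent to a refined cover element containing one, the cylindrical description of Theorem~\ref{thm:morse} shows that $\DD(x_i)=\sample_n\cap\mathcal{K}$: the inclusion $\DD(x_i)\subseteq\mathcal{K}$ is exactly what Lemma~\ref{lem:delta} established, and conversely any $x\in\sample_n\cap\mathcal{K}$ has $f(x)\in\mathrm{J}$ and lies in the same cluster(s) as $x_i$, hence in the same Mapper simplex. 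Moreover, since $v:=f(x_i)\in\mathrm{J}$ and $[x_i]_{\sim_f}$ is the component of $x_i$ in $\preim{f}{\{v\}}\subseteq\preim{f}{\mathrm{J}}$, the whole Reeb element satisfies $[x_i]_{\sim_f}\subseteq\mathcal{K}$; in particular the given point $y$ already lies in $\mathcal{K}$.

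First I would flow $y$ to the center of the slab. Let $v_c$ be the midpoint of $\mathrm{J}$. Since $\mathrm{J}$ carries no critical value, the gradient flow $\psi$ of Lemma~\ref{lem:flow} is defined on $\preim{f}{\mathrm{J}}$ and preserves its components, so $y':=\psi_{v_c-v}(y)$ lies in $\mathcal{K}$ with $f(y')=v_c$. Bounding $\dis(y,y')$ proceeds verbatim as in Lemma~\ref{lem:delta}: integrate $1/\Vert\nabla f\Vert$ along the flow, use the Morse-chart lower bound $\Vert\nabla f\Vert\geq 2\sqrt{|f-f(c)|}/\mu$ (Proposition~\ref{prop:grad} and Lemma~\ref{lem:morse}) together with $|f-f(c)|\geq\frac{g}{1-g}\coverwidth$ near a critical point $c$, and the bound $\Vert\nabla f\Vert\geq C$ away from critical points. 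The only change is that the filter values now differ by at most $|v-v_c|\leq\frac{1}{2}\length(\mathrm{J})\leq\frac{1}{2}\coverwidth$ instead of the full width $\coverwidth$; this halving is precisely what yields the factor $\frac{1}{2}$ in the statement, giving $\dis(y,y')\leq\frac{1}{2}\max_{c\in\critic(f)}\mu(c)\sqrt{\frac{1-g}{g}}\sqrt{\coverwidth}$ for $r$ large.

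Next I would produce the sample point. By $\haus(\sample_n,\man)\leq\lambda$ there is $x_k\in\sample_n$ with $\dis(y',x_k)\leq\lambda$. The modulus hypothesis gives $|f(x_k)-v_c|\leq\omega_f(\lambda)\leq\frac{g}{1-g}\frac{\coverwidth}{4}$, whereas $v_c$ sits at distance $\frac{1}{2}\length(\mathrm{J})\geq\frac{g}{1-g}\frac{\coverwidth}{2}$ from $\partial\mathrm{J}$ by the minimal-width bound; hence $f(x_k)\in\mathrm{J}$. Granting for the moment that $x_k\in\mathcal{K}$, the first paragraph gives $x_k\in\DD(x_i)$, and the triangle inequality closes the argument:
$$\dis(y,\DD(x_i))\leq\dis(y,x_k)\leq\dis(y,y')+\dis(y',x_k)\leq\lambda+\frac{1}{2}\max_{c\in\critic(f)}\mu(c)\sqrt{\frac{1-g}{g}}\sqrt{\coverwidth}.$$

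The main obstacle is this membership $x_k\in\mathcal{K}$: a point within $\lambda$ of $y'\in\mathcal{K}$ with filter value in $\mathrm{J}$ need not, a priori, lie in $\mathcal{K}$, since a \emph{distinct} component of $\preim{f}{\mathrm{J}}$ could pass spatially close to $y'$. The resolution is to show that, for $r$ large enough, $\lambda$ is strictly smaller than the minimal separation between distinct components of $\preim{f}{\mathrm{J}}$. Away from critical points this separation is bounded below using the cylinder structure of Theorem~\ref{thm:morse}; the delicate regime is when $\mathrm{J}$ is close in filter value to a critical value, where two sheets of a level set approach one another. There the Morse Lemma forces the sheets apart at scale $\sqrt{|f-f(c)|}\gtrsim\sqrt{\frac{g}{1-g}\coverwidth}$, which dominates $\lambda$ (which, through the hypothesis $\omega_f(\lambda)\leq\frac{g}{1-g}\frac{\coverwidth}{4}$ and the Lipschitzness of $f$, is of order $\coverwidth$) as $r\rightarrow\infty$. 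This comparison $\lambda\ll\sqrt{\coverwidth}$ is exactly what the clause ``for any large enough resolution $r$'' provides.
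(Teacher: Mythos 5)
Your quantitative core coincides with the paper's proof: the paper likewise flows the point to the mid-level $\frac{v+w}{2}$ of the slab, bounds the flow length by $\frac{\coverwidth}{2\inf_{p\in\preim{f}{\mathrm{J}}}\Vert\nabla f(p)\Vert}$ using the same Morse-chart gradient estimate from Lemma~\ref{lem:delta} (this is exactly where the factor $\frac{1}{2}$ comes from, as you say), then picks a sample point within $\lambda$ via $\haus(\sample_n,\man)\leq\lambda$, and it also uses the inclusion $\sample_n\cap S\subseteq\DD(x_i)$ that you spell out. Where you genuinely diverge is the step you rightly flag as the main obstacle, and here the paper has a much softer device than your component-separation estimate. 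The paper introduces $\Tilde{S}_\lambda=S\setminus\left(\preim{f}{\{v\}}\cup\preim{f}{\{w\}}\right)^\lambda$, the points of the component $S$ that are $\lambda$-deep inside the slab $\preim{f}{[v,w]}$; the modulus hypothesis (with its margin $\frac{1}{4}$ against the half-minimal-width $\frac{g}{1-g}\frac{\coverwidth}{2}$) guarantees the mid-level of $S$ lies in $\Tilde{S}_\lambda$. Then a sample point within $\lambda$ of a point of $\Tilde{S}_\lambda$ is automatically in $S$: since the topological boundary of $S$ is contained in $\preim{f}{\{v\}}\cup\preim{f}{\{w\}}$ (equivalently, by the intermediate value theorem any short path into a \emph{different} component of $\preim{f}{\mathrm{J}}$ must pass through filter value $v$ or $w$), leaving $S$ would force a crossing at distance $\geq\lambda$ from the deep point. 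So the paper never needs any lower bound on the spatial separation between distinct components of $\preim{f}{\mathrm{J}}$, and the argument works for every admissible $\lambda$ with no extra largeness of $r$. Your route is correct in outline but strictly heavier: the separation lemma ($\mathrm{sep}\gtrsim\sqrt{\coverwidth}$, uniformly over the position of $\mathrm{J}$ as $r\to\infty$) is a genuinely new statement whose proof you only sketch, and your auxiliary inference $\lambda\lesssim\coverwidth$ does not follow from Lipschitzness of $f$ (that gives the wrong direction); it needs the reverse bound $\omega_f(\lambda)\geq c\lambda$ for small $\lambda$, which does hold because $\nabla f$ is nonvanishing somewhere on the compact $\man$, but deserves a line. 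Both pieces are fixable, so your proposal is a valid alternative whose payoff is a cleaner geometric picture (and the explicit identity $\DD(x_i)=\sample_n\cap\mathcal{K}$); the paper's $\Tilde{S}_\lambda$ trick is the more economical way to certify membership in the right component.
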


\begin{proof}
Let $\mathrm{J}:=[v,w]$ and let $S$ be the connected component of $x_i$ inside $\preim{f}{\mathrm{J}}$. 
We assumed that $\mathrm{J}$ does not contain critical values. As such, $f\left(S\right)=\mathrm{J}$, otherwise $f$ would reach either a local maximum or a local minimum in $S$. 
We will denote $\Tilde{S_\lambda}=S\setminus (\preim{f}{\{v\}}\cup \preim{f}{\{w\}})^{\lambda}$, i.e., $\Tilde{S_\lambda}$ is comprised of the points of $S$ that 
are at a distance at least $\lambda$ from $\preim{f}{\{v\}}$ or $\preim{f}{\{w\}}$.

The assumption $\omega_f\left(\lambda\right)\leq\frac{g}{1-g}\cdot\frac{\coverwidth}{4}$ guarantees that $\Tilde{S}_\lambda$ is not empty, as the set
$$S\cap \preim{f}{ \left(v+\frac{g}{1-g}\cdot\frac{\coverwidth}{4},w-\frac{g}{1-g}\cdot\frac{\coverwidth}{4}\right)}$$
is included in $\Tilde{S}_\lambda$, and is non-empty  
because $f\left(S\right)=\mathrm{J}$, like mentioned above. 

We can also immediately see that $\haus(\sample_n\cap S,\Tilde{S}_\lambda)\leq\lambda$, since we know that $\haus(\sample_n,\man)\leq \lambda$.
Therefore, for every $p\in\Tilde{S}_\lambda$, there exists $x_j$ in $\sample_n\cap S$ and hence in $\DD(x_i)$, such that $\dis(p,x_j)\leq\lambda$.

Now, let $q$ be a point that is at a distance less than $\lambda$ from either $\preim{f}{\{v\}}$ or $\preim{f}{\{w\}}$. Since we assumed that $\mathrm{J}$ does not contain critical values, we can send $q$ to a point $p=\psi_{\frac{v+w}{2}-f(q)}(q)$ in $S\cap \preim{f}{\left\{\frac{v+w}{2}\right\}}$ using the gradient flow of $f$ (see the proof of Theorem~\ref{thm:morse}). Notice that $p\in\Tilde{S}_\lambda$ because $f(p)=\frac{v+w}{2}$. 
We also have:
\begin{align*}
\dis(p,q)&\leq\left|\int_{0}^{f(p)-f(q)}\left\Vert\frac{d\psi_{u}(q)}{du}\right\Vert\,du \right| \\
&\leq\frac{\coverwidth}{2\inf_{p\in \preim{f}{\mathrm{J}}}\Vert\nabla f(p)\Vert}
\end{align*}
since $$\left|f(p)-f(q)\right|\leq \frac{\coverwidth}{2}.$$

We further assumed that $\mathrm{J}$ does not contain critical values and is not adjacent to another refined cover element $\mathrm{J}'$ that contains critical values. The same argument made in the proof of Lemma~\ref{lem:slice} shows that
$$\frac{1}{\inf_{p\in \preim{f}{\mathrm{J}}}\Vert\nabla f(p)\Vert}\leq \max\left(\frac{1}{C},\max_{c\in\critic(f)}\mu(c)\cdot\sqrt{\frac{1-g}{g}}\cdot\frac{1}{\sqrt{\coverwidth}}\right)$$
where $C$ is a constant depending only on $f$ and $\man$.\\
Taking a large enough resolution $r$, we have that
$$\dis(p,q)\leq\frac{1}{2}\cdot\max_{c\in\critic(f)}\mu(c)\cdot\sqrt{\frac{1-g}{g}}\cdot\sqrt{\coverwidth}.$$
Since $p\in\Tilde{S}_\lambda$ and $\haus(\sample_n\cap S,\Tilde{S}_\lambda)\leq\lambda$, there exists $x_j\in\sample_n\cap S$ such that 
\begin{align*}
\dis(x_j,q)&\leq\dis(x_j,p)+\dis(p,q)\\
&\leq \lambda+\frac{1}{2}\cdot\max_{c\in\critic(f)}\mu(c)\cdot\sqrt{\frac{1-g}{g}}\cdot\sqrt{\coverwidth}.
\end{align*}
Now, $x_j\in\DD(x_i)$ because $x_j\in S$ which proves our result.
\end{proof}

\subsubsection{Hausdorff convergence rate for the sample}
We now focus on the convergence rate in Hausdorff distance between $\sample_n$ and $\man$. Following the works of~\cite{chazal2014convergence} and~\cite{carriere2018statistical}, we recall that the measure $\mes$, with respect to which the points are sampled, is $(a,b)$-standard.  

\begin{definition}
Let $\nu$ be a Borel probability measure on $\man$. Let $a>0$ and $b>0$.
We say that $\nu$ is $(a,b)$-standard if for every $x\in\man$ and $r>0$:
$$\nu\left(\ball\left(x,r\right)\right)\geq \min\left(1,ar^b\right).$$
\end{definition}

 The $(a,b)$-standard assumption is fairly popular in the set estimation literature (see for example~\cite{cuevas2009set}) and is verified for example when $\mes$ is absolutely continuous with respect to the volume measure and $b=d$, the dimension of $\man$.
 
We recall Theorem 2 of~\cite{chazal2014convergence} that links the $(a,b)$-standard assumption to the Hausdorff convergence rate between a sample and the support of $\mes$.
 
 \begin{theorem}\label{thm:ab}
 Let $\nu$ be a Borel probability measure on $\man$. Let $\sample_n$ be a sample of $n$ points taken from $\nu$. Denote $\spa_\nu$ the support of $\nu$. 
 If $\nu$ is $(a,b)$-standard then for every $\eps>0$:
 $$\mathbb{P}\left(\haus(\sample_n,\spa_\nu)>2\eps\right)\leq\min\left(\frac{2^b}{a\eps^b}\exp(-na\eps^b),1\right).$$
 \end{theorem}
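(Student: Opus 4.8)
The plan is to prove this concentration bound through a covering/net argument, converting the uncountable supremum defining the Hausdorff distance into a finite union of "empty ball" events that are easy to control via independence. First I would observe that the sample points $x_1,\dots,x_n$ lie in $\spa_\nu=\supp(\nu)$ almost surely, so that $\sup_{x\in\sample_n}\dis(x,\spa_\nu)=0$ and the Hausdorff distance reduces to the one-sided quantity $\haus(\sample_n,\spa_\nu)=\sup_{y\in\spa_\nu}\dis(y,\sample_n)$. Thus $\{\haus(\sample_n,\spa_\nu)>2\eps\}$ is exactly the event that some point of $\spa_\nu$ has no sample point within distance $2\eps$.

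Next I would replace $\spa_\nu$ by a finite net. Since $\spa_\nu$ is closed in the compact manifold $\man$, it is compact, so a maximal $\eps$-packing $\{y_1,\dots,y_N\}$ exists, i.e. a maximal family of points of $\spa_\nu$ that are pairwise at distance at least $\eps$. Such a family has two properties: by maximality it is an $\eps$-net, so every $y\in\spa_\nu$ lies within $\eps$ of some $y_j$; and the balls $\ball(y_j,\eps/2)$ are pairwise disjoint. Using the net property and the triangle inequality I would establish the event inclusion
$$\{\haus(\sample_n,\spa_\nu)>2\eps\}\subseteq\bigcup_{j=1}^N\{\sample_n\cap\ball(y_j,\eps)=\varnothing\},$$
since if $\dis(y,\sample_n)>2\eps$ and $\dis(y,y_j)\leq\eps$, then every sample point is at distance more than $\eps$ from $y_j$.

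Each of the $N$ events is then controlled by independence. As the $x_i$ are i.i.d. from $\nu$,
$$\mathbb{P}(\sample_n\cap\ball(y_j,\eps)=\varnothing)=(1-\nu(\ball(y_j,\eps)))^n\leq(1-a\eps^b)^n\leq\exp(-na\eps^b),$$
where the first inequality invokes the $(a,b)$-standard bound $\nu(\ball(y_j,\eps))\geq\min(1,a\eps^b)$ (the informative regime being $a\eps^b\leq1$) and the second uses $1-t\leq e^{-t}$. To bound the cardinality $N$, I would apply standardness to the disjoint balls $\ball(y_j,\eps/2)$: their measures sum to at most $\nu(\man)=1$, while each is at least $a(\eps/2)^b=a\eps^b/2^b$, so $N\leq 2^b/(a\eps^b)$. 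A union bound over the $N$ events, combined with the trivial bound of a probability by $1$, yields
$$\mathbb{P}(\haus(\sample_n,\spa_\nu)>2\eps)\leq\min\left(\frac{2^b}{a\eps^b}\exp(-na\eps^b),1\right).$$

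The argument is elementary, so there is no deep obstacle; the points that most require care are the geometric bookkeeping of the net—matching the packing radius $\eps/2$, which produces the $2^b$ prefactor, against the net radius $\eps$, which via the triangle inequality accounts for the $2\eps$ threshold—and the boundary regimes where $a\eps^b\geq1$ or $a(\eps/2)^b\geq1$, for which the outer $\min(\cdot,1)$ and the $\min(1,\cdot)$ appearing in the definition of standardness make the stated inequality hold trivially.
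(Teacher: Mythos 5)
Your proposal is correct: the reduction to the one-sided distance, the event inclusion via a maximal $\eps$-packing of $\spa_\nu$, the independence bound $(1-a\eps^b)^n\leq\exp(-na\eps^b)$, and the cardinality bound $N\leq 2^b/(a\eps^b)$ from the disjoint half-radius balls combine exactly into the stated inequality, with the degenerate regime $a\eps^b\geq 1$ correctly dispatched by the two minima. Note that the paper itself contains no proof of this statement --- it is quoted verbatim as Theorem 2 of \cite{chazal2014convergence} --- and your packing/union-bound argument is essentially the standard proof given in that reference, so there is nothing to flag.
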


\subsubsection{Convergence rate for the volume of a cover element preimage}
We are interested here in an upper bound on the convergence rate of the volume of preimages of refined cover elements:
$$\max_{\mathrm{J}\in\mathbb{J}}\vol\left( \preim{f}{\mathrm{J}}\right).$$
\begin{lemma}\label{lem:meas}
For any large enough resolution $r$, we have:
$$\max_{\mathrm{J}\in\mathbb{J}}\vol\left( \preim{f}{\mathrm{J}}\right)\leq \eta\cdot\coverwidth^{\frac{d}{d+1}},$$
where
$$\eta=\max_{c\in\critic(f)}\mu(c)^2\cdot\sup_{v\notin f(\critic(f))}\vol_{d-1}\left(\preim{f}{\{v\}}\right)+2\alpha_d\left|\critic(f)\right|,$$
and where $\vol_{d-1}\left(\preim{f}{\{v\}}\right)$ is the $(d-1)$-dimensional volume of $\preim{f}{\{v\}}$ seen as a submanifold of $\man$ and $\alpha_d$ is the volume of the unit ball in $\R^d$.

\end{lemma}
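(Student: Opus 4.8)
The plan is to bound $\vol\left(\preim{f}{\mathrm{J}}\right)$ for a single fixed refined cover element $\mathrm{J}=[v,w]$ (with $|w-v|\le\coverwidth$) and then take the maximum over $\mathrm{J}\in\mathbb{J}$. The exponent $\frac{d}{d+1}$ ultimately comes from the degeneration of $\Vert\nabla f\Vert$ near critical points, so the key idea is to excise small geodesic balls around the critical points and treat the resulting two regions separately, choosing the radius of the balls to balance their contributions. Concretely, I would fix a radius $\delta=\delta(r)\to 0$ (to be set to $\coverwidth^{\frac{1}{d+1}}$ at the end) and split
$$\vol\left(\preim{f}{\mathrm{J}}\right)=\vol\left(\preim{f}{\mathrm{J}}\cap\bigcup_{c\in\critic(f)}\ball(c,\delta)\right)+\vol\left(\preim{f}{\mathrm{J}}\setminus\bigcup_{c\in\critic(f)}\ball(c,\delta)\right).$$

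For the first (near-critical) term I would discard the constraint $f\in\mathrm{J}$ and bound it by $\sum_{c\in\critic(f)}\vol(\ball(c,\delta))$. Under Assumption~\ref{hypRicci}, Theorem~\ref{thm:bishop_gromov} combined with the small-ball asymptotics of Proposition~\ref{prop:geo_vol} gives $\vol(\ball(c,\delta))\le v(d,k,\delta)\le 2\alpha_d\,\delta^d$ for $\delta$ small enough, i.e.\ for $r$ large enough. Summing over the at most $|\critic(f)|$ critical points yields a contribution bounded by $2\alpha_d\,|\critic(f)|\,\delta^d$, which becomes exactly $2\alpha_d\,|\critic(f)|\,\coverwidth^{\frac{d}{d+1}}$ once $\delta=\coverwidth^{\frac{1}{d+1}}$; this is the second summand of $\eta$.

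For the second term, write $A=\preim{f}{\mathrm{J}}\setminus\bigcup_{c}\ball(c,\delta)$, which avoids the critical set. The crucial step is a uniform lower bound on $\Vert\nabla f\Vert$ on $A$. For a point $p$ lying in a Morse chart $U_c$ with $\dis(p,c)\ge\delta$, the computation already carried out in the proof of Lemma~\ref{lem:delta} (Morse Lemma and Proposition~\ref{prop:grad}) gives $\Vert\nabla f(p)\Vert\ge 2\Vert x(p)\Vert_0/\mu(p)$; chaining this with the bi-Lipschitz distance comparison of Proposition~\ref{prop:dis_equiv} and shrinking the (finitely many, fixed) charts so that $\sup_{U_c}\mu$ is as close to $\mu(c)$ as needed produces $\Vert\nabla f(p)\Vert\ge \dis(p,c)/\bigl(2\mu(c)^2\bigr)\ge \delta/\bigl(2\max_{c}\mu(c)^2\bigr)$. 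Outside all the charts, $\man$ is compact and free of critical points, so $\Vert\nabla f\Vert$ is bounded below by a positive constant which dominates $\delta/\bigl(2\max_c\mu(c)^2\bigr)$ once $\delta$ is small. Hence $\inf_A\Vert\nabla f\Vert\ge\delta/\bigl(2\max_c\mu(c)^2\bigr)$ for $r$ large. I would then invoke the coarea formula on $A$: since $\int_A\Vert\nabla f\Vert\,d\vol=\int_{\mathrm{J}}\vol_{d-1}\left(A\cap\preim{f}{\{t\}}\right)dt\le\coverwidth\cdot\sup_{v\notin f(\critic(f))}\vol_{d-1}\left(\preim{f}{\{v\}}\right)$ (the critical values in $\mathrm{J}$ form a finite, hence Lebesgue-null, set), one gets
$$\vol(A)\le\frac{1}{\inf_A\Vert\nabla f\Vert}\int_A\Vert\nabla f\Vert\,d\vol\le\frac{2\max_{c}\mu(c)^2}{\delta}\,\coverwidth\,\sup_{v\notin f(\critic(f))}\vol_{d-1}\left(\preim{f}{\{v\}}\right),$$
which for $\delta=\coverwidth^{\frac{1}{d+1}}$ is of the form $(\text{const})\cdot\max_c\mu(c)^2\,\sup_v\vol_{d-1}\left(\preim{f}{\{v\}}\right)\cdot\coverwidth^{\frac{d}{d+1}}$, matching the first summand of $\eta$.

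Adding the two contributions and maximizing over $\mathrm{J}\in\mathbb{J}$ gives the claimed bound $\eta\,\coverwidth^{\frac{d}{d+1}}$, after reconciling the harmless numerical factors (the factor $2$ in the gradient estimate can be made to cancel the $\tfrac12$ needed in the coarea step by shrinking the Morse charts so that $\sup_{U_c}\mu^2$ is arbitrarily close to $\mu(c)^2$). I expect the main obstacle to be precisely this gradient lower bound near critical points: getting the right power of $\mu(c)$ requires carefully composing the Morse-coordinate gradient estimate with Proposition~\ref{prop:dis_equiv}, and one must also verify that $\sup_{v\notin f(\critic(f))}\vol_{d-1}\left(\preim{f}{\{v\}}\right)$ is finite — which holds because near a critical value the level sets are bounded pieces of quadrics sitting inside the compact manifold $\man$. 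The balancing choice $\delta\sim\coverwidth^{\frac{1}{d+1}}$, which equalizes the $\delta^d$ and $\coverwidth/\delta$ regimes, is what produces the exponent $\frac{d}{d+1}$, exactly as predicted by the sharp example in the Remark.
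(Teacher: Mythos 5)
Your proposal is correct and follows essentially the same route as the paper's proof: excise balls of radius $\eps$ around the critical points, bound the excised volume by $2\alpha_d\left|\critic(f)\right|\eps^d$, control the remainder via the coarea formula combined with the gradient lower bound $\Vert\nabla f\Vert\geq\eps/\mu(c)^2$ obtained from the Morse charts and Proposition~\ref{prop:dis_equiv}, and balance the two terms with $\eps=\coverwidth^{\frac{1}{d+1}}$. The only cosmetic differences are that your appeal to Theorem~\ref{thm:bishop_gromov} is unnecessary (Proposition~\ref{prop:geo_vol} applied at the finitely many critical points suffices, which is what the paper does) and your factor-$2$ slack in the gradient estimate, which, as you correctly note, disappears after shrinking the charts so that $\mu_0^2\leq 2\mu(c)^2$.
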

\begin{proof}
We will use the Coarea Formula (see Chapter \RN{2}, Theorem 5.8. in~\cite{sakai1996riemannian}) which states that for every integrable function $u\colon\man\rightarrow\R$, we have:

$$\int_\man u\Vert\nabla f\Vert\,d\vol=\int_{\R}\int_{\preim{f}{\{v\}}}u\,d\vol_v\,dv,$$

where $\vol_v$ is the volume measure induced on the $(d-1)$-dimensional manifold $\preim{f}{\{v\}}$, $v$ being a non-critical value. Note that in the integral in the right hand side of the equation above, we only consider non-critical values $v$ of $f$. This is possible because the set of critical values of $f$ has measure 0 in $\R$ by Sard's Theorem.

Let $\mathrm{J}$ be a refined cover element. Let $\eps>0$ and consider $$\man_\eps=\man\setminus \bigcup_{c \in \critic(f)}\ball\left(c,\eps\right).$$
Applying the Coarea Formula to $u=\ind_{\preim{f}{\mathrm{J}}\cap\man_\eps},$ gives
\begin{align*}
\vol\left(\preim{f}{\mathrm{J}}\cap\man_\eps\right)=\int_\man u\,d\vol&\leq\frac{1}{\inf_{p\in\man_\eps}\Vert\nabla f(p)\Vert}\cdot \int_\man u\Vert\nabla f\Vert\,d\vol\\
&=\frac{1}{\inf_{p\in\man_\eps}\Vert\nabla f(p)\Vert}\cdot\int_{\R}\int_{\preim{f}{\{v\}}}u\,d\vol_v\,dv\\
&\leq\frac{1}{\inf_{p\in\man_\eps}\Vert\nabla f(p)\Vert}\cdot\int_{\mathrm{J}}\int_{\preim{f}{\{v\}}}\,d\vol_v\,dv\\
&\leq \frac{1}{\inf_{p\in\man_\eps}\Vert\nabla f(p)\Vert}\cdot\coverwidth\cdot\sup_{v\notin f(\critic(f))}\vol_{d-1}\left(\preim{f}{\{v\}}\right).
\end{align*}
Moreover, for small enough $\eps$, as $\inf_{p\in\man_\eps}\Vert\nabla f(p)\Vert$ is achieved in a neighborhood of a critical point $c$ at a point $p$ such that $\dis(c,p)\geq\eps$, we have (see the proof of Proposition~\ref{prop:covn}):
$$\inf_{p\in\man_\eps}\Vert\nabla f(p)\Vert \geq \frac{2\eps}{\mu_0^2},$$
where $\mu_0\rightarrow\mu(c)$ as $p\rightarrow c$. As such, for small enough $\eps$, we have
$$\mu_0\leq\sqrt{2}\mu(c),$$
and therefore
$$\inf_{p\in\man_\eps}\Vert\nabla f(p)\Vert \geq \frac{\eps}{\mu(c)^2}.$$
Accordingly,
$$\vol\left(\preim{f}{\mathrm{J}}\cap\man_\eps\right)\leq\frac{\max_{c\in\critic(f)}\mu(c)^2}{\eps}\cdot\coverwidth\cdot\sup_{v\notin f(\critic(f))}\vol_{d-1}\left(\preim{f}{\{v\}}\right).$$
Now, by Proposition~\ref{prop:geo_vol}, for every $c\in\critic(f)$, there exists $\eps'_c$ such that for every $\eps\leq\eps'_c$ we have
$$\vol\left(\ball\left(c,\eps\right)\right)\leq 2\alpha_d\cdot\eps^d.$$
Taking $\eps'=\min_{c\in\critic(f)}\eps'_c$, we have that for $\eps\leq\eps'$:
$$\sum_{c\in\critic(f)}\vol\left(\ball\left(c,\eps\right)\right)\leq 2\alpha_d\left|\critic(f)\right|\cdot\eps^d.$$
As such, for small enough $\eps>0$, we have
\begin{align*}
\vol\left(\preim{f}{\mathrm{J}}\right)&\leq\vol\left(\preim{f}{\mathrm{J}}\cap\man_\eps\right)+\sum_{c\in\critic(f)}\vol\left(\ball\left(c,\eps\right)\right)\\
&\leq\frac{\max_{c\in\critic(f)}\mu(c)^2}{\eps}\cdot\coverwidth\cdot\sup_{v\notin f(\critic(f))}\vol_{d-1}\left(\preim{f}{\{v\}}\right)+2\alpha_d\left|\critic(f)\right|\cdot\eps^d.
\end{align*}

Since $\coverwidth\underset{r\rightarrow\infty}{\longrightarrow}0$, we can choose $$\eps=\coverwidth^{\frac{1}{d+1}},$$
and we will have, for a large enough resolution $r$,
$$\vol\left(\preim{f}{\mathrm{J}}\right)\leq\left[\max_{c\in\critic(f)}\mu(c)^2\cdot\sup_{v\notin f(\critic(f))}\vol_{d-1}\left(\preim{f}{\{v\}}\right)+2\alpha_d\left|\critic(f)\right|\right]\cdot\coverwidth^{\frac{d}{d+1}}.$$
\end{proof}

\subsubsection{Convergence rate for the approximation error}

We now put together Lemmas~\ref{lem:delta}, \ref{lem:slice} and \ref{lem:meas} in the following proposition.

\begin{proposition}\label{prop:apprx}
Let $\lambda>0$ such that
$$\omega_f\left(\lambda\right)\leq\frac{g}{1-g}\cdot\frac{\coverwidth}{4}.$$
For any large enough resolution $r$, we have
$$\mathbb{E}\left(\appr_n^p\right)\leq \left(\lambda+\xi\cdot\sqrt{\coverwidth}\right)^p
    +\tilde{\eta}\cdot\coverwidth^{\frac{d}{d+1}}+\frac{\zeta}{\lambda^d}\cdot\exp\left(\frac{-na\lambda^d}{2^d}\right),$$
    where
    $$\xi=\max_{c\in\critic(f)}\mu(c)\cdot\sqrt{\frac{1-g}{g}},$$
    $$\tilde{\eta}=3 |\critic(f)| \left[\sup\frac{d\mes}{d\vol}\right]\cdot\diam(\man)^p\cdot\eta,$$
    $$\eta=\max_{c\in\critic(f)}\mu(c)^2\cdot\sup_{v\notin f(\critic(f))}\vol_{d-1}\left(f^{-1}\{v\}\right)+2\alpha_d\left|\critic(f)\right|,$$
    $$\zeta=\frac{4^d\diam(\man)^p}{a}.$$

\end{proposition}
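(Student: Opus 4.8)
The plan is to assemble the three preceding lemmas into a bound on $\appr_n^p=\frac1n\sum_{i=1}^n\haus([x_i]_{\sim_f},\DD(x_i))^p$ by splitting the sample points according to (i) whether they fall in a ``good'' refined cover element and (ii) whether the global sampling event $E:=\{\haus(\sample_n,\man)\le\lambda\}$ holds. First I would note that, by definition, $\haus([x_i]_{\sim_f},\DD(x_i))$ equals the maximum of $\sup_{x_j\in\DD(x_i)}\dis(x_j,[x_i]_{\sim_f})$ and $\sup_{y\in[x_i]_{\sim_f}}\dis(y,\DD(x_i))$. Lemma~\ref{lem:delta} bounds the former by $\xi\sqrt{\coverwidth}$, and Lemma~\ref{lem:slice} bounds the latter by $\lambda+\frac{1}{2}\xi\sqrt{\coverwidth}$, both whenever $x_i$ lies in a refined element $\mathrm{J}$ that neither contains nor is adjacent to an element containing a critical value, and (for Lemma~\ref{lem:slice}) on the event $E$. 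Hence on $E$ every such ``good'' point satisfies $\haus([x_i]_{\sim_f},\DD(x_i))\le\lambda+\xi\sqrt{\coverwidth}$, with $\xi=\max_{c\in\critic(f)}\mu(c)\sqrt{\frac{1-g}{g}}$.

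Next I would isolate the ``bad'' points. Let $B\subseteq\R$ be the union of those refined elements of $\mathbb{J}$ that contain a critical value or are adjacent to one that does. Because the elements of $\mathbb{J}$ tile the filter range and meet only at endpoints, each of the finitely many critical values renders bad only a bounded number of refined elements (itself and its immediate neighbours), so $B$ consists of at most $3\,|\critic(f)|$ refined elements. The quantitative heart is then to show that $\mes(\preim{f}{B})$ is small: combining Assumption~\ref{hypdensity} (so $\mes\le[\sup\frac{d\mes}{d\vol}]\cdot\vol$), subadditivity of $\vol$ over the at most $3|\critic(f)|$ elements, and Lemma~\ref{lem:meas} (which gives $\max_{\mathrm{J}\in\mathbb{J}}\vol(\preim{f}{\mathrm{J}})\le\eta\,\coverwidth^{\frac{d}{d+1}}$), I obtain $\mes(\preim{f}{B})\le 3|\critic(f)|\,[\sup\frac{d\mes}{d\vol}]\,\eta\,\coverwidth^{\frac{d}{d+1}}$.

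With these pieces I would write $\mathbb{E}(\appr_n^p)=\mathbb{E}(\appr_n^p\,\ind_E)+\mathbb{E}(\appr_n^p\,\ind_{E^c})$. On $E$, I separate the average into good and bad points: the good terms are each at most $(\lambda+\xi\sqrt{\coverwidth})^p$, and bounding their number by $n$ yields the first summand; the bad terms are each at most $\diam(\man)^p$, and after taking expectations the expected fraction of bad points is exactly $\mes(\preim{f}{B})$, so that $\diam(\man)^p\,\mes(\preim{f}{B})\le\tilde{\eta}\,\coverwidth^{\frac{d}{d+1}}$ with $\tilde{\eta}=3|\critic(f)|[\sup\frac{d\mes}{d\vol}]\diam(\man)^p\eta$. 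For the complementary event I use the crude bound $\appr_n^p\le\diam(\man)^p$ together with Theorem~\ref{thm:ab}, applied with $\eps=\lambda/2$ and $b=d$ since $\mes$ is $(a,d)$-standard, giving $\mathbb{P}(E^c)\le\frac{4^d}{a\lambda^d}\exp(-na\lambda^d/2^d)$ and hence the last summand with $\zeta=4^d\diam(\man)^p/a$. Summing the three contributions produces the claimed inequality.

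I expect the main difficulty to be the careful bookkeeping around the critical values rather than any single estimate: one must verify that the adjacency hypothesis of Lemmas~\ref{lem:delta} and~\ref{lem:slice} excludes only a bounded number of refined elements per critical value, so that the unavoidable crude $\diam(\man)^p$ bound is paid only on a region of measure $O(\coverwidth^{\frac{d}{d+1}})$, and to cleanly separate the deterministic geometric estimates (valid on $E$) from the randomness of the density hypothesis $\haus(\sample_n,\man)\le\lambda$, which is absorbed into the exponentially small tail on $E^c$.
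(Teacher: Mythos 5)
Your proposal is correct and follows essentially the same route as the paper: the same three-way split (good refined elements via Lemmas~\ref{lem:delta} and~\ref{lem:slice}, bad elements $\mathbb{J}_c$ with $|\mathbb{J}_c|\leq 3|\critic(f)|$ controlled by Lemma~\ref{lem:meas} and Assumption~\ref{hypdensity}, and the tail event $\haus(\sample_n,\man)>\lambda$ via Theorem~\ref{thm:ab} with $\eps=\lambda/2$), yielding the same three terms and constants. The only cosmetic difference is that the paper first reduces $\mathbb{E}(\appr_n^p)$ to $\mathbb{E}\left(\haus([x_1]_{\sim_f},\DD(x_1))^p\right)$ by exchangeability and bounds pointwise with indicators, while you keep the empirical average and split by the event $E$ — equivalent bookkeeping (note only that on $E$ the expected fraction of bad points is at most, not exactly, $\mes(\preim{f}{B})$, which is all you need).
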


\begin{proof}
By definition,
$$\appr_n^p=\frac{1}{n}\sum_{i=1}^n\haus([x_i]_{\sim_f},\DD(x_i))^p.$$
Since $\sample_n$ consists of i.i.d. samples:
$$\mathbb{E}\left(\appr_n^p\right)=\mathbb{E}\left(\haus([x_1]_{\sim_f},\DD(x_1))^p\right).$$

First, if $\haus(\man,\sample_n)>\lambda$, we use the loose bound $$\haus([x_1]_{\sim_f},\DD(x_1))\leq\diam(\man).$$
Now, assume that $\haus(\man,\sample_n)\leq\lambda$.
We will denote $\mathbb{J}_c\subseteq\mathbb{J}$ as the collection of elements $\mathrm{J}$ in $\mathbb{J}$ that either contain critical values, or are adjacent to elements containing critical values. We see that $|\mathbb{J}_c|\leq 3|\critic(f)|$.

\begin{itemize}
	\item If $f(x_1)\in \bigcup_{\mathrm{J}\in  \mathbb{J}_c} \mathrm{J} $, we use the bound $\haus([x_1]_{\sim_f},\DD(x_1))\leq\diam(\man)$.
    \item If $f(x_1) \notin \bigcup_{\mathrm{J}\in  \mathbb{J}_c} \mathrm{J}  $, we can use Lemmas~\ref{lem:delta} and \ref{lem:slice} to prove that
    $$\haus([x_1]_{\sim_f},\DD(x_1))\leq\lambda+\xi\cdot\sqrt{\coverwidth}.$$
\end{itemize}

As such,
$$\haus([x_1]_{\sim_f},\DD(x_1))^p\leq
\left(\lambda+\xi\cdot\sqrt{\coverwidth}\right)^p+
\ind_{f(x_1) \in \bigcup_{\mathrm{J}\in  \mathbb{J}_c} \mathrm{J} }\cdot \diam(\man)^p+\ind_{\haus(\man,\sample_n)>\lambda}\cdot \diam(\man)^p.$$
Accordingly,
\begin{align*}
    \mathbb{E}\left(\haus([x_1]_{\sim_f},\DD(x_1))^p\right)\leq&
    \left(\lambda+\xi\cdot\sqrt{\coverwidth}\right)^p+
    \mathbb{P}\left(f(x_1) \in \bigcup_{\mathrm{J}\in  \mathbb{J}_c} \mathrm{J} \right)\cdot \diam(\man)^p\\
    &+\mathbb{P}\left(\haus(\man,\sample_n)>\lambda\right)\cdot \diam(\man)^p,
\end{align*}
and by Lemma~\ref{lem:meas} we obtain
\begin{align*}
    \mathbb{P}\left(f(x_1) \in \bigcup_{\mathrm{J}\in  \mathbb{J}_c} \mathrm{J}  \right)&=\mes\left( \preim{f}{\bigcup_{\mathrm{J}\in\mathbb{J}_c}\mathrm{J}}\right)\\
    &\leq \sum_{\mathrm{J}\in\mathbb{J}_c} \max_{\mathrm{J}\in\mathbb{J}}\mes\left( \preim{f}{\mathrm{J}}\right)\\
    &\leq 3|\critic(f)|\cdot\max_{\mathrm{J}\in\mathbb{J}}\mes\left(\preim{f}{\mathrm{J}}\right)\\
    &\leq 3|\critic(f)|\left[\sup\frac{d\mes}{d\vol}\right]\cdot\max_{\mathrm{J}\in\mathbb{J}}\vol\left(\preim{f}{\mathrm{J}}\right)\\
    &\leq 3|\critic(f)|\left[\sup\frac{d\mes}{d\vol}\right]\eta\cdot\coverwidth^{\frac{d}{d+1}}.
\end{align*}
The measure $\mes$ satisfies the $(a,b)$-standard assumption for $b=d$ and it is fully supported on $\man$ by Assumption~\ref{hypdensity}. Using Theorem~\ref{thm:ab}, we have:

$$\mathbb{P}\left(\haus(\man,\sample_n)>\lambda\right)\leq\min\left(\frac{4^d}{a\lambda^d}\exp\left(\frac{-na\lambda^d}{2^d}\right),1\right).$$

Combining all three bounds leads to the result.

\end{proof}

\label{sec:rates}
%
%
%
%
%
%
%
\section{Computing the Gromov-Wasserstein distance for Mapper graphs in practice}\label{sec:appl}


We conclude this article with a section providing some numerical experiments
 on both our metric measure space version of the Mapper graph $(\mapper,\haus,\dirac\circ\DD^{-1})$, 
 as well as the $\gromov_p$ distances between them. In order to achieve this,  
 we use optimal transport libraries for computing the $\gromov_p$ distances. 
 We emphasize that this section contains merely toy experiments; a more comprehensive study would be needed to explore the relevance of such considered transport metrics for Reeb inference with Mapper in the general context of  machine learning and data science.

Since it is finite, the  metric space structure of the Mapper $(\mapper,\haus,\dirac\circ\DD^{-1})$ can be summarized in a distance matrix $\mathrm{D}$ that contains the pairwise Hausdorff distances between the simplices of the Mapper. 
Furthermore, the measure $\dirac\circ\DD^{-1}$ being discrete, it can be represented with a vector $\mathrm{P}$ storing the measures associated to each simplex. 
Finally, we use the \texttt{POT} Python library~\cite{flamary2021pot} to compute the Gromov-Wasserstein distances between different Mapper graphs.\footnote{Note that this library actually computes the $\gromov_p$ formulation of~\cite{memoli2011gromov}.}\\
Our code is publicly available at the following repository~\cite{Oulhaj_Mapper_Gromov-Wasserstein_distance}.

\subsection{Change of filter function}
We first provide an experiment where Mapper graphs are computed on a $3$-dimensional point cloud $\sample_n$ representing a human shape. See Figure~\ref{fig:human}.

\begin{figure}[t]
    \centering
    \includegraphics[width=.3\textwidth]{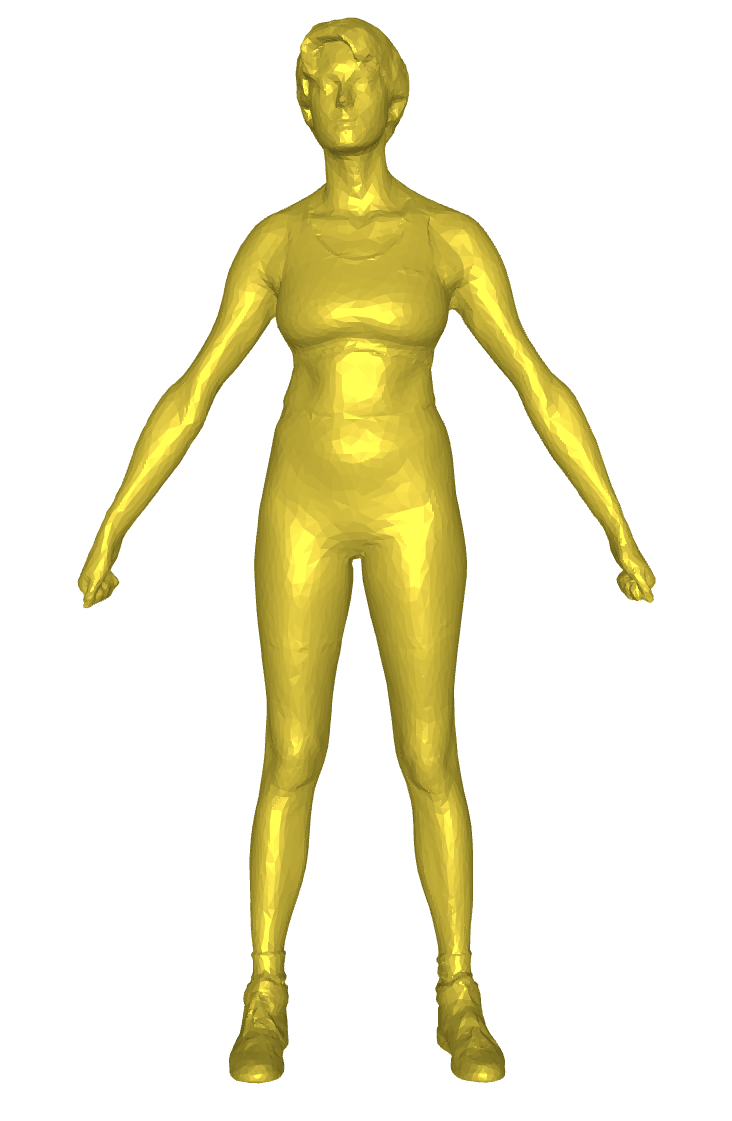}\hspace{0.05\textwidth}
    \includegraphics[width=.4\textwidth]{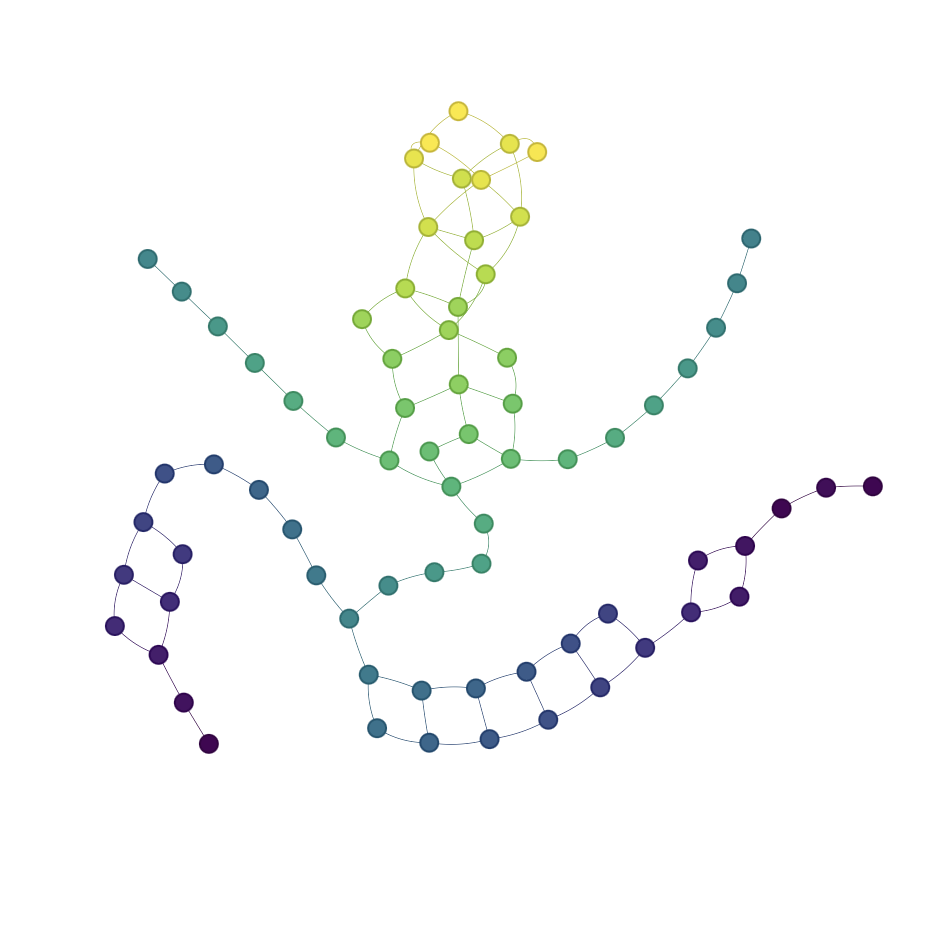}
    \caption{Left: Mesh of a $3$-dimensional point cloud representing a human shape. Right: Mapper graph computed on the point cloud using the height as filter function. Nodes are colored using the average value of the filter.}
    \label{fig:human}
\end{figure}

We then explicitly compute 2-Gromov-Wasserstein distances (as the probability measure is fixed) between different Mapper graphs on this point cloud that are obtained using the same clustering algorithm (KMeans with three clusters), the same gain $g=0.3$ and resolution $r=25$, and varying the filter function. More precisely, we use filter functions from a parametrized family $\{f_t\}_t$ defined with
$$f_t(x)=\langle x,t\cdot u+(1-t)\cdot v\rangle,$$
with $u$ and $v$ being the unit vectors associated to the horizontal and vertical directions respectively. For reference, the Mapper computed with filter function $f_0$ is displayed in Figure~\ref{fig:human}. 

In Figure~\ref{fig:ot_filter}, we plot the resulting 2-Gromov-Wasserstein distances between the Mapper graph computed with $f_0$ and the Mapper graphs computed with $f_t$ with respect to $\Vert f_0-f_t\Vert_{\infty}=\sup_{x\in\mathbb{X}}|f_0(x)-f_t(x)|$.
As one can see, the distances increase monotonically, indicating that Gromov-Wasserstein distances between Mapper graphs might be stable w.r.t. perturbations of filter functions - a theoretical direction that we aim at pursuing in future work.

\begin{figure}[t]
    \centering
    \includegraphics[width=.5\textwidth]{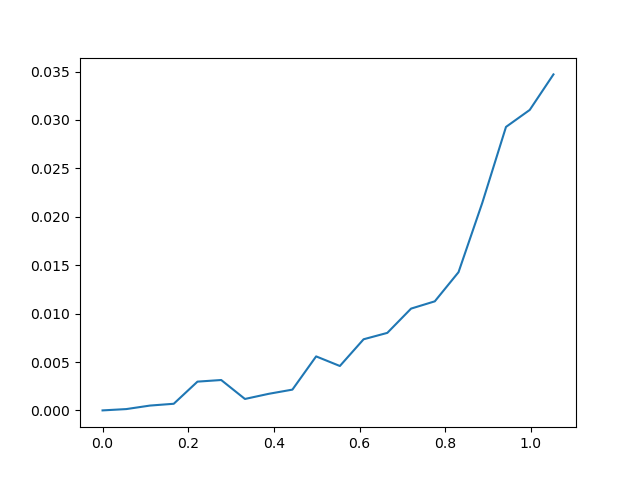}
    \caption{2-Gromov-Wasserstein distances between the Mapper graph computed with $f_0$ and the Mapper graphs computed with $f_t$ with respect to $\Vert f_0-f_t\Vert_{\infty}$.}
    \label{fig:ot_filter}
\end{figure}

\subsection{Change of measure}
In this section,
we illustrate the effect of varying the underlying probability measures, 
by computing Mapper graphs on point clouds sampled from a torus $\torus$ with different probability measures. The torus $\torus$ is a two dimensional manifold that can be parametrized with two angles $(\theta,\phi)\in[0,2\pi]^2$ and can be embedded in $\R^3$ with
\begin{align*}
\embd\colon[0,2\pi]^2&\longrightarrow\R^3\\
(\theta,\phi)&\longmapsto((a+b\cos(\theta))\cos(\phi),(a+b\cos(\theta))\sin(\phi),b\sin(\theta))
\end{align*}
where $a,b>0$ are two radius parameters.
We use the height function $f$ as filter, which is given by the projection of $\embd$ over its first coordinate, i.e., $f\colon(\theta,\phi)\mapsto(a+b\cos(\theta))\cos(\phi)$. It can easily be checked that $f$ is a Morse function, by differentiating twice over $\theta$ and $\phi$.
Moreover, 
the Riemannian metric $\g$ induced by the torus embedding in $\R^3$, 
is given at any point $(\theta,\phi)$ by the positive definite matrix
$$\G(p)=\begin{pmatrix}
b^2 & 0 \\
0 & (a+b\cos(\theta))^2
\end{pmatrix}.$$
We will consider several probability measures $\mes_{p,q}$ on $\torus$ that are parametrized by two values $0<p,q<1$, that give the proportion of points sampled in the region $\phi\in[5\pi/6,7\pi/6]$, and the proportion in the region $\phi\in[0,\pi/6]\cup[11\pi/6,2\pi]$, respectively. More explicitly, we define
$$d\mes_{p,q}(\theta,\phi)=\frac{u_{p,q}(\phi)(a+b\cos(\theta))d\theta d\phi}{2\pi a},$$

where
$$u_{p,q}(\phi)=p\frac{3}{\pi}\ind_{[5\pi/6,7\pi/6]}(\phi)+q\frac{3}{\pi}\ind_{[0,\pi/6]\cup[11\pi/6,2\pi]}(\phi)+(1-p-q)\frac{3}{4\pi}\ind_{[\pi/6,5\pi/6]\cup[7\pi/6,11\pi/6]}(\phi).$$
Notice that the special case $p=q=1/6$ corresponds to the normalized volume measure on $\torus$.
We fix $a=0.75$ and $b=0.25$, and we pick values for $p$ and $q$ in the $3\times 3$ grid $[1/9,1/6,1/3]^2$, resulting in nine different probability measures. Then, we sample $n_{p,q}=2\cdot10^5$ points from each probability measure $\mes_{p,q}$, and construct the corresponding Mapper graphs $\{\mapper_{p,q}\}_{p,q}$ using resolution $r=30$ and gain $g=0.3$ for all $(p,q)$ pairs. See Figure~\ref{fig:torus} for an example of three of these samples, and Figure~\ref{fig:mapper_torus} for the corresponding Mapper graphs. 

First, remark that the Mapper graphs $\{\mapper_{p,q}\}_{p,q}$ are all identical as (combinatorial) simplicial complexes. 
However, as they are different as metric measure spaces, when considering the $2$-Gromov-Wasserstein distances $\Hat{\gromov_2}$ between these metric measure spaces $\{(\mapper_{p,q},\haus,\mes_{p,q}\circ\DD^{-1})\}_{p,q}$, we  obtain a $9\times 9$ distance matrix 
with nonzero entries.
In order to visualize it,
we perform multi-dimensional scaling (MDS) to visualize this distance matrix as a 2-dimensional point cloud in Figure~\ref{fig:mds}. The correspondence between the point labels in Figure~\ref{fig:mds} and the corresponding $(p,q)$ parameters  
is given in Table~\ref{tab:corr}. In Figure~\ref{fig:mds}, we can see three pairs of points, namely $(b,d)$, $(c,g)$ and $(f,h)$, that are close in the MDS representation, illustrating the fact that 
each of these pairs corresponds to two symmetric parameter pairs $(p,q)$
that induce two symmetrical underlying probability measures on $\torus$ with respect to the vertical axis. One can also see that the points labeled $a$ and $i$, which correspond to those two measures that assign the least and the most mass in the two regions of the torus, respectively, are the ones that are the farthest away from the remaining points, indicating that the corresponding Mapper graphs computed using these two measures are the most different from the others. 

\begin{figure}[H]
    \centering
    \includegraphics[width=.25\textwidth]{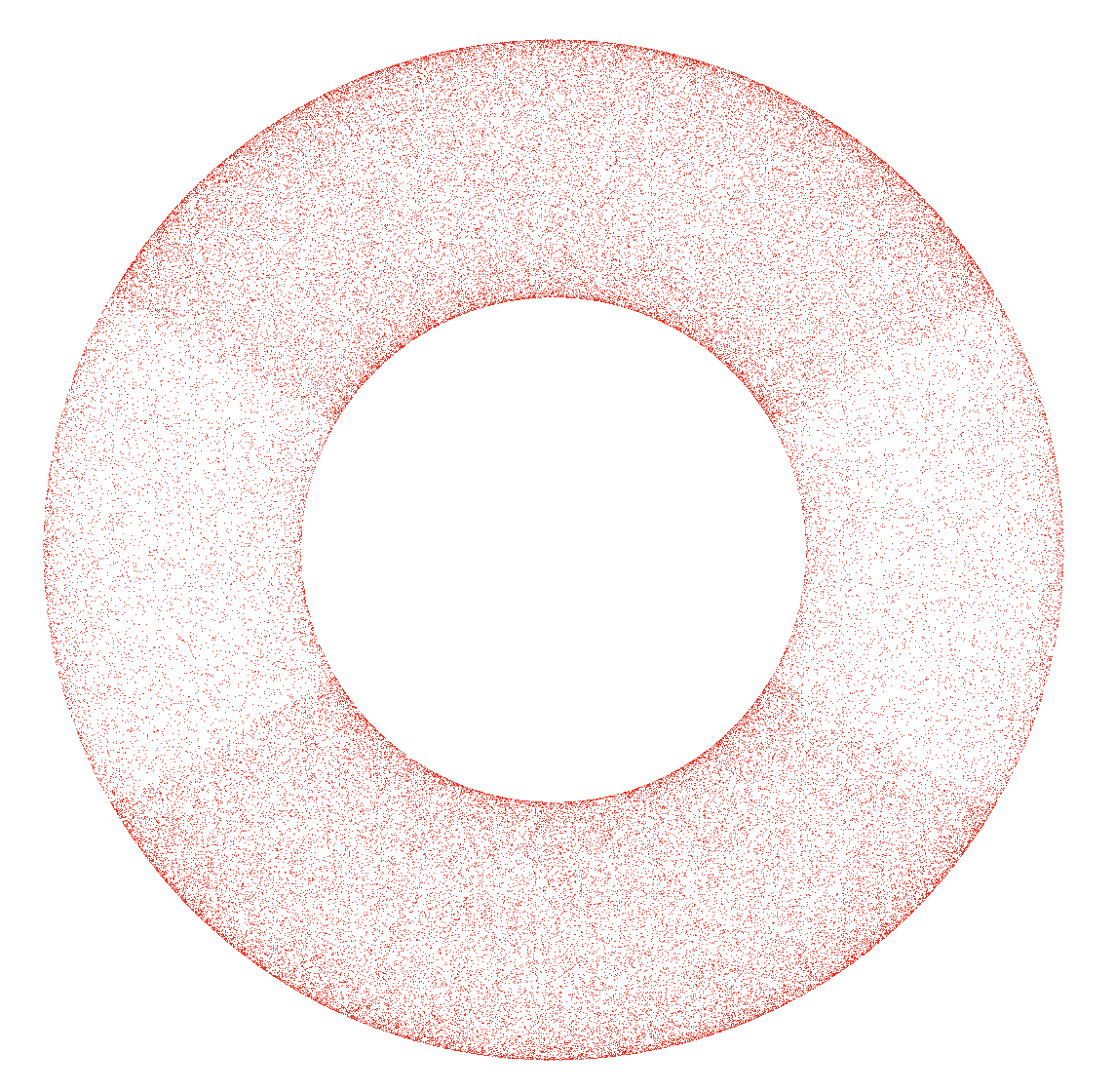}\hspace{0.05\textwidth}
    \includegraphics[width=.25\textwidth]{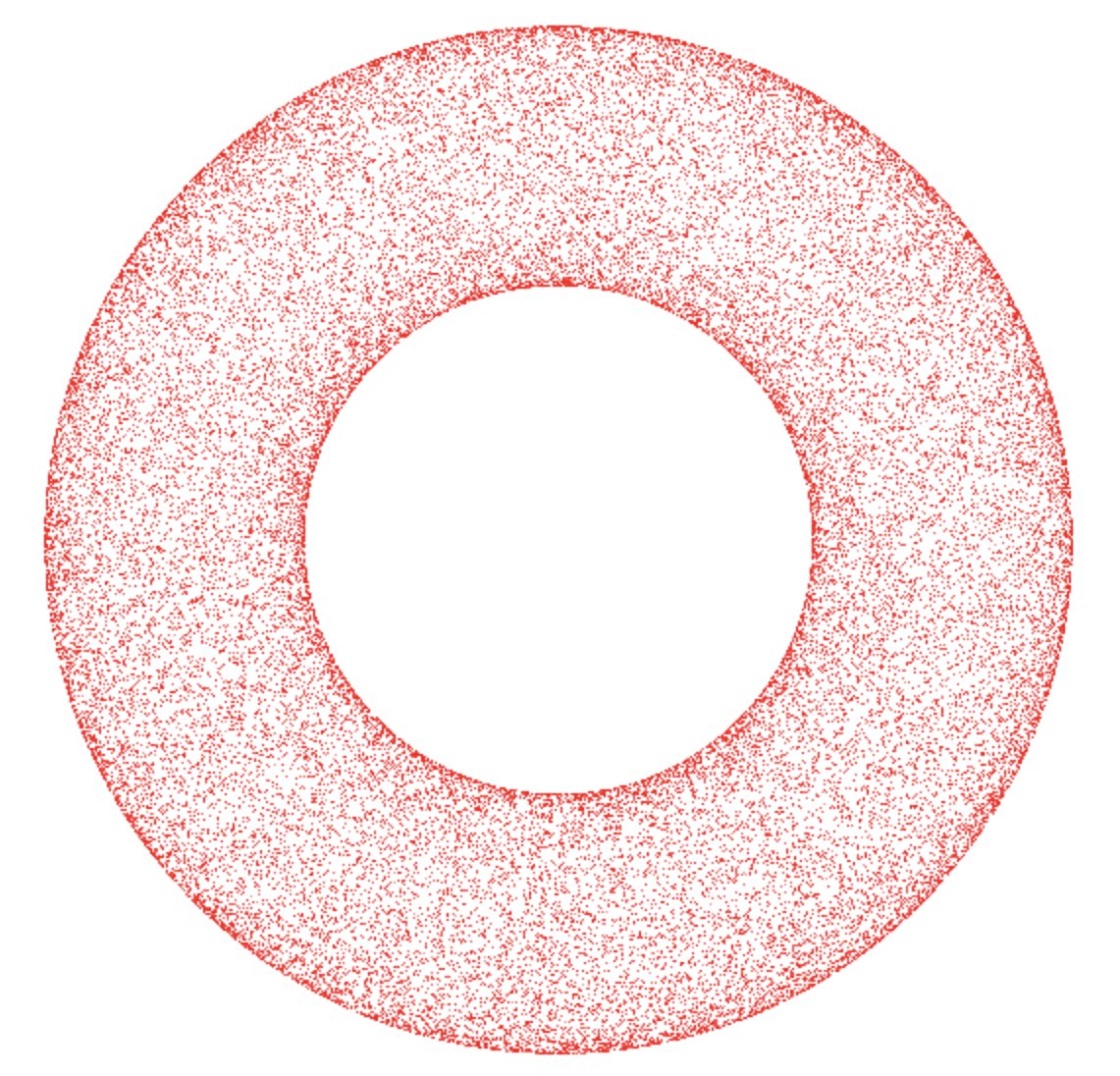}
    \hspace{0.05\textwidth}
    \includegraphics[width=.25\textwidth]{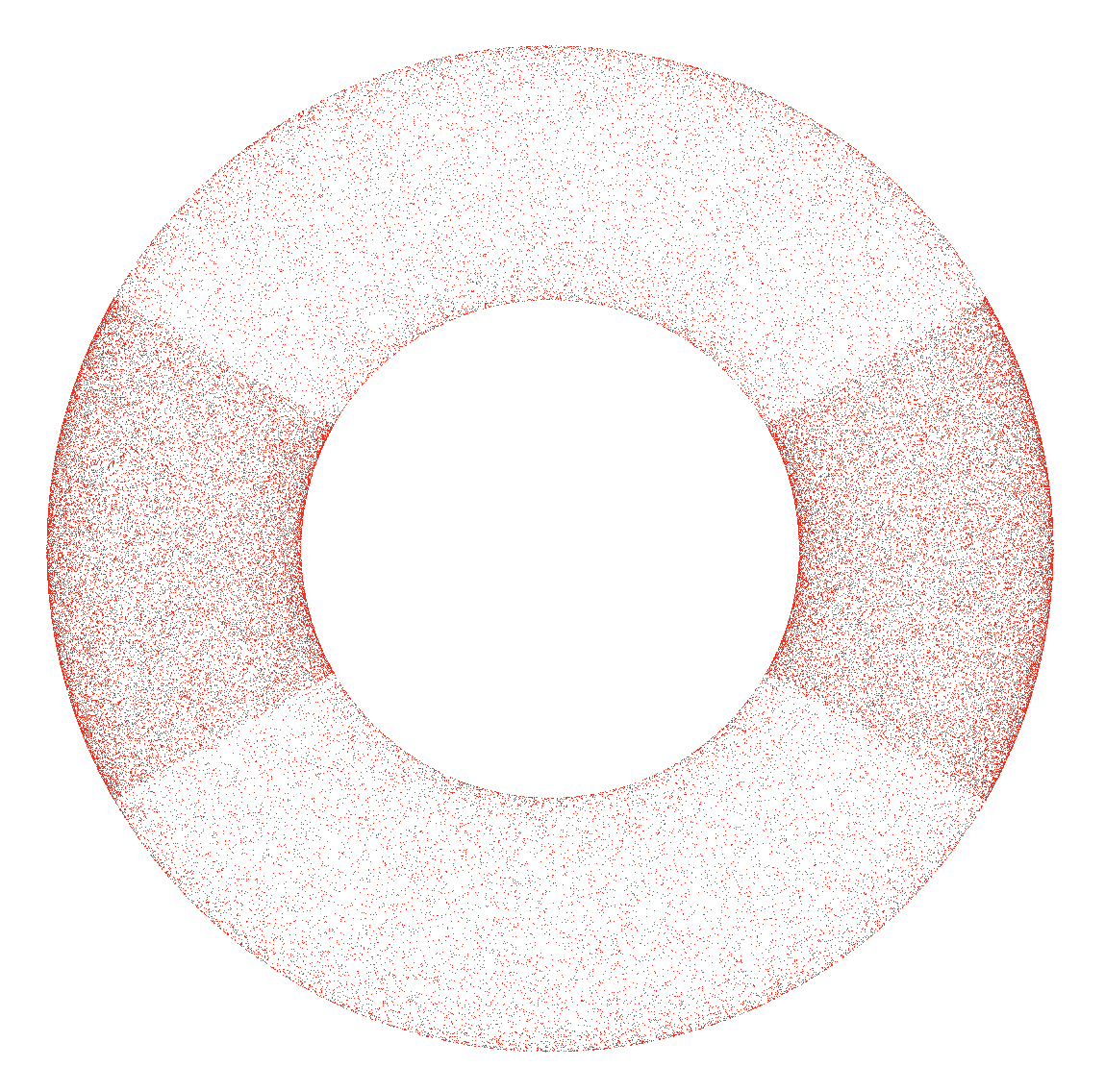}
    \caption{Point clouds sampled from $\mes_{p,q}$ for different values of $p$ and $q$. Left: $p=q=1/12$. Middle: $p=q=1/6$ (uniform measure). Right: $p=q=1/3$.}
    \label{fig:torus}
\end{figure}

\begin{figure}[H]
    \centering
    \includegraphics[width=.25\textwidth]{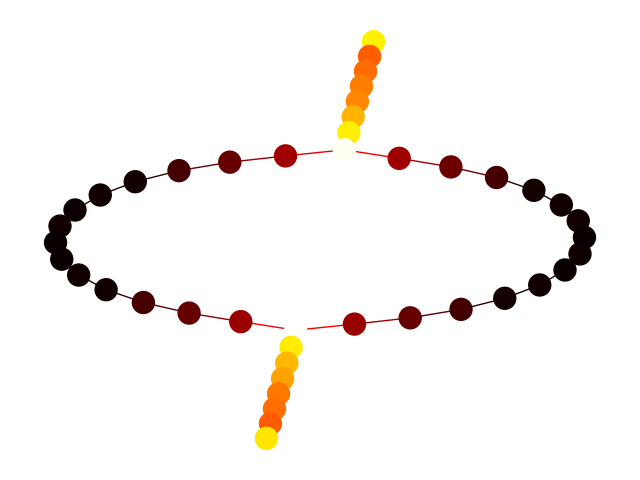}\hspace{0.05\textwidth}
    \includegraphics[width=.25\textwidth]{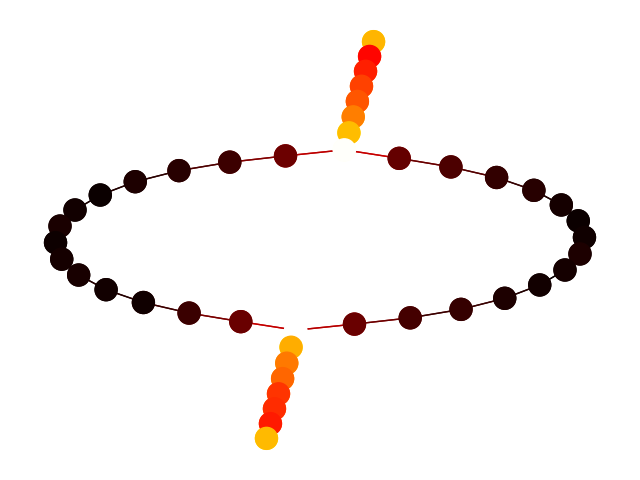}
    \hspace{0.05\textwidth}
    \includegraphics[width=.25\textwidth]{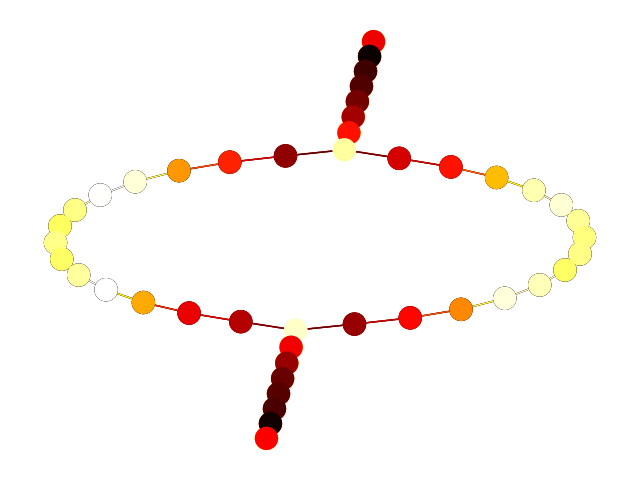}
    \caption{Mapper graphs computed on the samples shown in Figure~\ref{fig:torus}. Vertices and edges are colored using their associated masses. Left: $p=q=1/12$. Middle: $p=q=1/6$ (uniform measure). Right: $p=q=1/3$.}
    \label{fig:mapper_torus}
\end{figure}
\begin{figure}[H]
    \centering
    \includegraphics[width=.5\textwidth]{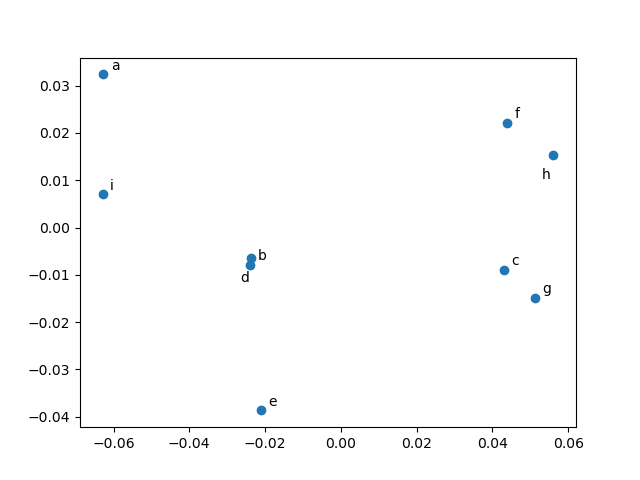}
    \caption{MDS visualization of the Gromov-Wasserstein distance matrix between the different Mapper graphs.}
    \label{fig:mds}
\end{figure}

\begin{table}[H]
\begin{center}
\begin{tabular}{ |c|c|c| } 
 \hline
 Label & p & q \\
 \hline
 a & 1/12 & 1/12 \\ 
 \hline
 b & 1/12 & 1/6 \\ 
 \hline
 c & 1/12 & 1/3 \\ 
 \hline
 d & 1/6 & 1/12 \\ 
 \hline
 e & 1/6 & 1/6 \\ 
 \hline
 f & 1/6 & 1/3 \\ 
 \hline
 g & 1/3 & 1/12 \\ 
 \hline
 h & 1/3 & 1/6 \\ 
 \hline
 i & 1/3 & 1/3 \\ 
 \hline
\end{tabular}
\caption{Correspondence between the labels of Figure~\ref{fig:mds} and the parameters of $\mes_{p,q}$.}
\label{tab:corr}
\end{center}
\end{table}

  \label{sec:expe}

\section{Acknowledgements. }


The authors would like to thank Gilles Carron and Stéphane Guillermou for their helpful insight and valuable discussions.\\
The research was supported by two grants from Agence Nationale de la Recherche: ANR JCJC TopModel ANR-23-CE23-0014 and ANR GeoDSIC ANR-22-CE40-0007.
M.C. was also supported by the French government, through the 3IA Cote d’Azur Investments in the project
managed by the National Research Agency (ANR) with the reference number ANR-23-IACL-0001.

\newpage
\appendix
\section{Proofs for Section~\ref{sec:background}}\label{apdx:proofs}
\subsection{Proof of Proposition~\ref{prop:g_equiv}}
Denoting $w=(dx_1(v),...,dx_d(v))$, we have:

$$\g(v,v)=w^T\cdot \G(p)\cdot w,$$
and 

$$\g_0(v,v)=w^T\cdot w,$$

where $\G(p):=\left[\g_{i,j}\right]_{i,j}$ is the symmetric positive-definite matrix given by $\g_{i,j}=\g(\partial_{i_{|p}},\partial_{j_{|p}})$.\\

Hence,
$$\lambda(p)\cdot \sqrt{\g_0(v,v)}\leq \sqrt{\g(v,v)}\leq \mu(p)\cdot \sqrt{\g_0(v,v)}$$

\subsection{Proof of Proposition~\ref{prop:dis_equiv}}

Let $p\in\man$ and let $U$ be a coordinate neighborhood of $p$.
Up to shrinking $U$, we can assume that:
$$U=\{q\in\man\, ,\, \dis_0(p,q)\leq\eps\},$$
for a given, small enough $\eps>0$, and where $\dis_0$ is the distance associated to the metric $\g_0$, given by 
$$\dis_0(p,q)=\sqrt{x_1(q)^2+\dots+x_d(q)^2},$$
where $q$ is given in local coordinates by $x(q)=(x_1(q),...,x_d(q))$.

From Proposition~\ref{prop:g_equiv}, there exist continuous functions $\lambda,\mu\colon U\rightarrow(0,\infty)$ such that for every $q\in U$ and $v\in\tangent_q\man$, one has
$$\lambda(q)\cdot \sqrt{\g_0(v,v)}\leq \sqrt{\g(v,v)}\leq \mu(q)\cdot \sqrt{\g_0(v,v)}.$$
Furthermore, $\lambda(q),\mu(q)\rightarrow \lambda(p),\mu(p)$ as $q\rightarrow p$ because $\lambda$ and $\mu$ are continuous.

Now, let $\gamma\colon[0,1]\rightarrow\man$ be a piecewise-$C^{\infty}$ curve such that $\gamma(0)=p$ and $\gamma(1)=q\in U$.

\begin{itemize}
\item If $\gamma$ is a segment for $\dis_0$, then it lies in $U$ (as $\dis_0(p,q)\leq\eps$). We also have:
\begin{align*}
\dis_0(p,q)&=\int_0^1\sqrt{\g_0\left(\frac{d\gamma(t)}{dt},\frac{d\gamma(t)}{dt}\right)}\, dt\\
&\geq \frac{1}{\sup_{t\in[0,1]}\ \mu(\gamma(t))}\int_0^1\sqrt{\g\left(\frac{d\gamma(t)}{dt},\frac{d\gamma(t)}{dt}\right)}\, dt\\
&=\frac{\length(\gamma)}{\sup_{t\in[0,1]}\ \mu(\gamma(t))}\\
&\geq \frac{\dis(p,q)}{\sup_{t\in[0,1]}\ \mu(\gamma(t))}.
\end{align*}
\item If $\gamma$ lies inside $U$, then:
\begin{align*}
\length(\gamma)&=\int_0^1\sqrt{\g\left(\frac{d\gamma(t)}{dt},\frac{d\gamma(t)}{dt}\right)}\, dt\\
&\geq \inf_{t\in[0,1]}\ \lambda(\gamma(t)) \cdot \int_0^1\sqrt{\g_0\left(\frac{d\gamma(t)}{dt},\frac{d\gamma(t)}{dt}\right)}\, dt\\
&\geq \inf_{t\in[0,1]}\ \lambda(\gamma(t))\cdot\dis_0(p,q)
\end{align*}
\item If $\gamma$ leaves $U$, then there exists a smallest $t_0$ such that $\gamma(t_0)\notin U$. We have:
\begin{align*}
\length(\gamma)&\geq\int_0^{t_0}\sqrt{\g\left(\frac{d\gamma(t)}{dt},\frac{d\gamma(t)}{dt}\right)}\, dt\\
&\geq \inf_{t\in[0,1]}\ \lambda(\gamma(t))\cdot\int_0^{t_0}\sqrt{\g_0\left(\frac{d\gamma(t)}{dt},\frac{d\gamma(t)}{dt}\right)}\, dt\\
&\geq \inf_{t\in[0,1]}\ \lambda(\gamma(t))\cdot\eps\\
&\geq \inf_{t\in[0,1]}\ \lambda(\gamma(t))\cdot\dis_0(p,q)
\end{align*}
Like mentioned above, $\lambda(q),\mu(q)\rightarrow \lambda(p),\mu(p)$ as $q\rightarrow p$, therefore (and by shrinking $U$ again if necessary) we can ensure that $\lambda_0=\inf_{r\in U}\lambda(r)>0$ and that $\mu_0=\sup_{r\in U}\mu(r)<\infty$.

We have proved that for every $q\in U$, one has
$$\lambda_0\cdot\dis_0(p,q)\leq \dis(p,q)\leq\mu_0\cdot\dis_0(p,q),$$
and we finally see that, as $q\rightarrow p$:
$$\lambda_0,\mu_0\rightarrow \lambda(p),\mu(p).$$
\end{itemize}

\subsection{Proof of Proposition~\ref{prop:geo_vol}}
Let us consider a chart $\varphi\colon U\rightarrow\R^d$ around $p$ such that $\varphi(p)=(0,...,0)$ and 

$$\g(\partial_{i_{|p}},\partial_{j_{|p}})=
\begin{cases}
1, \text{ if } i=j\\
0, \text{ otherwise}
\end{cases}$$

so as to have $\g_p=\g_0$, where $\g_0$ is the Euclidean metric in local coordinates.

For a small enough $\eps>0$ and by Proposition~\ref{prop:dis_equiv}, we know that 
$$\ball_0(p,\eps/\mu_0)\subseteq\ball(p,\eps)\subseteq \ball_0(p,\eps/\lambda_0)\subseteq U,$$
for $\lambda_0,\mu_0>0$, where $\ball_0(p,\cdot)$ stands for the Euclidean ball using the distance $\dis_0$ in local coordinates.

As such,
$$\vol\left(\ball_0(p,\eps/\mu_0)\right)\leq \vol\left(\ball(p,\eps)\right)\leq \vol\left(\ball_0(p,\eps/\lambda_0)\right).$$
Furthermore, $\lambda_0,\mu_0\rightarrow 1$ as $\eps\rightarrow 0$ because $\g_p=\g_0$.\\
Denote
$$c=\inf\{\deter(\G(q)),q\in\ball_0(p,\eps/\mu_0)\},$$
$$C=\sup\{\deter(\G(q)),q\in\ball_0(p,\eps/\lambda_0)\},$$
where $\G=\left[\g(\partial_i,\partial_j)\right]_{i,j}$.
Since $\deter(\G(p))=1$ (as $\g_p=\g_0$) and $\G$ is smooth, we have $c,C\rightarrow 1$ as $\eps\rightarrow 0$.

Now, denoting the Lebesgue measure in $\R^d$ as $l$, we have
\begin{itemize}
\item One one hand: \begin{align*}
\vol\left(\ball_0(p,\eps/\mu_0)\right)&=\int_{\{x\in\R^d,\,\Vert x\Vert\leq \eps/\mu_0\}}\sqrt{\deter(\G)}\circ\varphi^{-1}\,dl\\
&\geq \sqrt{c}\int_{\{x\in\R^d,\,\Vert x\Vert\leq \eps/\mu_0\}}\,dl\\
&\geq \sqrt{c}\cdot\alpha_d\cdot \left(\frac{\eps}{\mu_0}\right)^d
\end{align*}

\item On the other hand:\begin{align*}
\vol\left(\ball_0(p,\eps/\lambda_0)\right)&=\int_{\{x\in\R^d,\,\Vert x\Vert\leq \eps/\lambda_0\}}\sqrt{\deter(\G)}\circ\varphi^{-1}\,dl\\
&\leq \sqrt{C}\int_{\{x\in\R^d,\,\Vert x\Vert\leq \eps/\lambda_0\}}\,dl\\
&\leq \sqrt{C}\cdot \alpha_d\cdot \left(\frac{\eps}{\lambda_0}\right)^d
\end{align*}
\end{itemize}

As such,
$$\vol\left(\ball(p,\eps)\right)\underset{\eps\rightarrow 0}{\sim} \alpha_d\cdot\eps^d$$

\subsection{Proof of Proposition~\ref{prop:grad}}
The coordinate vector fields are a basis of the tangent spaces $\tangent_p\man$ for all $p\in U$. As such, in $U$, the gradient $\nabla f$ can be expressed as a linear combination:
$$\nabla f=\sum_{i=1}^d a_i\cdot\partial_i.$$
Now, we know that for all $i$:
$$\g\left(\nabla f,\partial_i\right)=df(\partial_i)=\frac{\partial f}{\partial x_i},$$
and as such
$$\sum_{j=1}^d a_j\cdot \g\left(\partial_j,\partial_i\right)=\frac{\partial f}{\partial x_i}.$$
This gives
$$(a_1,...,a_d)\cdot\left[\g(\partial_i,\partial_j)\right]_{i,j}=\left(\frac{\partial f}{\partial x_1},...,\frac{\partial f}{\partial x_d}\right),$$
and
$$(a_1,...,a_d)=\left(\frac{\partial f}{\partial x_1},...,\frac{\partial f}{\partial x_d}\right)\cdot\left[\g(\partial_i,\partial_j)\right]_{i,j}^{-1}.$$

Furthermore,
\begin{align*}
g\left(\nabla f,\nabla f\right)&=(a_1,...,a_d)\cdot\left[\g(\partial_i,\partial_j)\right]_{i,j}\cdot(a_1,...,a_d)^T\\
&=\left(\frac{\partial f}{\partial x_1},...,\frac{\partial f}{\partial x_d}\right)\cdot\left[\g(\partial_i,\partial_j)\right]_{i,j}^{-1}\cdot\left[\g(\partial_i,\partial_j)\right]_{i,j}\cdot \left[\g(\partial_i,\partial_j)\right]_{i,j}^{-1}\cdot \left(\frac{\partial f}{\partial x_1},...,\frac{\partial f}{\partial x_d}\right)^T\\
&=\left(\frac{\partial f}{\partial x_1},...,\frac{\partial f}{\partial x_d}\right)\cdot \left[\g(\partial_i,\partial_j)\right]_{i,j}^{-1}\cdot \left(\frac{\partial f}{\partial x_1},...,\frac{\partial f}{\partial x_d}\right)^T.
\end{align*}
As such,
$$\frac{1}{\mu(p)}\cdot \sqrt{\sum_{i=1}^d\frac{\partial f(p)}{\partial x_i}^2}\leq \sqrt{\g\left(\nabla f(p),\nabla f(p)\right)}\leq \frac{1}{\lambda(p)}\cdot \sqrt{\sum_{i=1}^d\frac{\partial f(p)}{\partial x_i}^2}.$$

\printbibliography

@software{Oulhaj_Mapper_Gromov-Wasserstein_distance,
author = {Oulhaj, Ziyad},
title = {{Mapper Gromov-Wasserstein distance examples}},
url = {https://github.com/ZiyadOulhaj/Mapper-MMS}
}

@inproceedings{Naitzat2018,
author = {Naitzat, Gregory and Lokare, Namita and Silva, Jorge and Kaynar-Kabul, Ilknur},
booktitle = {KDD Workshop on Interactive Data Exploration and Analytics},
title = {{M-Boost: profiling and refining deep neural networks with topological data analysis}},
year = {2018}
}

@inproceedings{Bruel-Gabrielsson2018,
author = {Br{\"{u}}el-Gabrielsson, Rickard and Carlsson, Gunnar},
booktitle = {CoRR},
publisher = {arXiv:1810.03234},
title = {{Exposition and interpretation of the topology of neural networks}},
year = {2018}
}

@incollection{joseph2021topological,
  title={Topological Data Analysis in Conjunction with Traditional Machine Learning Techniques to Predict Future MDAP PM Ratings},
  author={Joseph, Brian B and Pham, Trami and Hastings, Christopher},
  year={2021},
  publisher={Acquisition Research Program}
}

@INPROCEEDINGS{Mitra21,
  author={Mitra, Satanik and Rao JV, Kameshwar},
  booktitle={2021 IEEE International Conference on Quantum Computing and Engineering (QCE)}, 
  title={Experiments on Fraud Detection use case with QML and TDA Mapper}, 
  year={2021},
  volume={},
  number={},
  pages={471-472},
  keywords={Quantum computing;Data analysis;Computational modeling;Image processing;Neural networks;Noise reduction;Data visualization;QML;QNN;TDA Mapper;StrawberryFields;Fraud Detection;Classification},
  doi={10.1109/QCE52317.2021.00083}}

@article{zechel2014topographical,
  title={Topographical transcriptome mapping of the mouse medial ganglionic eminence by spatially resolved RNA-seq},
  author={Zechel, Sabrina and Zajac, Pawel and L{\"o}nnerberg, Peter and Ib{\'a}{\~n}ez, Carlos F and Linnarsson, Sten},
  journal={Genome biology},
  volume={15},
  pages={1--12},
  year={2014},
  publisher={Springer}
}

@inproceedings{wang2018topological,
  title={Topological methods for visualization and analysis of high dimensional single-cell RNA sequencing data},
  author={Wang, Tongxin and Johnson, Travis and Zhang, Jie and Huang, Kun},
  booktitle={BIOCOMPUTING 2019: Proceedings of the Pacific Symposium},
  pages={350--361},
  year={2018},
  organization={World Scientific}
}

@inproceedings{wang2020exploration,
  title={Exploration of Topological Data Analysis In 3D Printing},
  author={Wang, Ziqi},
  booktitle={2020 International Conference on Information Science, Parallel and Distributed Systems (ISPDS)},
  pages={150--153},
  year={2020},
  organization={IEEE}
}

@article{rosen2018inferring,
  title={Inferring quality in point cloud-based 3D printed objects using topological data analysis},
  author={Rosen, Paul and Hajij, Mustafa and Tu, Junyi and Arafin, Tanvirul and Piegl, Les},
  journal={arXiv preprint arXiv:1807.02921},
  year={2018}
}

@article{xu2019scalable,
  title={Scalable Gromov-Wasserstein learning for graph partitioning and matching},
  author={Xu, Hongteng and Luo, Dixin and Carin, Lawrence},
  journal={Advances in neural information processing systems},
  volume={32},
  year={2019}
}

@inproceedings{xu2019gromov,
  title={Gromov-wasserstein learning for graph matching and node embedding},
  author={Xu, Hongteng and Luo, Dixin and Zha, Hongyuan and Duke, Lawrence Carin},
  booktitle={International conference on machine learning},
  pages={6932--6941},
  year={2019},
  organization={PMLR}
}

@article{koehl2023computing,
  title={Computing the Gromov-Wasserstein distance between two surface meshes using optimal transport},
  author={Koehl, Patrice and Delarue, Marc and Orland, Henri},
  journal={Algorithms},
  volume={16},
  number={3},
  pages={131},
  year={2023},
  publisher={MDPI}
}

@article{han2023covariance,
  title={Covariance alignment: from maximum likelihood estimation to Gromov-Wasserstein},
  author={Han, Yanjun and Rigollet, Philippe and Stepaniants, George},
  journal={arXiv preprint arXiv:2311.13595},
  year={2023}
}

@article{rioux2024limit,
  title={Limit Laws for Gromov-Wasserstein Alignment with Applications to Testing Graph Isomorphisms},
  author={Rioux, Gabriel and Goldfeld, Ziv and Kato, Kengo},
  journal={arXiv preprint arXiv:2410.18006},
  year={2024}
}

@article{zhang2024gromov,
  title={Gromov--Wasserstein distances: Entropic regularization, duality and sample complexity},
  author={Zhang, Zhengxin and Goldfeld, Ziv and Mroueh, Youssef and Sriperumbudur, Bharath K},
  journal={The Annals of Statistics},
  volume={52},
  number={4},
  pages={1616--1645},
  year={2024},
  publisher={Institute of Mathematical Statistics}
}

@article{carriere2022statistical,
  title={Statistical analysis of Mapper for stochastic and multivariate filters},
  author={Carri{\`e}re, Mathieu and Michel, Bertrand},
  journal={Journal of Applied and Computational Topology},
  volume={6},
  number={3},
  pages={331--369},
  year={2022},
  publisher={Springer}
}

@article{ruscitti2024improving,
  title={Improving Mapper's Robustness by Varying Resolution According to Lens-Space Density},
  author={Ruscitti, Kaleb D and McInnes, Leland},
  journal={arXiv preprint arXiv:2410.03862},
  year={2024}
}

@article{brown2021probabilistic,
  title={Probabilistic convergence and stability of random mapper graphs},
  author={Brown, Adam and Bobrowski, Omer and Munch, Elizabeth and Wang, Bei},
  journal={Journal of Applied and Computational Topology},
  volume={5},
  number={1},
  pages={99--140},
  year={2021},
  publisher={Springer}
}

@inproceedings{bauer_measuring_2014,
	title = {Measuring {Distance} between {Reeb} {Graphs}},
	isbn = {978-1-4503-2594-3},
	doi = {10.1145/2582112.2582169},
	booktitle = {Proceedings of the 30th {Annual} {Symposium} on {Computational} {Geometry} ({SoCG} 2014)},
	publisher = {Association for Computing Machinery},
	author = {Bauer, Ulrich and Ge, Xiaoyin and Wang, Yusu},
	year = {2014},
	pages = {464--473},
}

@article{bauer_reeb_2021,
	title = {The {Reeb} {Graph} {Edit} {Distance} is {Universal}},
	volume = {21},
	issn = {1615-3383},
	doi = {10.1007/s10208-020-09488-3},
	number = {5},
	journal = {Foundations of Computational Mathematics (FoCM)},
	author = {Bauer, Ulrich and Landi, Claudia and Mémoli, Facundo},
	year = {2021},
	pages = {1441--1464},
}

@inproceedings{bauer_edit_2016,
	title = {An {Edit} {Distance} for {Reeb} {Graphs}},
	isbn = {978-3-03868-004-8},
	url = {https://doi.org/10.2312/3dor.20161084},
	doi = {10.2312/3dor.20161084},
	booktitle = {Eurographics {Workshop} on {3D} {Object} {Retrieval} ({3DOR} 2016)},
	publisher = {The Eurographics Association},
	author = {Bauer, Ulrich and Fabio, Barbara Di and Landi, Claudia},
	editor = {Ferreira, Alfredo and Giachetti, Andrea and Giorgi, Daniela},
	year = {2016},
	note = {ISSN: 1997-0471},
}

@inproceedings{wangMeasureTheoreticReebGraphs2024,
	title = {Measure-{{Theoretic Reeb Graphs}} and {{Reeb Spaces}}},
	booktitle = {40th {{International Symposium}} on {{Computational Geometry}} ({{SoCG}} 2024)},
	author = {Wang, Qingsong and Ma, Guanqun and Sridharamurthy, Raghavendra and Wang, Bei},
	editor = {Mulzer, Wolfgang and Phillips, Jeff M.},
	year = {2024},
	volume = {293},
	pages = {80:1--80:18},
	publisher = {Schloss Dagstuhl -- Leibniz-Zentrum f{\"u}r Informatik},
	issn = {1868-8969},
	doi = {10.4230/LIPIcs.SoCG.2024.80},
	isbn = {978-3-95977-316-4},
}

@inproceedings{munch_convergence_2016,
	series = {Leibniz {International} {Proceedings} in {Informatics} ({LIPIcs})},
	title = {Convergence between {Categorical} {Representations} of {Reeb} {Space} and {Mapper}},
	volume = {51},
	isbn = {978-3-95977-009-5},
	doi = {10.4230/LIPIcs.SoCG.2016.53},
	booktitle = {32nd {International} {Symposium} on {Computational} {Geometry} ({SoCG} 2016)},
	publisher = {Schloss Dagstuhl – Leibniz-Zentrum für Informatik},
	author = {Munch, Elizabeth and Wang, Bei},
	editor = {Fekete, Sándor and Lubiw, Anna},
	year = {2016},
	pages = {53:1--53:16},
}

@book{sakai1996riemannian,
  title={Riemannian geometry},
  author={Sakai, Takashi},
  volume={149},
  year={1996},
  publisher={American Mathematical Soc.}
}

@article{de2016categorified,
  title={Categorified reeb graphs},
  author={De Silva, Vin and Munch, Elizabeth and Patel, Amit},
  journal={Discrete \& Computational Geometry},
  volume={55},
  number={4},
  pages={854--906},
  year={2016},
  publisher={Springer}
}

@article{burgisser2018computing,
  title={Computing the homology of basic semialgebraic sets in weak exponential time},
  author={B{\"u}rgisser, Peter and Cucker, Felipe and Lairez, Pierre},
  journal={Journal of the ACM (JACM)},
  volume={66},
  number={1},
  pages={1--30},
  year={2018},
  publisher={ACM New York, NY, USA}
}

@article{aamari2019estimating,
  title={Estimating the Reach of a Manifold},
  author={Aamari, Eddie and Kim, Jisu and Chazal, Fr{\'e}d{\'e}ric and Michel, Bertrand and Rinaldo, Alessandro and Wasserman, Larry},
  journal={Electronic Journal of Statistics},
  year={2019}
}

@article{flamary2021pot,
  author  = {R{\'e}mi Flamary and Nicolas Courty and Alexandre Gramfort and Mokhtar Z. Alaya and Aur{\'e}lie Boisbunon and Stanislas Chambon and Laetitia Chapel and Adrien Corenflos and Kilian Fatras and Nemo Fournier and L{\'e}o Gautheron and Nathalie T.H. Gayraud and Hicham Janati and Alain Rakotomamonjy and Ievgen Redko and Antoine Rolet and Antony Schutz and Vivien Seguy and Danica J. Sutherland and Romain Tavenard and Alexander Tong and Titouan Vayer},
  title   = {POT: Python Optimal Transport},
  journal = {Journal of Machine Learning Research},
  year    = {2021},
  volume  = {22},
  number  = {78},
  pages   = {1-8},
  url     = {http://jmlr.org/papers/v22/20-451.html}
}

@inproceedings{zhang2023duality,
  title={Duality and Sample Complexity for the Gromov-Wasserstein Distance},
  author={Zhang, Zhengxin and Goldfeld, Ziv and Mroueh, Youssef and Sriperumbudur, Bharath},
  booktitle={NeurIPS 2023 Workshop Optimal Transport and Machine Learning},
  year={2023}
}

@inproceedings{memoli2009spectral,
  title={Spectral Gromov-Wasserstein distances for shape matching},
  author={M{\'e}moli, Facundo},
  booktitle={2009 IEEE 12th International Conference on Computer Vision Workshops, ICCV Workshops},
  pages={256--263},
  year={2009},
  organization={IEEE}
}

@article{memoli2011gromov,
  title={Gromov--Wasserstein distances and the metric approach to object matching},
  author={M{\'e}moli, Facundo},
  journal={Foundations of computational mathematics},
  volume={11},
  pages={417--487},
  year={2011},
  publisher={Springer}
}

@article{niyogi2008finding,
  title={Finding the homology of submanifolds with high confidence from random samples},
  author={Niyogi, Partha and Smale, Stephen and Weinberger, Shmuel},
  journal={Discrete \& Computational Geometry},
  volume={39},
  pages={419--441},
  year={2008},
  publisher={Springer}
}

@article{carriere2018structure,
  title={Structure and stability of the one-dimensional mapper},
  author={Carriere, Mathieu and Oudot, Steve},
  journal={Foundations of Computational Mathematics},
  volume={18},
  pages={1333--1396},
  year={2018},
  publisher={Springer}
}

@article{arias2011spectral,
  title={Spectral clustering based on local linear approximations},
  author={Arias-Castro, Ery and Chen, Guangliang and Lerman, Gilad},
  journal={Electronic Journal of Statistics},
  volume={5},
  pages={1537--1587},
  year={2011},
  publisher={Institute of Mathematical Statistics}
}

@article{weed2019sharp,
  title={Sharp asymptotic and finite-sample rates of convergence of empirical measures in Wasserstein distance},
  author={Weed, Jonathan and Bach, Francis},
  journal={Bernoulli},
  volume={25},
  number={4A},
  pages={2620--2648},
  year={2019},
  publisher={JSTOR}
}

@article{BoissardLeGouic2014,
author = {Emmanuel Boissard and Thibaut Le Gouic},
title = {{On the mean speed of convergence of empirical and occupation measures in Wasserstein distance}},
volume = {50},
journal = {Annales de l'Institut Henri Poincaré, Probabilités et Statistiques},
number = {2},
publisher = {Institut Henri Poincaré},
pages = {539 -- 563},
keywords = {Functional quantization, Markov chains, measure theory, Optimal transportation, Transportation inequalities, Wasserstein metrics},
year = {2014},
doi = {10.1214/12-AIHP517},
URL = {https://doi.org/10.1214/12-AIHP517}
}

@article{sturm2006geometry,
  title={On the geometry of metric measure spaces},
  author={Sturm, Karl-Theodor},
  journal={Acta Mathematica},
  volume={196},
  number={1},
  pages={65--131},
  year={2006},
  publisher={International Press of Boston}
}

@book{boothby1986introduction,
  title={An introduction to differentiable manifolds and Riemannian geometry},
  author={Boothby, William M},
  year={1986},
  publisher={Academic press}
}

@book{milnor1963morse,
  title={Morse theory},
  author={Milnor, John Willard},
  number={51},
  year={1963},
  publisher={Princeton university press}
}

@article{henrikson1999completeness,
  title={Completeness and total boundedness of the Hausdorff metric},
  author={Henrikson, Jeff},
  journal={MIT Undergraduate Journal of Mathematics},
  volume={1},
  number={69-80},
  pages={10},
  year={1999},
  publisher={Citeseer}
}

@article{singh2007topological,
  title={Topological methods for the analysis of high dimensional data sets and 3d object recognition.},
  author={Singh, Gurjeet and M{\'e}moli, Facundo and Carlsson, Gunnar E and others},
  journal={PBG@ Eurographics},
  volume={2},
  pages={091--100},
  year={2007}
}

@article{petersen2006riemannian,
  title={Riemannian geometry},
  author={Petersen, P},
  journal={Graduate Texts in Mathematics/Springer-Verlarg},
  year={2006}
}

@inproceedings{chazal2014convergence,
  title={Convergence rates for persistence diagram estimation in topological data analysis},
  author={Chazal, Fr{\'e}d{\'e}ric and Glisse, Marc and Labru{\`e}re, Catherine and Michel, Bertrand},
  booktitle={International Conference on Machine Learning},
  pages={163--171},
  year={2014},
  organization={PMLR}
}

@article{carriere2018statistical,
  title={Statistical analysis and parameter selection for mapper},
  author={Carriere, Mathieu and Michel, Bertrand and Oudot, Steve},
  journal={Journal of Machine Learning Research},
  volume={19},
  number={12},
  pages={1--39},
  year={2018}
}

@article{cuevas2009set,
  title={Set estimation: Another bridge between statistics and geometry},
  author={Cuevas, Antonio},
  journal={Bol. Estad. Investig. Oper},
  volume={25},
  number={2},
  pages={71--85},
  year={2009}
}
\end{document}